\newtheorem{thm}{Theorem}[section]
\newtheorem{cor}[thm]{Corollary}
\newtheorem{prop}[thm]{Proposition}
\newtheorem{lemma}[thm]{Lemma}
\theoremstyle{definition}
\newtheorem{remark}[thm]{Remark}
\newtheorem{example}[thm]{Example}
\DeclareMathOperator{\Sym}{Sym}
\DeclareMathOperator{\Pic}{Pic}
\DeclareMathOperator{\Aut}{Aut}
\DeclareMathOperator{\Fix}{Fix}
\DeclareMathOperator{\rank}{rank}
\DeclareMathOperator{\sing}{sing}
\DeclareMathOperator{\Fol}{Fol}
\DeclareMathOperator{\Aff}{Aff}
\DeclareMathOperator{\Rat}{Rat}
\DeclareMathOperator{\Log}{Log}
\DeclareMathOperator{\codim}{codim}
\DeclareMathOperator{\Gr}{Gr}
\DeclareMathOperator{\PBL}{LPB}
\def\C{\mathbb C}
\def\P{\mathbb P}
\def\Z{\mathbb Z}
\def\F{\mathcal F}
\def\G{\mathcal G}
\begin{document}

\title[Foliations on Fano $3$-folds]
{Foliations with trivial canonical bundle on Fano $3$-folds}
\author[F. Loray, J.V. Pereira and F. Touzet ]
{Frank LORAY$^1$, Jorge Vit\'{o}rio PEREIRA$^{2}$ and Fr\'ed\'eric TOUZET$^1$}
\address{\newline $1$ IRMAR, Campus de Beaulieu, 35042 Rennes Cedex, France\hfill\break
$2$ IMPA, Estrada Dona Castorina, 110, Horto, Rio de Janeiro,
Brasil} \email{$^1$ frank.loray@univ-rennes1.fr, frederic.touzet@univ-rennes1.fr}

\subjclass{} \keywords{holomorphic foliation, holomorphic Poisson structure}

\maketitle

\begin{abstract}
We classify the irreducible components of the space of foliations on Fano $3$-folds with rank one Picard group.
As a corollary we obtain a classification of holomorphic Poisson structures on the same class of $3$-folds.
\end{abstract}
\setcounter{tocdepth}{1}
 \sloppy

\section{Introduction}
Let $X$ be a  projective manifold and $N$ be a line bundle on it. A holomorphic $1$-form with coefficients in $N$ defines a
codimension one foliation $\mathcal F$ if and only if it satisfies the Frobenius integrability condition
$\omega \wedge d \omega = 0$  in $H^0(X, \Omega^3_X \otimes N^{\otimes 2}).$
If this is the case and $\omega$  has zeros of codimension at least two then $N$ is called the normal bundle of $\F$. For a fixed line bundle
$N$ on a fixed projective manifold $X$, it is natural to study the irreducible components of the variety
\[
\textup{Fol}(X,N) = \left\{ [\omega] \in \mathbb P H^0(X, \Omega^1_X \otimes N) \, \, \big\vert \, \,  \omega \wedge d \omega =0 ; \codim \sing(\omega) \ge 2 \right\}
\]
which we call the space of codimension one foliations on $X$ with normal bundle $N$.

If $X$ has dimension two then the integrability condition is automatically satisfied and the space of foliations with a given normal bundle $N$
is either empty or has only one irreducible component which is an open subset of the projective space $\mathbb P H^0(X, \Omega^1_X \otimes N)$.  The discussion from now one will focus on projective manifolds of dimension at least three.
When $X= \mathbb P^n$ the normal bundle of a codimension one foliation $\mathcal F$ is $\mathcal O_{\mathbb P^n}(d+2)$ where $d$ is the degree of the foliation
defined as the number of tangencies $\mathcal F$ with a general line. The irreducible components of $\Fol(\mathbb P^n, d) = \Fol(\mathbb P^n,\mathcal O_{\mathbb P^n}(d+2))$
for $d = 0$ and $d=1$ are described by Jouanolou in \cite{EPA} using elementary methods. In the celebrated work \cite{CerveauLinsNetoAnnals}, Cerveau and Lins Neto give a
complete description of the irreducible components of $\Fol(\mathbb P^n, 2)= \Fol(\mathbb P^n, \mathcal O_{\mathbb P^n}(4))$, $n \ge 3$. The methods are considerably more involved and rely on the study of the Gauss map
of the foliations, Dulac's classification of centers of degree 2 polynomial planar vector fields \cite{Dulac}, and computer-assisted calculations.

The canonical bundle of a foliation $\mathcal F$ with normal bundle $N $ can be defined as $K\mathcal F= K  X\otimes N^*$, where $KX$ is the canonical bundle of the ambient
manifold. Notice that in $\mathbb P^n$ the foliations with trivial canonical bundle are precisely those of degree $n-1$. In the particular case of $\mathbb P^3$, Cerveau-Lins Neto classification  is  the classification of the irreducible components of the space of foliations on $\mathbb P^3$ with trivial canonical bundle.
The main purpose of this paper is to extend this classification to the other Fano $3$-folds ($3$-folds with ample anticanonical bundle) having Picard group isomorphic to $\mathbb Z$.  Our main result and Cerveau-Lins Neto
 classification of foliations of degree two on $\mathbb P^3$ are summarized in Table \ref{Table:1}. For more precise statements see Theorems \ref{T:Quadricas}, \ref{T:CLN}, \ref{T:i2}, and \ref{T:i1}.  In the table $\Rat$, $\Log$, $\PBL$, and $\Aff$ stand for rational, logarithmic, linear pull-back, and affine respectively. Definitions of $\Rat$ and $\Log$ can be found in  Section \ref{S:parametros}; definition of $\PBL$ is in  Section \ref{S:P3}; and the irreducible components that go under the label   $\Aff$ are described in  Example \ref{E:affQ} and in the proofs of Theorems \ref{T:i1} and  \ref{T:i2}. As a corollary we obtain a classification of holomorphic Poisson structures
 on Fano $3$-folds with rank one Picard group, see Section \ref{S:Poisson}.

\begin{center}

\begin{table}[ht]
\begin{tabular}{|c|c|c|c|}
\hline
{\bf Manifold} & {\bf Irreducible component} & {\bf dimension} \\
\hline

\multirow{6}{*}{Projective space $\P^3$}
& $\Rat(1,3)$& $21 $ \\    \cline{2-3}
& $\Rat(2,2)$& $16$ \\    \cline{2-3}
& $\Log(1,1,1,1)$  &  $14$ \\   \cline{2-3}
  &$\Log(1,1,2)$ & $17$\\    \cline{2-3}
  & $\PBL(2)$ & $17$\\    \cline{2-3}
  & $\Aff$ & $13$ \\    \cline{2-3}
\hline\hline
  \multirow{3}{*}{Hyperquadric $Q^3$}
  & $\Rat(1,2)$ & $17$ \\    \cline{2-3}
  & $\Log(1,1,1)$  & $14 $ \\    \cline{2-3}
  & $\Aff$ & $8$ \\    \cline{2-3}
\hline\hline
  Hypersurface of degree $6$
in   $\mathbb P(1,1,1,2,3)$&  $\Rat(1,1)    $ & $2$ \\
\hline  \hline
Hypersurface  of degree $4$
 in $\mathbb P(1,1,1,1,2)$ &  $\Rat(1,1) $ & $ 4$ \\
 \hline  \hline
  Cubic in $\mathbb P^4$ &  $\Rat(1,1)  $ & $6$ \\
  \hline  \hline
  Intersection of quadrics in $\mathbb P^5$  &  $\Rat(1,1)  $ & $8$ \\
  \hline  \hline
  \multirow{2}{*}{$X_5$} &  $\Rat(1,1) $ & $10$ \\
  \cline{2-3}
                  &$ \Aff  $ & $1$
      \\          \hline
\hline
        Mukai-Umemura $3$-fold  &  $\Aff  $ & $1$ \\
        \hline
\end{tabular}
\caption{Irreducible components of the space of foliations with $K\F=0$ on Fano $3$-folds with rank one Picard group.} \label{Table:1}
\end{table}
\end{center}

Our main technical tool is the following result obtained by combining Theorem 3.5 and Theorem 3.8 of  \cite{LPT}.

\begin{thm}\label{T:truquesujo}
Let $(X,H)$ be a polarized complex projective  manifold of dimension $n$ and $\mathcal F$ be a codimension one foliation on $X$ with  numerically trivial
canonical bundle and semi-stable tangent sheaf. Suppose  $c_1(TX)^2 \cdot H^{n-2} >0$.
Then at least one of the following statements holds true:
\begin{enumerate}
\item $T\mathcal F$ is stable and  $\mathcal F$ is a  rationally connected foliation, i.e., the general leaf of $\mathcal F$ is a  rationally connected
algebraic variety;
\item $T\mathcal F$ is strictly semi-stable and there is a rationally  connected  foliation $\mathcal H$ tangent to $\mathcal F$ and with $K\mathcal H \cdot H^{n-1}=0$; or
\item $\mathcal F$ is defined by a closed rational $1$-form with coefficients in a flat line bundle and without divisorial components in its zero set.
\end{enumerate}
\end{thm}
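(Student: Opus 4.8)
The plan is to derive the statement from the classification of foliations with numerically trivial canonical class in \cite{LPT}, using the positivity hypothesis to narrow that classification down to the three alternatives above. First I would record the numerical consequences of the hypotheses. Since $K\mathcal F\equiv 0$ we have $c_1(T\mathcal F)\equiv 0$, hence $\mu_H(T\mathcal F)=0$; and from the defining sequence $0\to T\mathcal F\to TX\to N_{\mathcal F}\to 0$, exact off the codimension $\ge 2$ set $\sing\mathcal F$, we get $c_1(N_{\mathcal F})\equiv c_1(TX)$, so the hypothesis reads $c_1(N_{\mathcal F})^2\cdot H^{n-2}>0$. By the Hodge index theorem for the pairing $(\alpha,\beta)\mapsto\alpha\cdot\beta\cdot H^{n-2}$ on $N^1(X)$ this forces $c_1(TX)\cdot H^{n-1}\neq 0$, so that either $-K_X$ or $K_X$ lies in the positive cone of $(X,H)$, a distinction I would carry along. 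I would also recall that $\mathcal F$ admits a unique maximal algebraically integrable subfoliation $\mathcal F^{\mathrm{alg}}\subseteq\mathcal F$, whose leaves are the Zariski closures of the general leaves of $\mathcal F$, and separate the cases $\mathcal F^{\mathrm{alg}}=\mathcal F$, $0\subsetneq\mathcal F^{\mathrm{alg}}\subsetneq\mathcal F$, and $\mathcal F^{\mathrm{alg}}=0$.

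The argument then runs along two regimes. If $T\mathcal F$ is pseudo-effective, then, being of slope $0$, it is numerically flat, $\mathcal F$ is non-uniruled, and one is exactly in the setting of Theorem 3.8 of \cite{LPT}; feeding $c_1(TX)^2\cdot H^{n-2}>0$ into its conclusion to discard the transversely hyperbolic and torsion-normal-bundle subcases leaves precisely alternative (3). If $T\mathcal F$ is not pseudo-effective, then $\mathcal F$ is uniruled, and Theorem 3.5 of \cite{LPT}, i.e. the foliated Bogomolov--McQuillan/Campana--Pereira theorem, produces an algebraically integrable subfoliation $\mathcal H\subseteq\mathcal F$ with rationally connected leaves, the positivity ruling out Calabi--Yau leaves. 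Here semistability of $T\mathcal F$ is what separates alternatives (1) and (2): if $T\mathcal F$ is stable, then $T\mathcal H\subseteq T\mathcal F$ with $\mu_H(T\mathcal H)=0=\mu_H(T\mathcal F)$ forces $\mathcal H=\mathcal F$, so $\mathcal F$ is itself rationally connected --- alternative (1); if $T\mathcal F$ is strictly semistable, one keeps $\mathcal H$ (or enlarges it to $\mathcal F^{\mathrm{alg}}$) and notes $\mu_H(T\mathcal H)=0$, that is $K\mathcal H\cdot H^{n-1}=0$ --- alternative (2). The residual configurations --- a strictly-semistable $T\mathcal F$ whose $\mathcal F^{\mathrm{alg}}$ has non-rationally-connected leaves, or $0\subsetneq\mathcal F^{\mathrm{alg}}\subsetneq\mathcal F$ with $T\mathcal F$ stable --- I would dispatch either by the positivity hypothesis or by observing that the transverse structure on the quotient puts $\mathcal F$ back into alternative (3).

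The main obstacle is that essentially all the content lives inside the two cited theorems of \cite{LPT}, and reproving either from scratch is serious work. Theorem 3.8 rests on a Corlette--Simpson-type analysis of the transverse holonomy (in the spirit of Brunella--Mendes and Touzet), and it is there that the delicate features of alternative (3) are obtained --- flatness of the line bundle of coefficients and absence of divisorial components in the zero set; Theorem 3.5 rests on the foliated Bogomolov--McQuillan theory for uniruled foliations together with the bound $K\mathcal H\cdot H^{n-1}=0$. I therefore expect the proof itself to be short: check that $(X,H,\mathcal F)$ satisfies the hypotheses of Theorems 3.5 and 3.8 of \cite{LPT}, use $c_1(TX)^2\cdot H^{n-2}>0$ and the semistability of $T\mathcal F$ to eliminate every case of their conclusions other than the three appearing in the statement, and translate.
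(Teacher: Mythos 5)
Your proposal matches the paper's treatment: the theorem is stated there as being "obtained by combining Theorem 3.5 and Theorem 3.8 of \cite{LPT}", with no further argument given, and your plan is precisely to invoke those two results, use the positivity hypothesis $c_1(TX)^2\cdot H^{n-2}>0$ to eliminate the remaining cases of their conclusions, and use (semi)stability of $T\mathcal F$ to separate alternatives (1) and (2). This is essentially the same approach as the paper.
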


Indeed we will show that when $\Pic(X)=\Z$,  statement (1) implies statement (3). This will be achieved   through a study of fibers of rational maps $F: X \dashrightarrow \mathbb P^1$, which seems
to have some independent interest.

\subsection{Number of reducible fibers of first integrals}
Let $\F$ be a codimension one foliation on a complex projective manifold $X$
defined by the levels of a rational  map $F: X \dashrightarrow C$ from $X$ to some
algebraic curve $C$. If we further assume that $F$ has irreducible general fiber (what can always be done
after replacing $F$ by its Stein factorization) and,   following \cite{Vistoli}, define  its  base number as
\[
r(\mathcal F) = r(F) = \sum_{x \in C} \left( \# \{ \text{ irreducible components of } F^{-1}(x)\} -1 \right) \,  .
\]
then we obtain  a rather strong bound on $r(\F)$ under the additional assumption that  $T\F$ is  stable/semi-stable and has  zero/positive first Chern class.

\begin{thm}\label{TI:A} Let $\F$ be  a codimension one foliation on a polarized complex projective manifold $(X,H)$ of dimension $n \ge 3$.
If $T \mathcal F$ is $H$-semi-stable and $K \F\cdot H^{n-1} < 0$, or   $T \mathcal F$ is $H$-stable and
$K \F \cdot H^{n-1} = 0$,  then
\[
r(\mathcal F) \le \rank NS(X) - 1 \, ,
\]
where $NS(X)$ is the Neron-Severi group of $X$. In particular, if $X = \mathbb P^n$, $n\ge 3$, then $r(\mathcal F)=0$.
\end{thm}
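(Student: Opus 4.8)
The plan is to produce, from the reducible fibres of $F$, a subspace of $NS(X)\otimes\mathbb Q$ of dimension $1+r(\mathcal F)$, using a Zariski-type negativity for the components of a single fibre and using the hypothesis on $\mathcal F$ only to rule out the parasitic numerical relations that the indeterminacy of $F$ might otherwise introduce. After replacing $F$ by its Stein factorization we may assume $F\colon X\dashrightarrow C$ has irreducible general fibre. Let $f\in NS(X)$ be the class of a general fibre, let $x_1,\dots,x_k$ be the points over which the fibre is reducible, and let $F^{-1}(x_j)=\sum_i m_{j,i}D_{j,i}$ be the decomposition into $n_j\ge 2$ distinct irreducible components, so that $\sum_i m_{j,i}[D_{j,i}]=f$ in $NS(X)$ for every $j$. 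Since $[D_{j,n_j}]$ is a rational combination of $f$ and the $[D_{j,i}]$ with $i<n_j$, it suffices to prove that the $1+\sum_j(n_j-1)=1+r(\mathcal F)$ classes $f$ and $\{[D_{j,i}]:i<n_j,\ 1\le j\le k\}$ are linearly independent in $NS(X)\otimes\mathbb Q$.

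The source of independence is Zariski's lemma. Passing to a general surface section $S=H_1\cap\cdots\cap H_{n-2}$ — here the only use of the hypothesis $n\ge 3$, since then the general fibre of $F$ has dimension $\ge 2$, hence $F|_S$ still has irreducible general fibre and $r(F|_S)=r(\mathcal F)$ — and recalling $NS(X)\hookrightarrow NS(S)$ by the Lefschetz hyperplane theorem, one is reduced to a statement about a fibred surface. On (a resolution of) $S$ the intersection form on the components of a fibre is negative semidefinite with one-dimensional radical generated by the fibre class, and components of distinct fibres are disjoint; a short linear-algebra argument from these facts gives the asserted independence of $\{f\}\cup\{[D_{j,i}]:i<n_j\}$ — as classes on $S$, hence on $X$ — whenever $F$ is a morphism, so that the surface section genuinely sees the fibre components. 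Thus for any fibration which is a morphism the bound $r(\mathcal F)\le\rank NS(X)-1$ holds with no hypothesis on $\mathcal F$ at all.

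When $F$ has an indeterminacy locus this argument only bounds $r(\mathcal F)$ by the Picard rank of a resolution, which is larger; equivalently, a priori the $[D_{j,i}]$ could satisfy on $X$ a numerical relation not implied by the fibre relations. Ruling this out is, I expect, the crux of the proof. The idea is that such a relation yields a non-trivial effective divisor supported on $\mathcal F$-invariant components of fibres of $F$ and not numerically proportional to a combination of whole fibres, from which one manufactures a proper sub-foliation $\mathcal G\subsetneq\mathcal F$, algebraically integrable with rationally connected leaves; by the theory of rationally connected foliations $K\mathcal G\cdot H^{n-1}<0$, and a slope computation using $c_1(T\mathcal F)=-K\mathcal F$ together with $K\mathcal F\cdot H^{n-1}\le 0$ gives $\mu_H(T\mathcal G)\ge\mu_H(T\mathcal F)$ — contradicting stability of $T\mathcal F$ when $K\mathcal F\cdot H^{n-1}=0$ and, after the relevant Chern class estimate for $\mathcal G$, semistability when $K\mathcal F\cdot H^{n-1}<0$. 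Once the parasitic relations are excluded the $(1+r(\mathcal F))$-dimensional subspace lives in $NS(X)\otimes\mathbb Q$ and $r(\mathcal F)\le\rank NS(X)-1$; the case $X=\mathbb P^n$ is the special case $\rank NS(X)=1$ (note that there $C=\mathbb P^1$, since $\mathbb P^n$ is rationally connected, and that the relevant $F$ is never a morphism, so the foliation hypothesis is genuinely needed). The hard point is entirely the construction and slope analysis of the destabilizing sub-foliation; everything before it is Bertini, the Lefschetz hyperplane theorem, and Zariski's lemma.
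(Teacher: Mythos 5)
Your reduction to a linear-dependence statement is essentially the paper's own starting point (the paper phrases it as: if $r(\F)\ge\rank NS(X)$ then some nonzero $\mathbb{Q}$-divisor $D$ with $c_1(D)=0$ is supported on an irreducible fibre $F_0$ together with the chosen partial components $F_1,\dots,F_{r(\F)}$), and your observation that Zariski's lemma on a general surface section settles the case of a morphism unconditionally is correct but is not where the content lies. The gap is exactly where you say you expect it to be, and your sketch of that step does not work. From the numerically trivial invariant divisor $D$ the paper does not ``manufacture an algebraically integrable sub-foliation with rationally connected leaves''; it applies a Ghys--Jouanolou type construction (Lemma 3.1 of the paper, with $m=0$): writing $\eta$ for the local logarithmic derivative $\sum\lambda_\alpha\, dh_\alpha/h_\alpha$ of $D$ and $\omega$ for the twisted $1$-form defining $\F$, the local $2$-forms $\eta\wedge\omega$ glue to a global section $\theta$ of $\Omega^2_X\otimes N\F$. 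If $\theta\neq 0$ it defines a codimension-two foliation $\G$ tangent to $\F$ with $\det N\G=\det N\F\otimes\mathcal O_X(-\Delta)$ for some effective $\Delta$, hence $c_1(T\G)\cdot H^{n-1}\ge c_1(T\F)\cdot H^{n-1}$; since $\rank T\G=n-2<n-1=\rank T\F$ this already violates semi-stability when $c_1(T\F)\cdot H^{n-1}>0$ and stability when it vanishes. No rational connectedness of $\G$ enters anywhere, and your assertion that ``by the theory of rationally connected foliations $K\G\cdot H^{n-1}<0$'' runs Miyaoka--Bogomolov--McQuillan backwards: negativity of the canonical class is the hypothesis of that theory, not its conclusion.

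Your sketch also omits the degenerate branch of the construction, which genuinely occurs and needs a separate argument: when $\theta\equiv 0$ the local forms $\eta$ glue to a global closed logarithmic $1$-form defining $\F$ with polar set equal to the support of $D$. This is precisely where the choice of omitting one component from each reducible fibre is used: since the general fibre of $F$ is irreducible, any closed rational $1$-form defining $\F$ is pulled back from the base curve, so its polar set is a union of \emph{entire} fibres; but $\mathrm{supp}(D)\subset F_0\cup\cdots\cup F_{r(\F)}$ contains no entire reducible fibre, and $D$ cannot be supported on $F_0$ alone because $c_1(D)=0$ while $F_0\cdot H^{n-1}>0$. Without a concrete construction of $\G$ and without this second case, the paragraph you label as the crux is a statement of intent rather than a proof.
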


Combining this result with a classical Theorem by Halphen about pencils on projective spaces (which we
generalize to simply connected projective manifolds in  Theorem \ref{T:Halphen}) we are able
to control the first integrals of (semi)-stable foliations on Fano manifolds with rank one Picard group
having (negative) zero canonical bundle.

\subsection{Plan of the paper}  In Section \ref{S:prelim} we have collected basic results about foliations
that will be used in the sequel.
Section \ref{S:Halphen} studies the relationship between
the existence of invariant hypersurfaces and the semi-stability of the tangent sheaf. Besides the  proof of Theorem \ref{TI:A}, it also contains a generalization of a classical result of Halphen, and the classification of  foliations with $K\F <0$ on Fano $3$-folds with rank one Picard group (Proposition \ref{P:verynegative}).
Section \ref{S:classifica}   gives  a rough classification (Theorem \ref{T:rr}) of foliations with trivial canonical bundle on Fano $3$-folds with rank one Picard group.
In Section \ref{S:Q3} we give a complete classification of foliations with $K\F =0 $ on three-dimensional quadrics, Theorem \ref{T:Quadricas}. In Section \ref{S:P3}
 we recall the statement of Cerveau-Lins Neto classification (Theorem \ref{T:CLN}), give a classification of the  foliations on $\mathbb P^n$ of degree one and arbitrary codimension (Theorem \ref{T:deg1}), and show how to deduce
the Cerveau-Lins Neto classification for $n>3$ from the classification for $n=3$ using the classification of foliations of degree one.  Sections \ref{S:i2} and \ref{S:i1} deals
with cases of index two (Theorem \ref{T:i2}) and one (Theorem \ref{T:i1}), respectively. And finally in  Section \ref{S:Poisson} we spell out the classification of holomorphic Poisson structures
on Fano $3$-folds with rank one Picard group in Theorem \ref{T:Poisson}.

\section{Basic concepts}\label{S:prelim}

\subsection{Foliations as subsheaves of the tangent and cotangent bundles}
A foliation $\mathcal F$ on a complex manifold is determined by a coherent subsheaf $T\mathcal F$
of the tangent sheaf $TX$ of $X$ which
\begin{enumerate}
\item is  closed under the Lie bracket (involutive), and
\item the inclusion $T\mathcal F \to TX$ has torsion free cokernel.
\end{enumerate}
The locus of points where $TX / T\mathcal F$ is not locally free is
called the singular locus of $\mathcal F$, denoted here by $\sing(\mathcal F)$. Condition (2) implies, in particular, that the codimension of $\sing(\mathcal F)$ is at least two.
The dimension of $\mathcal F$, $\dim \F$ for short,  is by definition the generic rank of $T \mathcal F$. The codimension of
$\F$, $\codim \F$, is defined as the integer $\dim X - \dim \F$.

The dual of $T\mathcal F$ is the cotangent sheaf of $\mathcal F$ and will be denoted by $T^*\mathcal F$.
The determinant of $T^*\mathcal F$, i.e. $(\wedge^{\dim \F} T^* \mathcal F)^{**}$ is what we will call the canonical bundle of $\mathcal F$ and will be denoted by $K \mathcal F$.

There is a dual point of view where $\mathcal F$ is determined by a subsheaf $N^* \mathcal F$ of the cotangent sheaf $\Omega^1_X = T^* X$ of $X$. The involutiveness
asked for in condition (1) above is replace by integrability: $dN^*\mathcal F \subset N^* \mathcal F \wedge \Omega^1_X$ where $d$ is the exterior derivative.
Condition (2) is unchanged: $\Omega^1_X / N^* \mathcal F$ is torsion free. The normal bundle of $\mathcal F$ is defined as the dual of $N^* \mathcal F$. Over the smooth locus $X - \sing(\mathcal F)$ we have the following exact sequence
\[
0 \to T\mathcal F \to TX \to N \mathcal F \to 0 \, ,
\]
but  this is not  valid over the singular locus. Anyway, as the singular set has codimension at least two
we obtain the adjunction formula
\[
KX = K\mathcal F \otimes \det N^* \mathcal F
\]
valid in the Picard group of $X$.

\subsection{Foliations as   $q$-forms and spaces of foliations}
If $\mathcal F$ is a codimension $q$ foliation on a complex variety $X$ then the $q$-th wedge product of the inclusion
\[
N^* \mathcal F \longrightarrow \Omega^1_X
\]
determines a differential $q$-form  $\omega$ with
coefficients in the line bundle  $\det N \mathcal F = (\wedge^{q} N \mathcal F)^{**}$ having the following properties:
\begin{itemize}
\item {\bf Local decomposability}: the germ of $\omega$ at the general point of $X$ decomposes as the product of $q$ germs of holomorphic $1$-forms
\[
\omega = \omega_1 \wedge \cdots \wedge \omega_q .
\]
\item {\bf Integrability}: the decomposition of $\omega$ at the general point of $X$ satisfies Frobenius integrability condition
\[
d \omega_i \wedge \omega =0 \quad \text{ for every } i = 1 , \ldots, q \, .
\]
\end{itemize}
The tangent bundle of $\mathcal F$ can be recovered as the kernel of the  morphism
\[
TX \to \Omega_X^{q-1}\otimes \det N\mathcal F
\]
defined by contraction with $\omega$.

Reciprocally, if $\omega \in H^0(X, \Omega^q \otimes N)$ is a twisted $q$-form with coefficients in a  line bundle $N$
which is locally decomposable and  integrable then the kernel of $\omega$ has generic rank $\dim X - q$, and it is  the tangent bundle of a
holomorphic foliation $\mathcal F$. Moreover, if the zero set of $\omega$ has codimension at least two then $N = \det N \mathcal F$.

\medskip

\begin{example}[Foliations on $\mathbb P^n$ and homogeneous forms]\label{E:Pn} Let $\mathcal F$ be a codimension $q$-foliation on $\mathbb P^n$ given
by $\omega \in H^0(\mathbb P^n, \Omega^q_{\mathbb P^n} \otimes N)$. If $i: \mathbb P^q \to \mathbb P^n$ is a general linear immersion then
$i^* \omega \in H^0(\mathbb P^q, \Omega^q_{\mathbb P^q} \otimes N)$ is a section of a line bundle, and its
zero divisor reflects the tangencies between $\mathcal F$ and $i(\mathbb P^q)$. The degree of $\mathcal F$ is, by definition, the degree of
such tangency divisor. It is commonly denoted by $\deg(\mathcal F)$.
Since $\Omega^q_{\mathbb P^q}\otimes N= \mathcal O_{\mathbb P^q}( \deg(N) - q - 1)$, it follows that $N= \mathcal O_{\mathbb P^n}(\deg(\mathcal F) + q + 1)$.

The Euler sequence implies that a section $\omega$ of $\Omega^q_{\mathbb P^n} ( \deg(\mathcal F) + q + 1  )$ can be thought as a polynomial $q$-form
on $\C^{n+1}$ with  homogeneous coefficients of degree $\deg(\mathcal F) + 1$, which we will still denote by $\omega$, satisfying  (*) $i_R  \omega = 0$ where
$ R = x_0 \frac{\partial}{\partial x_0} + \cdots + x_n \frac{\partial}{\partial x_n}$ is the radial vector field. Thus the study of foliations of degree
$d$ on $\mathbb P^n$ reduces to the study of locally decomposable, integrable homogeneous $q$-forms of degree $d+1$ on $\mathbb C^{n+1}$ satisfying the relation (*).
\end{example}

\subsection{Harder-Narasimhan filtration}Let $\mathcal E$ be a torsion free coherent sheaf on a  $n$-dimensional smooth projective  variety $X$ polarized
by an ample line bundle $H$. The slope of $\mathcal E$ (more precisely the $H$-slope of $\mathcal E$)  is defined as the quotient
\[
\mu ( \mathcal E) = \frac{ c_1( \mathcal E) \cdot H^{n-1} }{ \rank ( \mathcal E) } \, .
\]
If the slope of every nonzero proper  subsheaf $\mathcal E'$ of $\mathcal E$ satisfies $\mu ( \mathcal E') < \mu ( \mathcal E )$ (respectively $\mu ( \mathcal E') \le \mu ( \mathcal E )$) then $\mathcal E$ is called stable (respectively semi-stable). A sheaf which is semi-stable but not stable is said to be strictly semi-stable.

There exists a unique filtration of $\mathcal E$ by torsion free subsheaves
\[
0 = \mathcal E_0 \subset \mathcal E_1 \subset \cdots \subset \mathcal E_r = \mathcal E
\]
such that  $\mathcal G_i := \mathcal E_i / \mathcal E_{i-1}$ is semi-stable, and
$\mu(\mathcal G_1) > \mu(\mathcal G_2) > \ldots > \mu(\mathcal G_r) \, .$ This filtration is called the Harder--Narasimhan filtration of $\mathcal E$.
Of course $\mathcal E$ is semi-stable if and only if $r=1$.
Usually one writes $\mu_{max}(\mathcal E) = \mu(\mathcal G_1)$ and $\mu_{min}(\mathcal E) = \mu(\mathcal G_r)$.

We will say that a foliation $\mathcal F$ is stable/semi-stable/strictly semi-stable when its tangent sheaf $T\mathcal F$ is stable/semi-stable/strictly semi-stable .
When $\mathcal E$ is the tangent sheaf of a foliation $\mathcal F$,  the proof
of  \cite[Chapter 9, Lemma 9.1.3.1]{kol} (see also \cite[Proposition 2.1]{LPT}) implies the following result.

\begin{prop}\label{P:unstable}
Let $\mathcal F$ be a foliation on a polarized smooth projective variety $(X,H)$ satisfying  $\mu ( T\mathcal F) \ge 0$.
If $\mathcal F$ is not semi-stable then the maximal destabilizing subsheaf of $T \mathcal F$
is involutive. Thus there exists a semi-stable foliation $\mathcal G$ tangent to  $\mathcal F$ and
satisfying
$
\mu ( T \mathcal G) >    \mu ( T \mathcal F ) \, .
$
\end{prop}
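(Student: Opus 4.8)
The plan is to run the classical argument (compare \cite[Ch.~9]{kol}, \cite[Prop.~2.1]{LPT}) showing that the first step of the Harder--Narasimhan filtration of an involutive sheaf of non-negative slope is again involutive. Write $0 = \mathcal E_0 \subset \mathcal E_1 \subset \cdots \subset \mathcal E_r = T\mathcal F$ for the Harder--Narasimhan filtration of $T\mathcal F$. Since $\mathcal F$ is not semi-stable we have $r \ge 2$; the sheaf $\mathcal E_1 = \mathcal G_1$ is semi-stable of slope $\mu_{max}(T\mathcal F)$, it is saturated in $T\mathcal F$, and, $\mu(T\mathcal F)$ being a convex combination of the strictly decreasing slopes $\mu(\mathcal G_1) > \cdots > \mu(\mathcal G_r)$, we record
\[
\mu(\mathcal E_1) = \mu_{max}(T\mathcal F) \ge \mu(T\mathcal F) \ge 0
\qquad\text{and}\qquad
\mu_{max}(T\mathcal F/\mathcal E_1) = \mu(\mathcal G_2) < \mu_{max}(T\mathcal F).
\]

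Next I would introduce the ``Lie bracket modulo $\mathcal E_1$'' morphism. The map $\mathcal E_1 \times \mathcal E_1 \to TX/\mathcal E_1$ sending $(v,w)$ to the class of $[v,w]$ is $\mathcal O_X$-bilinear --- the Leibniz identity $[fv,w] = f[v,w] - (w\cdot f)\,v$ becomes $\mathcal O_X$-linear once one reduces modulo $\mathcal E_1$, because $(w\cdot f)\,v \in \mathcal E_1$ --- it is alternating, and its image lies in $T\mathcal F/\mathcal E_1$ since $T\mathcal F$ is involutive. Hence it factors through a morphism $\varphi\colon \wedge^2\mathcal E_1 \to T\mathcal F/\mathcal E_1$, and $\mathcal E_1$ is involutive precisely when $\varphi = 0$.

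The heart of the matter is then a slope comparison. Working over $\mathbb C$, tensor products of semi-stable sheaves are semi-stable; since $\wedge^2\mathcal E_1$ is a quotient of $\mathcal E_1\otimes\mathcal E_1$ and $\det(\wedge^2\mathcal E_1) = (\det\mathcal E_1)^{\otimes(\rank\mathcal E_1 - 1)}$, we get $\mu_{min}(\wedge^2\mathcal E_1) \ge 2\mu(\mathcal E_1)$. A nonzero morphism from a sheaf $A$ to a sheaf $B$ forces $\mu_{min}(A) \le \mu_{max}(B)$, its image being a quotient of $A$ and a subsheaf of $B$. Here $\mu_{min}(\wedge^2\mathcal E_1) \ge 2\mu(\mathcal E_1) \ge \mu(\mathcal E_1) = \mu_{max}(T\mathcal F) > \mu_{max}(T\mathcal F/\mathcal E_1)$, using $\mu(\mathcal E_1) \ge 0$; therefore $\varphi = 0$, that is $[\mathcal E_1,\mathcal E_1]\subset\mathcal E_1$. (That $\wedge^2\mathcal E_1$ and $\mathcal E_1\otimes\mathcal E_1$ may carry torsion is harmless, since the target $T\mathcal F/\mathcal E_1$ is torsion free: one may restrict to the big open set where $\mathcal E_1$ is a subbundle of $TX$, or argue modulo torsion.)

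It remains to extract $\mathcal G$. The cokernel $N\mathcal F$ of $T\mathcal F\subset TX$ is torsion free and $\mathcal E_1$ is saturated in $T\mathcal F$, so by transitivity of saturation $TX/\mathcal E_1$ is torsion free; combined with the involutivity just proved, $\mathcal E_1$ is the tangent sheaf of a foliation $\mathcal G$ tangent to $\mathcal F$, semi-stable because $\mathcal E_1 = \mathcal G_1$ is, with $\mu(T\mathcal G) = \mu(\mathcal E_1) = \mu_{max}(T\mathcal F) > \mu(T\mathcal F)$ as $r\ge 2$. The only step that is not pure slope bookkeeping is the characteristic-zero semi-stability of tensor powers of semi-stable sheaves, which I would simply quote; beyond that the care goes into the torsion and saturation points above.
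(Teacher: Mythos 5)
Your proof is correct and follows essentially the same route the paper intends: the paper gives no independent argument but invokes the proof of \cite[Chapter 9, Lemma 9.1.3.1]{kol} and \cite[Proposition 2.1]{LPT}, which is exactly the mechanism you reconstruct --- the Lie bracket descends to an $\mathcal O_X$-linear map $\wedge^2\mathcal E_1 \to T\mathcal F/\mathcal E_1$ that must vanish because $\mu_{min}(\wedge^2\mathcal E_1)\ge 2\mu(\mathcal E_1)\ge\mu(\mathcal E_1)>\mu_{max}(T\mathcal F/\mathcal E_1)$, using the hypothesis $\mu(T\mathcal F)\ge 0$ in the right place. The saturation and torsion remarks at the end are also handled correctly.
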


\begin{example}\label{E:concreto}
If $\mathcal F$ is a foliation of $\mathbb P^n$ then the slope of $T\mathcal F$ is
\[
\mu(T \mathcal F) = \frac{\dim(\mathcal F) - \deg(\mathcal F) }{\dim(\mathcal F)} \, .
\]
Therefore $T\mathcal F$ is semi-stable if and only if for every distribution $\mathcal D$ tangent to $\mathcal F$ we have
$
\frac{\deg(\mathcal D)}{\dim(\mathcal D)} \ge   \frac{\deg(\mathcal F)}{\dim(\mathcal F)}  \, .
$
Of course, $T \mathcal F$ is stable if and only if the strict inequality holds for every proper distribution $\mathcal D$.

If $ \mathcal F$ is unstable and $\deg(\mathcal F ) \le \dim(\mathcal F)$ then there exists a foliation $\mathcal G$ contained in $\mathcal F$ satisfying
\[
\frac{\deg(\mathcal G) } { \dim (\mathcal G) }  <\frac{\deg(\mathcal F) } { \dim (\mathcal F) }
\, .
\]
\end{example}

\subsection{Miyaoka-Bogomolov-McQuillan Theorem}

We recall that an algebraic  variety $Y$ is  rationally connected  if through any two points $x,y \in Y$ there exists a rational curve $C$ in $Y$ containing $x$ and $y$. Foliations
with all leaves algebraic and with rationally connected general leaf will be called rationally connected foliations. Beware that there exists  rationally connected foliation
with some leaves non rationally connected, see for instance \cite[\S 2.3]{LPT}. A fundamental result in the study of holomorphic foliations is  Miyaoka's Theorem
 see  \cite[Theorem 8.5]{Miyaoka}, \cite[Chapter 9]{kol}, which was later generalized by Bogomolov and McQuillan \cite{BogMac}, \cite{KST}.
Below we present a variation of it which is deduced in \cite[Corollary 2.3]{LPT} from the main statement of \cite{KST}.

\begin{thm}\label{T:MBM}
Let $\mathcal F$ be a semi-stable foliation on a $n$-dimensional polarized projective variety $(X,H)$. If $K\mathcal F \cdot H^{n-1} < 0$ then
$\mathcal F$ is a rationally connected foliation.
\end{thm}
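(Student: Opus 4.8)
The plan is to reduce the statement to the Bogomolov--McQuillan theorem in the form established by Kebekus--Sol\'a Conde--Toma \cite{KST} (see also \cite{BogMac}, \cite[Chapter 9]{kol}): namely, if $\mathcal F$ is a foliation on a normal projective variety whose restriction to a general complete intersection curve is an ample vector bundle, then $\mathcal F$ is algebraically integrable and the closure of its general leaf is rationally connected. So the whole point is to verify that $T\mathcal F$ restricts to an ample bundle on a general complete intersection curve. First I would rephrase the hypothesis as a positivity statement: since $K\mathcal F = \det T^*\mathcal F$, one has $c_1(T\mathcal F) = -K\mathcal F$ in $NS(X)$, hence
\[
\mu(T\mathcal F) \;=\; \frac{c_1(T\mathcal F)\cdot H^{n-1}}{\rank T\mathcal F} \;=\; -\,\frac{K\mathcal F\cdot H^{n-1}}{\dim\mathcal F}\;>\;0,
\]
so that $T\mathcal F$ is an $H$-semi-stable torsion free sheaf of strictly positive slope.

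Next I would fix $m\gg 0$ and take $C = D_1\cap\cdots\cap D_{n-1}$ with $D_i$ general members of $|mH|$. For $m$ large, Bertini's theorem ensures that $C$ is a smooth projective curve contained in the open set where $X$ is smooth and where $T\mathcal F$ is locally free — both complements having codimension at least two — so $T\mathcal F|_C$ is a genuine vector bundle on $C$. By the Mehta--Ramanathan restriction theorem, for $m$ sufficiently large $T\mathcal F|_C$ is semi-stable, and clearly $\mu(T\mathcal F|_C)=m^{n-1}\mu(T\mathcal F)>0$. Since $T\mathcal F|_C$ is semi-stable, every quotient bundle of it has slope at least $\mu(T\mathcal F|_C)>0$, hence positive degree; by Hartshorne's ampleness criterion for vector bundles on smooth curves this means precisely that $T\mathcal F|_C$ is ample. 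Feeding this into the theorem of \cite{KST} recalled above, we conclude that $\mathcal F$ is algebraically integrable with rationally connected general leaf, i.e. $\mathcal F$ is a rationally connected foliation.

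I expect that essentially no genuine obstacle remains at this level, because all the geometric depth — the production of rational curves inside the leaves and the proof of their rational connectedness — is already packaged inside the cited result. The only points needing some care are the two classical inputs used in the restriction step: that semi-stability is preserved under restriction to a general complete intersection curve of sufficiently high degree, and that the notion of ``general'' curve appearing there is compatible with the one required in the Bogomolov--McQuillan--\cite{KST} statement (so that a single curve $C$ works for both). Since both facts are standard, the argument amounts to little more than a translation; if $X$ is allowed to be singular one should additionally check, as above, that a general such $C$ avoids $\sing(X)$ and $\sing(\mathcal F)$, which is automatic as those loci have codimension at least two.
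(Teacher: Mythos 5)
Your argument is correct and is exactly the intended derivation: the paper does not prove this statement itself but quotes it from \cite[Corollary 2.3]{LPT}, where it is deduced from the main theorem of \cite{KST} by precisely your reduction (positive slope plus semi-stability, Mehta--Ramanathan restriction to a general complete intersection curve avoiding $\sing(X)\cup\sing(\mathcal F)$, Hartshorne's ampleness criterion). The only point worth making explicit is that \cite{KST} gives algebraicity of the leaves through points of the curve $C$ and rational connectedness of the general one, from which algebraicity of \emph{all} leaves follows by the standard closure-of-the-family argument, matching the paper's definition of a rationally connected foliation.
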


\subsection{Closed $1$-forms without divisorial components in theirs zero sets}\label{S:parametros}

Let $X$ be a simply-connected projective manifold of dimension at least three. If $D  = \sum \lambda_i H_i$ is a $\mathbb C$-divisor on $X$ with zero (complex)  first Chern class
then there exists a unique closed rational $1$-form $\eta= \eta_D$ on $X$ with simple poles and  residue equal to $\sum \lambda_i H_i$. The associated
foliation has normal bundle equal to $\mathcal O_X( (\eta)_{\infty} - (\eta)_0)$. If we multiply $\eta$ by the defining equations of the (reduced) hypersurfaces $H_i$
then we obtain a section of $H^0(X, \Omega^1_X \otimes \mathcal O_X( \sum H_i) )$. If  $X$ has Picard group $\mathbb Z$ (generated by $\mathcal O_X(1)$)
and the divisors $H_i$ have degree $d_i$ then we get a rational
\begin{align*}
 \Phi : \Sigma  \times \left( \prod_{i=1}^k \mathbb P H^0(X, \mathcal O_X(d_i)) \right) &  \dashrightarrow \mathbb P H^0\big(X,\Omega^1_X ( \sum d_i) \big) \\
 \left(  (\lambda_1: \cdots : \lambda_k), f_1, \ldots, f_k \right) &\mapsto \left( \prod_{i=1}^k f_i \right) \left( \sum_{i=1}^k \lambda_i \frac{df_i}{f_i} \right) \, ,
\end{align*}
where $\Sigma \subset \mathbb P^{k-1}$ is the hyperplane $\{\sum \lambda_i d_i = 0\}$. If the domain is not empty then the closure of the image of $\Phi$  will be denoted by $\Log(d_1, \ldots, d_k)$ when $k\ge3$, and  by $\Rat(d_1,d_2)$ when $k=2$.
Under mild assumptions Calvo-Andrade \cite{Omegar} proved that these subvarieties are irreducible components of the space of foliations on $X$ with normal
bundle $\mathcal O_X(\sum d_i)$. They are usually called the logarithmic components (when $k\ge 3$) or the rational components (when $k=2$) of
the space of foliations. Notice that in general the image of  $\Phi$ is not closed:  the confluence of hypersurfaces may  give rise to
indeterminacy points of $\Phi$, and at the closure of its image one may find foliations defined by closed rational $1$-forms with poles of higher order.
Nevertheless, it can be verified that  for  any $1$-form $[\omega] \in \mathbb P H^0(X, \Omega^1_X(\sum d_i))$  in the closure of the image of $\Phi$ there exists
a section $f$  of $H^0( X, \mathcal O_X(\sum d_i))$  such that $f^{-1}\omega$ is a closed rational $1$-form.

Even if the assumptions of Calvo-Andrade's result do  not hold,  a straightforward adaptation of the proof of \cite[Lemma 8]{CerveauLinsNetoAnnals} shows
that any foliation with normal bundle $\mathcal O_X(d)$ defined by a closed rational $1$-form without divisorial components in its zero set lies in a variety of the form $\Log(d_1, \ldots, d_k)$ with $\sum d_i = d$.

\begin{lemma}\label{L:CLN}
Let $X$ be a projective manifold with $H^0(X, \Omega^1_X)=0$. Let  $\mathcal F$ be
codimension one foliation on $X$ defined by
a closed rational $1$-form $\omega$ with zero set of codimension at least two and
polar divisor $(\omega)_{\infty}= \sum_{i=1}^k r_i D_i$. Then there exists a
holomorphic family of foliations $\mathcal F_t$, $t \in (\mathbb C,0)$, such that
\begin{enumerate}
\item $\mathcal F_0 = \mathcal F$;
\item $N\mathcal F_t = N \mathcal F = \mathcal O_X(\sum_{i=1}^k r_i D_i)$ for   every $t\in \mathbb C$; and
\item $\mathcal F_t$ is defined by a logarithmic $1$-form for every $t\neq 0$.
\end{enumerate}
\end{lemma}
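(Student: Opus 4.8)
The plan is to adapt the perturbation argument in the proof of \cite[Lemma 8]{CerveauLinsNetoAnnals}. Write $D_i=\{F_i=0\}$ with $F_i$ a reduced defining section of $\mathcal O_X(D_i)$, and let $\lambda_i$ be the residue of $\omega$ along $D_i$. The first step is the structure theorem for closed rational $1$-forms: subtracting the logarithmic part leaves a closed rational $1$-form with vanishing residues, whose class in $H^1(X\setminus\bigcup_i D_i,\mathbb C)$ lies, via the residue (Gysin) sequence, in the image of $H^1(X,\mathbb C)=H^0(X,\Omega^1_X)\oplus\overline{H^0(X,\Omega^1_X)}=0$; hence that $1$-form is exact. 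So
\[
\omega\;=\;\sum_{i=1}^{k}\lambda_i\,\frac{dF_i}{F_i}\;+\;dh,
\]
where $h$ is rational with polar divisor bounded by $\sum_{i=1}^k(r_i-1)D_i$; write $h=G/(F_1^{r_1-1}\cdots F_k^{r_k-1})$ with $G$ a section of $\mathcal O_X(\sum_i(r_i-1)D_i)$. One may assume $\omega$ is not already logarithmic, so that $h$, and hence $G$, is nonzero.

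Next I would set up the deformation. For $t\in(\mathbb C,0)$ put $F^{(t)}:=F_1^{r_1-1}\cdots F_k^{r_k-1}+tG$, a section of $\mathcal O_X(\sum_i(r_i-1)D_i)$ with $F^{(0)}=\prod_i F_i^{r_i-1}$, and consider
\[
\eta_t\;:=\;\sum_{i=1}^{k}\Big(\lambda_i-\frac{r_i-1}{t}\Big)\frac{dF_i}{F_i}\;+\;\frac1t\,\frac{dF^{(t)}}{F^{(t)}},
\]
which is formally the logarithmic derivative of the multivalued function $\big(\prod_i F_i^{\lambda_i}\big)(1+th)^{1/t}$. Using $1+th=F^{(t)}/\prod_i F_i^{r_i-1}$ one checks that $\eta_t=\sum_i\lambda_i\frac{dF_i}{F_i}+\frac{dh}{1+th}$, so $\eta_t$ extends holomorphically across $t=0$ with $\eta_0=\omega$, while for $t\neq 0$ it is a \emph{closed logarithmic} $1$-form with simple poles exactly along $D_1,\dots,D_k$ and $D^{(t)}:=\{F^{(t)}=0\}$. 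Clearing denominators by $f_t:=\big(\prod_{i=1}^k F_i\big)F^{(t)}\in H^0(X,\mathcal O_X(\sum_i r_iD_i))$ yields $\omega_t:=f_t\,\eta_t$, and a second short computation gives the affine-linear family
\[
\omega_t\;=\;\Big(\prod_{i=1}^{k}F_i^{r_i}\Big)\omega\;+\;t\,G\sum_{i=1}^{k}\lambda_i\Big(\prod_{j\neq i}F_j\Big)dF_i\;\in\;H^0\big(X,\Omega^1_X\otimes\mathcal O_X(\textstyle\sum_i r_iD_i)\big),
\]
whose value at $t=0$ is the twisted $1$-form representing $\mathcal F$. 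Since $\omega_t=f_t\eta_t$ with $\eta_t$ closed, $\omega_t\wedge d\omega_t=f_t\,\eta_t\wedge df_t\wedge\eta_t=0$, so each $\omega_t$ is integrable.

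Then I would verify that for $t$ in a (possibly smaller) neighbourhood of $0$ the zero set of $\omega_t$ has codimension $\geq 2$. This is an openness statement: a divisorial component of $\sing(\omega_t)$ is cut out by a hypersurface of degree bounded in terms of $\mathcal O_X(\sum_i r_iD_i)$, such hypersurfaces form a compact family, and non-vanishing of $\omega_t$ along all of them is therefore open in $t$; at $t=0$ it holds because $\omega_0$ represents $\mathcal F$, whose singular set has codimension $\geq 2$ by hypothesis. For such $t$, $\omega_t$ defines a foliation $\mathcal F_t$ with $N\mathcal F_t=\mathcal O_X(\sum_i r_iD_i)=N\mathcal F$; by construction $\mathcal F_0=\mathcal F$, and for $t\neq 0$ the foliation $\mathcal F_t$ is defined by the closed logarithmic $1$-form $\eta_t$.

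The only genuinely external ingredient, and the step I would expect to need the most care, is the structure theorem for closed rational $1$-forms used at the start --- in particular, checking that $H^0(X,\Omega^1_X)=0$ is exactly the hypothesis that makes the ``zero residue $\Rightarrow$ exact'' implication go through. A secondary subtlety worth flagging is the choice of denominator: the naive candidate $\prod_i F_i^{r_i}$ over-kills the poles of $\eta_t$ along the $D_i$ and misses the new pole along $D^{(t)}$, so one must multiply by $\big(\prod_i F_i\big)F^{(t)}$ instead, which is precisely what makes the family $\omega_t$ holomorphic at $t=0$.
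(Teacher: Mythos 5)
Your argument is correct and is precisely the adaptation of \cite[Lemma 8]{CerveauLinsNetoAnnals} that the paper invokes without writing out: the decomposition $\omega=\sum_i\lambda_i\,dF_i/F_i+dh$ (valid because $H^0(X,\Omega^1_X)=0$ forces $H^1(X,\mathbb C)=0$, so the residue-free part is exact) followed by the deformation of $\big(\prod_i F_i^{\lambda_i}\big)(1+th)^{1/t}$. The paper gives no independent proof of this lemma, and your explicit family $\omega_t=\prod_i F_i^{r_i}\,\omega+tG\sum_i\lambda_i\big(\prod_{j\neq i}F_j\big)dF_i$, together with the openness of the codimension-two condition on the zero set, is exactly the intended standard argument.
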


\section{First integrals of (semi)-stable  foliations}\label{S:Halphen}

Theorem \ref{T:MBM} tells us that  semi-stable foliations with  negative canonical bundle
have algebraic  leaves and that the general one is rationally connected.  The goal
of this section is to complement this result for codimension one foliations by giving more information about the first integral. We also deal with stable foliations with numerically trivial canonical bundle having rational first integrals, and the  results
here presented will play an important role in proof of the classification of codimension one foliations with $K \F=0$ on Fano $3$-folds with
rank one Picard group.

\subsection{Invariant hypersurfaces and subfoliations}

Let $\mathcal F$ be a foliation of codimension $q$ on a compact K\"{a}hler manifold $X$.
Let $\mathrm{Div} (\mathcal F) \subset \mathrm{Div} (X)$ be the subgroup of the group of  divisors of $X$ generated by irreducible hypersurfaces invariant by $\mathcal F$.
The arguments used in \cite{ghys} to prove Jouanolou's theorem  lead us to the following result.

\begin{lemma}\label{L:invariante}
Suppose the dimension of $\mathcal F$ is greater than or equal to  two.
If $D \in \mathrm{Div}(\mathcal F)$ satisfies
$
c_1(D) = m \cdot c_1( N \mathcal F) \in H^2(X,\mathbb Z)
$
for a suitable $m \in \mathbb Z$ then at least one of the following assertions holds true:
\begin{enumerate}
\item[(a)] the integer $m$ is non-zero  and  $\mathcal F$ is, after a ramified abelian covering of degree $m$ and a
bimeromorphic morphism,  defined by a meromorphic closed $q$-form with coefficients in a flat line bundle; or
\item[(b)] the integer $m$ is zero and  $\mathcal F$ is tangent to a codimension one logarithmic foliation
with poles at the support of $D$ and integral residues; or
\item[(c)] there exists a foliation $\mathcal G$ of codimension $q+1$  tangent  to $\mathcal F$ with normal sheaf
satisfying $$\det N \mathcal G =  \det N \mathcal F \otimes \mathcal O_X(- \Delta)$$ for some
effective divisor $\Delta$.
\end{enumerate}
\end{lemma}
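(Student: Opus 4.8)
The plan is to realize $\mathcal F$ by a locally decomposable and integrable twisted $q$-form $\omega \in H^0(X, \Omega^q_X \otimes N\mathcal F)$ with $\codim\sing(\omega) \ge 2$, and to exploit the $\mathcal F$-invariance of the components of $D$ to clear poles, exactly as in the treatment of Jouanolou's theorem in \cite{ghys}. Write $D = \sum_{i=1}^k \lambda_i D_i$ with the $D_i$ distinct, irreducible and $\mathcal F$-invariant and $\lambda_i \in \mathbb Z \setminus \{0\}$, and let $f_i$ be a local reduced equation for $D_i$. The key observation is that invariance of $D_i$ is equivalent to $f_i$ dividing $df_i \wedge \omega$; hence $\tfrac{df_i}{f_i} \wedge \omega$ is holomorphic, and more generally the wedge of $\omega$ with any (possibly flat-twisted) meromorphic $1$-form whose polar set is contained in $\operatorname{supp}(D)$ extends holomorphically across $\operatorname{supp}(D)$. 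Moreover, if $\alpha$ is such a $1$-form and $d\alpha = 0$, then $\omega \wedge \alpha$ is a holomorphic, locally decomposable $(q+1)$-form with coefficients in $N\mathcal F$ (twisted by the flat bundle carrying $\alpha$), and it is integrable since $d\alpha = 0$ and $\omega$ is integrable; if it is not identically zero then, dividing out the effective divisorial part $\Delta$ of its zero locus and using $\dim\mathcal F \ge 2$, it defines a foliation $\mathcal G$ of codimension $q+1$ and positive dimension, tangent to $\mathcal F$, with $\det N\mathcal G = \det N\mathcal F \otimes \mathcal O_X(-\Delta)$. Thus in every case it suffices to treat the situation in which the relevant wedge vanishes, alternative (c) being settled otherwise.

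Suppose first $m = 0$. Then $\mathcal O_X(D)$ has trivial first Chern class, so it lies in $\Pic^0(X)$ and carries a flat connection with holomorphic flat frames; writing the tautological section (with divisor $D$) in such frames as meromorphic functions $g_\alpha$, the local $1$-forms $\tfrac{dg_\alpha}{g_\alpha}$ agree on overlaps (the transition functions being locally constant) and glue to a global closed logarithmic $1$-form $\eta$ with polar set $\operatorname{supp}(D)$ and $\operatorname{Res}_{D_i}\eta = \lambda_i \in \mathbb Z$. By the pole-clearing observation $\eta \wedge \omega$ is a holomorphic section of $\Omega^{q+1}_X \otimes N\mathcal F$. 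If $\eta \wedge \omega \not\equiv 0$ we are in alternative (c) by the mechanism of the previous paragraph; if $\eta \wedge \omega \equiv 0$ then $\eta$ lies in the conormal sheaf of $\mathcal F$ at the generic point, i.e. $\mathcal F$ is tangent to the codimension one logarithmic foliation defined by $\eta$, which is alternative (b).

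Suppose now $m \ne 0$. The obstruction $c_1(D) = m\,c_1(N\mathcal F)$ is generally nonzero, so no such $\eta$ exists on $X$; instead I would pass to the ramified abelian covering $\pi \colon \widetilde X \to X$ of degree $m$ obtained by extracting an $m$-th root of $\mathcal O_X(D)$ compatible with $N\mathcal F$ — concretely the cyclic cover associated to $N\mathcal F$, twisted by a flat line bundle so that it acquires a section with divisor $D$ — followed, if necessary, by a resolution. On $\widetilde X$ one has $\pi^* D = m\,D'$ with $\mathcal O_{\widetilde X}(D') \cong \pi^* N\mathcal F$ up to a flat bundle, so the tautological section $\sigma$ has $\operatorname{div}(\sigma) = D'$ and $\Omega := \pi^*\omega/\sigma$ is a meromorphic $q$-form with coefficients in a flat line bundle, with polar set in $\operatorname{supp}(D')$, defining $\pi^*\mathcal F$. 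A short computation of its twisted differential (for the flat connection) gives $d\Omega = (\omega \wedge \beta)/\sigma$ for a suitable $1$-form $\beta$ with poles on $\operatorname{supp}(D')$, so $\omega \wedge \beta$ is a holomorphic twisted $(q+1)$-form: if it is not identically zero we land once more in alternative (c), and if it vanishes then $\Omega$ is closed, so $\mathcal F$ — after the degree $m$ abelian cover and the bimeromorphic morphism — is defined by a closed meromorphic $q$-form with coefficients in a flat line bundle, which is alternative (a).

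The routine points — that invariance is equivalent to $f_i$ dividing $df_i \wedge \omega$, the local decomposability and integrability of the auxiliary $(q+1)$-forms, and the Chern-class identity $\det N\mathcal G = \det N\mathcal F \otimes \mathcal O_X(-\Delta)$ with $\Delta$ effective — I expect to be brief. The genuine difficulty should lie in the case $m \ne 0$: constructing the correct ramified abelian cover so that the pulled-back foliation is truly defined by a \emph{closed} meromorphic form with coefficients in a \emph{flat} line bundle — one must simultaneously control the flat discrepancy between $N\mathcal F^{\otimes m}$ and $\mathcal O_X(D)$ and the connection term appearing in $d\Omega$ — and checking that the outcome does not depend on the choices made. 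This is exactly the point that forces the cumbersome phrase ``after a ramified abelian covering of degree $m$ and a bimeromorphic morphism'' into alternative (a).
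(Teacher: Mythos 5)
Your proposal is correct in substance and rests on the same mechanism as the paper's proof: the $\mathcal F$-invariance of the components of $D$ makes a certain $(q+1)$-form holomorphic, and the trichotomy is decided by whether that form vanishes. The difference is organizational, and it is concentrated in the case $m\neq 0$. The paper never leaves $X$: it sets $\eta_i=\sum\lambda_\alpha\,dh^{(i)}_\alpha/h^{(i)}_\alpha$ on a cover $\{U_i\}$ and defines $\theta_i=\eta_i\wedge\omega_i+m\,d\omega_i$, checks via the cocycle relation $\eta_i-\eta_j=m\,dg_{ij}/g_{ij}$ that the $\theta_i$ glue to a global section $\theta$ of $\Omega^{q+1}_X\otimes\det N\mathcal F$, and only invokes the multivalued primitive $\bigl(\prod h_\alpha^{\lambda_\alpha/m}\bigr)\omega_i$ at the very end to read off case (a) when $\theta\equiv 0$. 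You instead pass to the cyclic cover first and compute $d\Omega$ there; this is the same object ($\sigma\, d\Omega$ is, up to a constant, $\pi^*\theta$), but as written your holomorphic $(q+1)$-form lives on $\widetilde X$, while alternative (c) asserts the existence of $\mathcal G$ on $X$ with $\det N\mathcal G=\det N\mathcal F\otimes\mathcal O_X(-\Delta)$ --- and that is what the downstream stability arguments actually use. So you still owe one observation: $\sigma\,d\Omega=\pi^*d\omega-\tfrac{d\sigma}{\sigma}\wedge\pi^*\omega$ is invariant under the deck group (only $d\sigma/\sigma$ appears, which is unchanged under $\sigma\mapsto\zeta\sigma$), hence descends to the desired twisted form on $X$. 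The paper's \v{C}ech formulation gets this for free, which is what it buys over your route; your route buys a cleaner conceptual reading of the $\theta\equiv 0$ alternative as literal closedness of $\Omega$. One further imprecision: your blanket claim that wedging $\omega$ with \emph{any} meromorphic $1$-form with polar set in $\operatorname{supp}(D)$ yields a holomorphic form is false for poles of order $\ge 2$ (invariance only absorbs one power of the local equation); it should be stated for logarithmic $1$-forms, which is all you actually use.
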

\begin{proof}
Let $N = \det N \mathcal F$ and $\omega \in H^0(X, \Omega^q_X \otimes N)$ be a twisted $q$-form defining $\mathcal F$.
Write $D$ as $\sum \lambda_{\alpha} H_{\alpha}$ with $\lambda_{\alpha} \in \mathbb Z$.

Our hypothesis ensure the existence of an open covering of $\mathcal U= \{ U_i \}$ where
\[
H_{\alpha} \cap U_i = \{ h_{\alpha}^{(i)} = 0 \} \quad   \text{ and }  \quad \sum \lambda_{\alpha} \left( \frac{d h_{\alpha}^{(i)}}{h_{\alpha}^{(i)}} - \frac{d h_{\alpha}^{(j)}}{h_{\alpha}^{(j)}} \right) = m \frac{dg_{ij}}{g_{ij}}
\]
where $\{ g_{ij} \} \in H^1(\mathcal U, \mathcal O_X^*)$ is a cocycle defining $N$, i.e. $\omega$ is defined by a collection of $q$-forms $\{ \omega_i \in \Omega^q_X(U_i) \}$ which satisfies $\omega_i = g_{ij} \omega_j$.

On $U_i$, set $\eta_i = \sum \lambda_{\alpha}  \frac{d h_{\alpha}^{(i)}}{h_{\alpha}^{(i)}}$ and define
\[
\theta_i = \eta_i \wedge \omega_i + m \cdot d \omega_i \, .
\]
As the hypersurfaces $H_{\alpha}$ are invariant by $\mathcal F$, $\theta_i$ is a holomorphic $(q+1)$-form.
It is also clear that  $\theta_i$ is   locally decomposable and integrable. Moreover, on $U_i \cap U_j$ we have the identity
\[
\theta_i = \left( \eta_j - m\frac{dg_{ij}}{g_{ij}} \right) \wedge g_{ij} \omega_j + m \cdot d (g_{ij} \omega_j) = g_{ij} \theta_j \, .
\]
Hence the collection $\{\theta_i\}$ defines a holomorphic section  $\theta$ of $\Omega^{q+1}_X \otimes N$. If this section is nonzero then
it defines a foliation $\mathcal G$ with $\det N \mathcal G = \det N \mathcal F \otimes \mathcal O_X( - (\theta)_0 )$, where $(\theta)_0$ is the divisorial part of the zero scheme of $\theta$. We are in case (c).

Suppose now that $\theta$ is identically zero. If $m=0$ then $\eta_i = \eta_j$ on $U_i \cap U_j$ and we can patch then together to
obtain a logarithmic $1$-form $\eta$ with poles at the support of $D$. Clearly we are in case $(b)$.

If $m\neq 0$ then on $U_i$
the (multi-valued) meromorphic $q$-form
\[
\Theta_i = \exp \left( \int \frac{1}{m} \eta_i \right) \omega_i = \left(\prod h_{\alpha_i}^{\lambda_{\alpha}/m} \right) \omega_i
\]
is closed. Moreover, if $U_i \cap U_j \neq \emptyset$ then  $\Theta_i  = \mu_{ij}\Theta_j$ for suitable $\mu_{ij} \in \mathbb C^*$. It is a simple matter  to
see that we are in case (a).
\end{proof}

\subsection{Number of reducible fibers of first integrals}

Let $\mathcal F$ be a codimension one  foliation  on a polarized projective manifold $(X,H)$ having a rational first integral.
Stein's factorization
ensures the existence of a rational first integral $F: X \dashrightarrow C$ with irreducible general fiber. We
are interested in bounding the number of non-irreducible fibers of $f$. More precisely we want to bound the number
\[
r(\mathcal F) = r(F) = \sum_{x \in C} \left( \# \{ \text{ irreducible components of } F^{-1}(x)\} -1 \right) \,  ,
\]
where we  do not count the multiplicity of the irreducible components of $F^{-1}(x)$.
 This problem, for rational functions $F: X \dashrightarrow \mathbb P^1$ has been investigated by A. Vistoli and others. In \cite{Vistoli}
he obtains a bound in function of the rank of the Neron-Severi group of $X$ and what he calls the base number of $F$. In particular,
when $X$ is $\mathbb P^n$, he proves that $r(F) \le \deg(F)^2 -1$ where $\deg(F)$ is the degree of a general fiber of $F$.
Our result below gives much stronger bounds for the first integrals obtained through  Theorem \ref{T:MBM} when $\dim X \ge 3$.

\begin{thm}\label{T:Vistoli} Suppose the dimension $X$ is  at least three.
If $\mathcal F$ is semi-stable and $c_1(T\mathcal F) \cdot H^{n-1} > 0$, or   $\mathcal F$ is stable and
$c_1(T\mathcal F) \cdot H^{n-1} = 0$  then
\[
r(\mathcal F) \le \rank NS(X) - 1 \, ,
\]
where $NS(X)$ is the Neron-Severi group of $X$. In particular, if $X = \mathbb P^n$, $n\ge 3$, then $r(\mathcal F)=0$.
\end{thm}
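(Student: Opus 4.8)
The plan is to translate the statement about reducible fibers of $F\colon X\dashrightarrow C$ into a statement about invariant hypersurfaces of $\mathcal F$ and then apply Lemma \ref{L:invariante}. First I would observe that each fiber $F^{-1}(x)$ is $\mathcal F$-invariant, and when it is reducible, say $F^{-1}(x) = \sum_j m_j^{(x)} D_j^{(x)}$ with $\ell_x$ distinct irreducible components $D_j^{(x)}$, the $\ell_x$ classes $[D_j^{(x)}]$ in $NS(X)$ are subject to a single linear relation (the fibers are all linearly equivalent, so $\sum_j m_j^{(x)} [D_j^{(x)}]$ is independent of $x$), but no more: I would argue that any further relation among the $[D_j^{(x)}]$, collected over all reducible fibers, would force a nontrivial relation that cannot exist because a general fiber is irreducible. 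This is essentially Vistoli's argument and gives that the subgroup of $NS(X)$ generated by all the components of all fibers has rank at least $r(\mathcal F) + 1$ — but I need to go the other direction, bounding $r(\mathcal F)$ from above, so I should instead use the hypotheses on $T\mathcal F$ to produce many numerical relations.

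The key step is this: fix a reducible fiber over $x_0 \in C$ with components $D_1, \dots, D_\ell$ ($\ell \ge 2$). For each $j$, the divisor $D := \ell \cdot D_j - \sum_i m_i D_i$ obtained by subtracting an appropriate multiple of a general (irreducible) fiber is $\mathcal F$-invariant, nonzero, and numerically a rational multiple of zero if and only if $[D_j]$ is proportional to the fiber class; more to the point, since $N\mathcal F$ is the normal bundle and fibers of a rational first integral have $c_1 = c_1(N\mathcal F)$ up to the right normalization — indeed the fiber class equals $c_1(N\mathcal F)$ — we are in position to invoke Lemma \ref{L:invariante} with these invariant divisors. I would apply the lemma to an invariant divisor $D$ supported on fiber components whose class is a nonzero rational multiple (possibly zero) of $c_1(N\mathcal F)$. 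Cases (a) and (b) of Lemma \ref{L:invariante} would say $\mathcal F$ is defined by a closed meromorphic (resp. logarithmic) $1$-form, and case (c) produces a subfoliation $\mathcal G$ of codimension two with $\det N\mathcal G = \det N\mathcal F \otimes \mathcal O_X(-\Delta)$, $\Delta$ effective, hence $K\mathcal G \cdot H^{n-1} = K\mathcal F \cdot H^{n-1} + \Delta \cdot H^{n-1}$: under our hypotheses ($K\mathcal F \cdot H^{n-1} < 0$ in the semi-stable case, or $=0$ in the stable case) this is impossible — in the stable case, $T\mathcal G \subsetneq T\mathcal F$ with $\mu(T\mathcal G) \ge \mu(T\mathcal F)$ contradicts stability; in the semi-stable case one combines semi-stability of $T\mathcal F$ with $\mu(T\mathcal G) = \mu(T\mathcal F) - \frac{1}{n-2}\,\text{(something)}$... actually more cleanly, $T\mathcal G$ has slope $\ge \mu(T\mathcal F) > $ the slope forced by $K\mathcal F\cdot H^{n-1}<0$ giving a rationally connected $\mathcal G$ by Theorem \ref{T:MBM}, a contradiction with the dimension count, or directly contradicts semi-stability. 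So cases (a),(b) hold, meaning $\mathcal F$ is defined by a closed $1$-form.

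Once $\mathcal F$ is defined by a closed rational $1$-form $\eta$, the reducible fibers and their components all appear in the polar/residue data of $\eta$, and a residue computation pins down $r(\mathcal F)$: the number of independent residue relations is governed by $\rank NS(X)$, since the residue divisor must have class zero in $NS(X)\otimes \mathbb{C}$ and the polar components are among the fiber components. Concretely, if there are $N$ distinct invariant hypersurfaces appearing as polar components, their classes span a subgroup of $NS(X)$ of rank $\le \rank NS(X)$, subject to the closedness/residue constraints which reduce the "free" count to fiber-counting; comparing with the earlier lower bound $r(\mathcal F)+1 \le$ (rank of the span of all fiber components) and the fact that on a closed-form foliation the only reducible fibers are those carrying poles, I extract $r(\mathcal F) \le \rank NS(X) - 1$. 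The specialization to $X = \mathbb P^n$, $n \ge 3$ is then immediate since $\rank NS(\mathbb P^n) = 1$. I expect the main obstacle to be the bookkeeping in the last step — making precise exactly which fiber components must be polar components of $\eta$ and confirming that the numerical relations among their classes are exactly the residue relations, so that the two bounds on $r(\mathcal F)$ match; the dichotomy via Lemma \ref{L:invariante} and the exclusion of case (c) are comparatively routine given the stability hypotheses.
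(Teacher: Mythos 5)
Your overall strategy --- produce an $\mathcal F$-invariant divisor, feed it to Lemma \ref{L:invariante}, kill case (c) by (semi-)stability, and handle the closed-form case separately --- is exactly the paper's, but the two load-bearing steps have genuine gaps. First, the invariant divisor you construct does not qualify for Lemma \ref{L:invariante}. Your $D=\ell\cdot D_j-\sum_i m_iD_i$ has $c_1(D)=m\cdot c_1(N\mathcal F)$ only when $[D_j]$ is proportional to the fiber class (as you yourself note), which is precisely what fails when $\rank NS(X)>1$; and the assertion that ``the fiber class equals $c_1(N\mathcal F)$'' is false in general (for a pencil of degree-$d$ hypersurfaces on $\mathbb P^3$ the fiber class is $dH$ while $N\mathcal F=\mathcal O(2d)$). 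Your argument never actually uses the assumption $r(\mathcal F)\ge\rank NS(X)$, which is the only way to get a usable divisor: choose $n_i-1$ components from each reducible fiber, getting $F_1,\dots,F_{r(\mathcal F)}$, and add one irreducible fiber $F_0$; if $r(\mathcal F)\ge\rank NS(X)$ these $r(\mathcal F)+1$ classes satisfy a nontrivial rational relation, yielding a \emph{nonzero} invariant divisor $D$ with $c_1(D)=0$. Lemma \ref{L:invariante} then applies with $m=0$, so only cases (b) and (c) can occur (your case (a) needs $m\ne0$ and is vacuous here).

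Second, your treatment of the closed-form case is not a proof. No residue bookkeeping is needed, and the comparison of ``two bounds on $r(\mathcal F)$'' you sketch is not set up to close. The paper's argument is short and uses the specific shape of $D$: in case (b) the foliation is defined by a logarithmic form $\eta$ with poles exactly on the support of $D$; since the general fiber of $F$ is irreducible, $\eta=F^*\eta'$ for a $1$-form $\eta'$ on $C$, so the polar set of $\eta$ is a union of \emph{entire} fibers; but the support of $D$ necessarily contains some $F_i$ with $i\ge1$ while omitting the discarded component of that same fiber --- contradiction. Finally, a small but real slip in your case (c): the correct adjunction is $c_1(T\mathcal G)=c_1(T\mathcal F)+\Delta$ (equivalently $K\mathcal G=K\mathcal F-\Delta$), not $K\mathcal G=K\mathcal F+\Delta$; with the right sign, $\mu(T\mathcal G)=c_1(T\mathcal G)\cdot H^{n-1}/(n-2)>c_1(T\mathcal F)\cdot H^{n-1}/(n-1)=\mu(T\mathcal F)$ when $c_1(T\mathcal F)\cdot H^{n-1}>0$, and $\mu(T\mathcal G)\ge0=\mu(T\mathcal F)$ in the stable case, which is the intended contradiction.
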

\begin{proof}
Let $x_1, \ldots, x_k$ be  the points of $C$ for which $F^{-1}(x)$ is non-irreducible, and let $n_1, \ldots, n_k$
be the number of irreducible components of $F^{-1}(x_i)$. Choose $n_i-1$ irreducible components in each of the
non-irreducible fibers  and denote them by $F_1, \ldots, F_{r(\mathcal F)}$. If $r(\mathcal F) \ge \rank NS(X)$
then an irreducible fiber $F_0$ is numerically equivalent to a $\mathbb Q$-divisor supported on
$F_1 \cup \cdots \cup F_{r(\mathcal F)}$. Thus there exists a nonzero $D \in \mathrm{Div}(\F)$
with zero Chern class and supported on $F_0 \cup \cdots \cup F_{r(\mathcal F)}$.

 Lemma \ref{L:invariante} implies that
either  there exists a codimension two foliation $\mathcal G$ contained in $\mathcal F$ with $\det N \mathcal G = N \mathcal F \otimes \mathcal O_X( - \Delta)$, for some $\Delta \ge 0$; or $\mathcal F$ is defined by a logarithmic $1$-form  $\eta$ with poles in $D$.
We will now analyze these two possibilities.

If there exists $\mathcal G$ as above and $c_1(T\mathcal F) \cdot H^{n-1} <0$ then
\[
0 <
 c_1(T\mathcal F)\cdot H^{n-1} = ( c_1(T \mathcal G) -  \Delta ) \cdot H^{n-1}  \le  c_1(T \mathcal G)\cdot H^{n-1}  \, ,
\]
which implies $\mu(T\mathcal G) > \mu(T\mathcal F)$ contradicting the semi-stability of $\F$. Similarly, when $c_1(T\mathcal F)\cdot H^{n-1}=0$ we deduce $\mu(T\mathcal G) \ge \mu(T\mathcal F)=0$ contradicting the stability of $\F$.

Suppose now that $\mathcal F$ is defined by $\eta$. As the general  fiber of $F$ is irreducible, there exists a $1$-form
$\eta'$ on $C$ such that $\eta = F^* \eta'$. Consequently the polar set of $\eta$ is set-theoretically equal to a  union
of fibers of $F$. This contradicts the choice of  $F_1, \ldots, F_{r(\F)}$, and concludes the proof.
\end{proof}

\subsection{Multiple fibers of rational maps to $\mathbb P^1$} A classical result of Halphen \cite[Chapitre 1]{Halphen} says that a
rational map $F: \mathbb P^n \dashrightarrow \mathbb P^1$ with irreducible general fiber has at most two multiple fibers. In this section
we follow closely the exposition of Lins Neto \cite{HLN} to establish the  following generalization.

\begin{thm}\label{T:Halphen}
Let $X$ be a simply-connected compact K\"{a}hler manifold  and $F : X \dashrightarrow \mathbb P^1$ be meromorphic map.
If the general fiber of $F$ is irreducible then $F$ has at most two multiple fibers.
\end{thm}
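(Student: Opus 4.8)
The plan is to mimic the classical Halphen argument (following Lins Neto's exposition) using the logarithmic $1$-form attached to a collection of multiple fibers. Suppose, for contradiction, that $F$ has at least three multiple fibers, over points $p_1,p_2,p_3\in\mathbb P^1$, with multiplicities $m_1,m_2,m_3\ge 2$. After an automorphism of $\mathbb P^1$ we may take $p_1,p_2,p_3=0,1,\infty$. Write $F$ locally in terms of two sections $f,g$ of a line bundle on $X$ with $F=f/g$; the fibers $F^{-1}(0)$, $F^{-1}(\infty)$ are the zero divisors of $f$ and $g$, and $F^{-1}(1)$ the zero divisor of $f-g$. The reduced divisors underlying these three fibers are $D_1=\{h_1=0\}$, $D_2=\{h_2=0\}$, $D_3=\{h_3=0\}$, where $h_i$ is the radical, and $f=h_1^{m_1}u_1$, $g=h_3^{m_3}u_3$, $f-g=h_2^{m_2}u_2$ for units/defining-data as appropriate. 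The key point is that $d\log(f/g)=dF/F$ pulls back from $\mathbb P^1$, so on $X$ we get the closed meromorphic $1$-form
\[
\eta \;=\; \frac{df}{f}-\frac{dg}{g}\;=\;\frac{dF}{F(F-1)}\cdot(\text{const})
\]
whose polar divisor is supported exactly on $D_1\cup D_2\cup D_3$, with residues $m_1$ along $D_1$, $-m_3$ along $D_3$, and $-m_2$ along $D_2$ (after normalizing; the point is the residues are integers of mixed sign summing to zero, coming from the three multiplicities).

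The next step is to obtain a contradiction from the existence of such a logarithmic $1$-form with \emph{nontrivial, non-proportional} residues. Since $X$ is simply connected, $H^1(X,\mathbb C)=0$, so $\eta$ cannot be $d$ of a global meromorphic function built only from one ratio of the $h_i$: concretely, the residue divisor $R=m_1 D_1-m_2 D_2-m_3 D_3$ must be a multiple of a single divisor class if $\eta$ is to be (up to scale) $d\log$ of a single rational function, but here $R$ is genuinely supported on three distinct irreducible components with residues not all proportional to those of $\frac{dF}{F}$ or $\frac{d(F-1)}{F-1}$ alone. More precisely, I would run the argument as in \cite{HLN}: restrict to a general curve $\Gamma\subset X$ (a complete intersection of general hyperplane sections), on which $F|_\Gamma:\Gamma\to\mathbb P^1$ is a finite morphism with $D_i\cap\Gamma$ consisting of ramification of order divisible by $m_i$ over $p_i$. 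Pulling $\eta$ back to $\Gamma$ and computing residues, Riemann–Hurwitz forces
\[
2g(\Gamma)-2 \;=\; \deg(F|_\Gamma)\bigl(-2\bigr)+\sum(\text{ramification})\;\ge\;\deg(F|_\Gamma)\Bigl(-2+\sum_{i=1}^{3}\bigl(1-\tfrac{1}{m_i}\bigr)\Bigr).
\]
Since each $m_i\ge 2$, the bracket is $\ge -2+3\cdot\tfrac12=-\tfrac12$; but because $\Gamma$ can be chosen so that $F|_\Gamma$ has arbitrarily large degree (increasing the degree of the hyperplane sections cutting out $\Gamma$) while the geometric genus $g(\Gamma)$ is controlled, one gets a contradiction exactly in the borderline case — this is where simple connectedness re-enters, ruling out the elliptic/parabolic degenerations $(2,2,2,2)$, $(2,3,6)$, $(2,4,4)$, $(3,3,3)$ that would otherwise allow three (or four) multiple fibers for $\Gamma$ a curve.

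Alternatively — and this is the cleaner route I would actually write — I would invoke Lemma \ref{L:invariante}(b) directly: the foliation $\mathcal F$ defined by $F$ is tangent to the codimension-one logarithmic foliation defined by $\eta$, whose normal bundle is $\mathcal O_X\bigl(\sum r_i D_i\bigr)$ for the reduced fibers $D_i$ with their multiplicities. Then one argues that a simply connected $X$ admits no closed logarithmic $1$-form whose residue divisor has three components with the stated incompatibility, by examining the map $X\dashrightarrow\mathbb P^1$ or $\mathbb P^2$ that $\eta$ together with a second independent logarithmic form would define, and using that the only closed logarithmic forms on a simply connected manifold come (after Stein factorization) from pencils, hence have polar support equal to a union of fibers — pushing everything back to the two-multiple-fiber case on $\mathbb P^1$, where $\eta_{\mathbb P^1}=\lambda_1\frac{dx}{x}+\lambda_2\frac{d(x-1)}{x-1}+\lambda_3\frac{dx}{x}$ can have at most two genuine poles with the remaining "pole" being a zero unless a residue vanishes. \textbf{The main obstacle} is making the borderline Riemann–Hurwitz/residue bookkeeping rigorous: one must choose the test curve $\Gamma$ carefully so that $F|_\Gamma$ remains ramified to the full order $m_i$ over each $p_i$ (a transversality/general-position statement, clean on $\mathbb P^n$ but needing Bertini-type care on a general K\"ahler $X$), and then show simple connectedness of $X$ forces $g(\Gamma)$ low enough — or equivalently forces the induced cover $\Gamma\to\mathbb P^1$ to have too few branch points — to close the inequality strictly. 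I expect that handling the four-multiple-fiber configuration $(2,2,2,2)$, whose associated orbifold is a torus, is the delicate subcase: there the naive Riemann–Hurwitz bound is an equality, and one genuinely needs $\pi_1(X)=0$ (equivalently $H^1(X,\mathbb C)=0$, so no global holomorphic $1$-form pulled back from an elliptic curve) to exclude it.
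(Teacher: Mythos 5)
There is a genuine gap, in fact two. First, your Riemann--Hurwitz reduction does not close on a general compact K\"ahler $X$: a complete-intersection curve $\Gamma$ cut out by high-degree hyperplane sections has geometric genus growing at least linearly in $\deg(F|_\Gamma)$, so the inequality $2g(\Gamma)-2\ge \deg(F|_\Gamma)\bigl(-2+\sum_i(1-\tfrac{1}{m_i})\bigr)$ yields no contradiction for any triple of multiplicities; the classical argument only works because on $\mathbb P^n$ one can take $\Gamma$ to be a \emph{rational} line. The paper replaces this step by a purely cohomological one: writing $F=f^p/g^q$ with $f^p+g^q+h^r=0$ as homogeneous functions on the $\mathbb C^*$-bundle $E$ attached to a primitive line bundle $\mathcal L$ with $\mathcal L^{\otimes k}=F^*\mathcal O_{\mathbb P^1}(1)$, differentiating this syzygy and contracting with the Euler vector field produces a nonzero section of $\Omega^1_X\otimes\mathcal L^{\otimes a}$ with $a=k(\tfrac1p+\tfrac1q+\tfrac1r-1)$; since $H^0(X,\Omega^1_X\otimes\mathcal L^{\otimes b})=0$ for $b\le 0$ on a simply-connected compact K\"ahler manifold, this forces $\tfrac1p+\tfrac1q+\tfrac1r>1$.

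Second, and more seriously, you have the hard cases inverted. Any Euler-characteristic or positivity count of this kind only \emph{eliminates} the parabolic and hyperbolic configurations (which you single out as delicate) and \emph{leaves standing} exactly the spherical triples $(2,2,m)$, $(2,3,3)$, $(2,3,4)$, $(2,3,5)$ --- note these already violate Halphen's conclusion on $\mathbb P^n$, so no residue or Riemann--Hurwitz bookkeeping can dispose of them, and your proposal offers no mechanism for doing so. (Your alternative route via Lemma \ref{L:invariante}(b) also fails here: simply-connected manifolds do carry closed logarithmic $1$-forms with three or more polar components --- that is what the components $\Log(1,1,1)$ are.) The paper's second step is the missing idea: for a spherical triple, $\mathbb C^2\setminus\{0\}$ is the universal cover of $S_{p,q,r}=\{x^p+y^q+z^r=0\}\setminus\{0\}$ by a map with homogeneous polynomial entries; since $E$ minus the zero section is simply connected (Lemma \ref{L:890}, using primitivity of $\mathcal L$) and the indeterminacy locus has codimension two, the map $(f,g,h):E\setminus\{f=g=h=0\}\to S_{p,q,r}$ lifts to $\mathbb C^2\setminus\{0\}$ and descends to a rational map $G:X\dashrightarrow\mathbb P^1$ factoring $F$ through a non-invertible self-map of $\mathbb P^1$, contradicting the irreducibility of the general fiber. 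Without this lifting argument the theorem is not proved.
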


We will say that a line bundle $\mathcal L$ is primitive if its Chern class $c_1(\mathcal L) \in H^2(X,\mathbb Z)$
generates a maximal rank $1$ submodule of $H^2(X,\mathbb Z)$. To adapt Lins Neto's proof of Halphen's Theorem to other manifolds we will
need the following lemma.

\begin{lemma}\label{L:890}
Let $X$ be a simply-connected compact complex manifold.
If $\mathcal L \in \Pic(X)$ is a primitive line bundle   on $X$ then the total space of $\mathcal L$ minus its
zero section is simply-connected.
\end{lemma}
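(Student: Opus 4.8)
The plan is to compute $\pi_1$ of the total space of $\mathcal L$ minus its zero section via the long exact homotopy sequence of a fibration, together with the Gysin sequence in integral cohomology. Write $L$ for the total space of $\mathcal L$, $L_0 \subset L$ for the complement of the zero section, and $p: L_0 \to X$ for the restriction of the bundle projection. Since $\mathcal L$ is a line bundle, $p$ is a $\mathbb C^*$-bundle, hence a locally trivial fibration with fiber $\mathbb C^* \simeq S^1$. The fiber $\mathbb C^*$ is connected and has $\pi_1(\mathbb C^*) \cong \mathbb Z$, so the homotopy exact sequence reads
\[
\pi_1(\mathbb C^*) \longrightarrow \pi_1(L_0) \longrightarrow \pi_1(X) \longrightarrow \pi_0(\mathbb C^*) = 0.
\]
As $X$ is simply connected by hypothesis, the map $\pi_1(\mathbb C^*) \to \pi_1(L_0)$ is surjective; thus $\pi_1(L_0)$ is a cyclic group, a quotient of $\mathbb Z$. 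It remains to show this quotient is trivial, i.e. that the generator of $\pi_1(\mathbb C^*)$ already maps to $0$ in $\pi_1(L_0)$.

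First I would observe that the generator of $\pi_1(\mathbb C^*)$ maps to $0$ in $\pi_1(L_0)$ if and only if it maps to $0$ in $H_1(L_0,\mathbb Z)$, because $\pi_1(L_0)$ is abelian (being cyclic), so $\pi_1(L_0) \cong H_1(L_0,\mathbb Z)$ via Hurewicz. So the problem reduces to the purely homological statement that the fiber class generates a trivial (equivalently, finite of the right order — but here we want trivial) subgroup of $H_1(L_0,\mathbb Z)$. Dually, this is controlled by $H^2(L_0,\mathbb Z)$, and the natural tool is the Gysin sequence of the $S^1$-bundle $L_0 \to X$ with Euler class $e = c_1(\mathcal L) \in H^2(X,\mathbb Z)$:
\[
H^0(X,\mathbb Z) \xrightarrow{\ \cup\, e\ } H^2(X,\mathbb Z) \xrightarrow{\ p^*\ } H^2(L_0,\mathbb Z) \longrightarrow H^1(X,\mathbb Z) \xrightarrow{\ \cup\, e\ } H^3(X,\mathbb Z).
\]
Since $X$ is simply connected, $H^1(X,\mathbb Z) = 0$ and $H^0(X,\mathbb Z) = \mathbb Z$, so the sequence gives an isomorphism $H^2(L_0,\mathbb Z) \cong H^2(X,\mathbb Z)/(\mathbb Z \cdot e) = \operatorname{coker}(\cup\, e)$. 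The hypothesis that $\mathcal L$ is primitive says precisely that $e$ generates a rank-$1$ direct summand of $H^2(X,\mathbb Z)$ — so $H^2(X,\mathbb Z)/(\mathbb Z\cdot e)$ is torsion-free — but more to the point, one checks that this coker statement forces the map $H_1(\text{fiber}) \to H_1(L_0)$ to be zero. Concretely: by the universal coefficient theorem and the primitivity of $e$, the fiber inclusion $S^1 \hookrightarrow L_0$ is trivial on $H^1(\,\cdot\,,\mathbb Z)$-restriction obstructions, hence the fiber bounds in $L_0$; this yields $H_1(L_0,\mathbb Z) = 0$, and combined with the surjection from the previous paragraph, $\pi_1(L_0) = 0$.

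The main obstacle is the final bookkeeping: showing that primitivity of $e$ is exactly the condition needed to kill the fiber class in $H_1(L_0,\mathbb Z)$ rather than merely making it finite. I expect this is cleanest via the dual Gysin sequence in homology,
\[
H_2(X,\mathbb Z) \xrightarrow{\ \cap\, e\ } H_0(X,\mathbb Z) \xrightarrow{\ \iota_*\ } H_1(L_0,\mathbb Z) \xrightarrow{\ p_*\ } H_1(X,\mathbb Z) = 0,
\]
where $\iota_*$ sends the point class to the fiber class; the cap product $H_2(X,\mathbb Z) \to H_0(X,\mathbb Z) \cong \mathbb Z$ is the map $\alpha \mapsto \langle e, \alpha\rangle$. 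Primitivity of $e$ means the evaluation pairing $\langle e,\,\cdot\,\rangle: H_2(X,\mathbb Z) \to \mathbb Z$ is surjective (its image is $d\mathbb Z$ where $e$ is $d$ times a primitive class; $d=1$ gives surjectivity). Surjectivity of $\cap\, e$ then forces $\iota_* = 0$, hence $H_1(L_0,\mathbb Z) = 0$, completing the argument. A small point to be careful about is the identification of the Euler class of the $S^1$-bundle $L_0 \to X$ with $c_1(\mathcal L)$ (up to a harmless sign), and the fact that $\pi_1(X)=0$ is used twice — once to get the surjection onto $\pi_1(L_0)$ and once to kill $H_1(X,\mathbb Z)$ in the Gysin sequence.
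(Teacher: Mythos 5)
Your argument is correct, and it reaches the conclusion by a route that differs from the paper's in the one place where primitivity actually does work. The paper also starts from the $\mathbb C^*$-bundle structure and the Gysin sequence, but it uses the \emph{cohomological} sequence $0=H^1(X,\mathbb Z)\to H^1(E,\mathbb Z)\to H^0(X,\mathbb Z)\xrightarrow{\cup c_1(\mathcal L)} H^2(X,\mathbb Z)$ only to conclude that $\pi_1(E)$ is torsion (here only $c_1(\mathcal L)\neq 0$ is needed, together with the fact that $\pi_1(E)$ is cyclic from the homotopy sequence); primitivity then enters through a covering-space argument: a nontrivial universal cover of $E$ would again be a $\mathbb C^*$-bundle over $X$ whose associated line bundle is a $d$-th root of $\mathcal L$, contradicting primitivity. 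You instead feed primitivity directly into the \emph{homological} Gysin sequence, identifying $H_2(X,\mathbb Z)\xrightarrow{\cap e}H_0(X,\mathbb Z)$ with the evaluation pairing against $e=c_1(\mathcal L)$ and noting that, since $H_1(X,\mathbb Z)=0$ makes $H^2(X,\mathbb Z)\cong\Hom(H_2(X,\mathbb Z),\mathbb Z)$ torsion-free, indivisibility of $e$ is exactly surjectivity of that pairing; this kills the fiber class in $H_1(L_0,\mathbb Z)$ outright. Your version is self-contained and avoids the classification of finite covers of $\mathbb C^*$-bundles, at the cost of being careful about the universal-coefficients identification; the paper's version isolates the geometric content (roots of line bundles) that motivates the definition of primitivity. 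One small remark: your middle paragraph, which tries to extract the conclusion from the cokernel of $\cup\,e$ on $H^2$, is the one genuinely hand-wavy step, but it is superseded by the homological argument in your final paragraph, which is complete.
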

\begin{proof}
Let $E$ be the total space of $\mathcal L$ minus its zero section.
As $E$ is a $\mathbb C^*$-bundle, we can use Gysin sequence
\[
H^1(X,\mathbb Z)  \to H^1(E,\mathbb Z) \to H^0(X,\mathbb Z) \stackrel{\wedge c_1(\mathcal L)}{\longrightarrow} H^2(X,\mathbb Z)
\]
to deduce that the fundamental group of $E$ is torsion.  If $E$ is not simply-connected then its universal covering is
a $\mathbb C^*$-bundle over $X$, and the associated line bundle divides $\mathcal L$.  This contradicts  the primitiveness of $\mathcal L$.
\end{proof}

\subsection*{Proof of Theorem \ref{T:Halphen}}Let $\mathcal L$ be a primitive line bundle and $k$ a positive integer such that
$\mathcal L^{\otimes k}=F^* \mathcal O_{\mathbb P^1}(1)$. If $E$ is the total space of the $\mathbb C^*$-bundle defined by $\mathcal L^*$ then
sections of $\mathcal L$ and its positive powers naturally  define  holomorphic functions on $E$. Moreover, if $f \in H^0(X,\mathcal L^{\otimes k})$ then
the element of $H^0(E,\mathcal O_E)$ determined by $f$, which we still denote by $f$, is homogeneous of degree $k$ with respect to
$\mathbb C^*$-action on $E$ given by fiberwise multiplication. In particular, if $R$ is the vector field on $E$ with flow  defining this $\C^*$-action
then we have the Euler identity
$
i_R df = k f \,
$
on $E$.

Now suppose $F: X \dashrightarrow \mathbb P^1$ has three multiple fibers, of multiplicity $p,q,r$.
Assume that they are over the points $[0:1], [1:0], [1:-1]$. Thus we can write $F= f^p/ g^q$ with
\begin{equation}\label{E:pqr}
f^p + g^q + h^r = 0,
\end{equation}
and $f^p,g^q,h^r \in H^0(X, \mathcal L^{\otimes k})$. If we interpret $f,g,h$ now as functions on
$E$ then taking the differential of the relation (\ref{E:pqr}) we get
$
pf^{p-1} df + q g^{q-1} dg + r h^{r-1} dh =0 \, .
$
Taking the wedge product first with $df$ and then with $dg$, we deduce the following equalities between holomorphic $2$-forms
\[
\frac{ df \wedge dg}{h^{r-1}} = \frac{ dg \wedge dh}{f^{p-1}}  = \frac{ df \wedge dh}{g^{q-1}}
\]
where we have deliberately omitted irrelevant constants.  If we contract these identities with $R$ we
get
\[
\omega = \frac{\frac{k}{p} f dg - \frac{k}{q} g df}{h^{r-1}}= \frac{ \frac{k}{q}g  dh - \frac{k}{r} h dg }{f^{p-1}}  = \frac{ \frac{k}{p} f  dh - \frac{k}{r} h df}{g^{q-1}}
\]
and $\omega$  can be interpreted as holomorphic section of $\Omega^1_X \otimes \mathcal L^{a}$ where
\[
a = \frac{k}{p} + \frac{k}{q} - \frac{(r-1)k }{r} = \frac{k}{q} + \frac{k}{r} - \frac{(p-1)k }{p} = \frac{k}{p} + \frac{k}{r} - \frac{(q-1)k }{q} \, .
\]
Since $X$ is K\"{a}hler and simply-connected,  $H^0(X,\Omega^1_X\otimes \mathcal L^{\otimes b})=0$ for any $b \le 0$. Thus $a>0$, and from this inequality we deduce that
\[
\frac{1}{p} +  \frac{1}{q} + \frac{1}{r}    =  1+ a > 1  \, .
\]
This implies that the triple $(p,q,r)$, after reordering,  must be one of the following: $(2,2,m), (2,3,3), (2,3,4)$, or $(2,3,5)$.

If $(p,q,r)$ belongs to this list then $\mathbb C^2 \setminus \{ 0\}$ is the universal covering of the surface   $S_{p,q,r} = \{ (x,y,z) \in \mathbb C^3 \setminus \{ 0 \} | x^p + y^q + z^r = 0 \}$. Moreover, the entries of the covering map $p = (F,G,H): \C^2 \setminus \{0 \} \to S_{p,q,r} $   are homogeneous
polynomials in two variables satisfying $F^p + G^q + H^r=0$, see \cite[Introduction]{HLN}.

Recall that $E$ is simply-connected according to Lemma \ref{L:890}. Since  the indeterminacy set  of $F$ has codimension two, the manifold
$E \setminus \{ f=g=h=0 \}$  is also simply-connected.
Therefore we can lift the map
\begin{align*}
\varphi : E \setminus \{ f=g=h=0 \} &\longrightarrow S_{p,q,r} \\
x & \longmapsto ( f, g, h ) \, .
\end{align*}
through the covering map $p$ to a map $\tilde \varphi : E \setminus \{ f=g=h=0 \}  \to \mathbb C^2 \setminus \{0\}$. The particular form of the covering map described above implies that $\tilde \varphi$ sends fibers of the $\mathbb C^*$-bundle $E$ to lines through origin
of $\C^2$, and therefore it descends to a rational map $G: X \dashrightarrow \P^1$ which  fits into the diagram below.
\[
\xymatrix{
& \mathbb P^1 \ar[d]
\\
X \ar@{-->}[ur]^G \ar@{-->}[r]^F  &  \mathbb P^1}
\]
Since the vertical arrow is not invertible,   the general fiber of $F$ is not irreducible. With this contradiction we conclude the proof.
\qed

\subsection{Codimension one stable foliations with first integrals} Having Theorem \ref{T:Halphen} at hand we are able
to give precisions about the structure of the first integrals of semi-stable foliations of codimension one having negative canonical bundle
on projective manifolds with rank one Picard group.

\begin{prop}\label{P:boaintegral}
Let $X$ be a simply-connected projective manifold with $\Pic(X)= \mathbb Z$ and  $\mathcal F$ be a codimension one foliation on $X$.
Suppose
\begin{itemize}
\item[(a)] $\mathcal F$ is semi-stable and $K\mathcal F <0$, or
\item[(b)] $\mathcal F$ is stable, has a rational first integral,  and $K\mathcal F = 0$.
\end{itemize}
Then $\mathcal F$ admits a rational first integral of the form $(f^p : g^q) : X \dashrightarrow \mathbb P^1$
where $p,q$ are relatively prime positive integers; and  $f,g$ are sections of line bundles $ \mathcal L_1, \mathcal L_2$ which satisfy
\[
\mathcal L_1^{\otimes p}  = \mathcal L_2^{\otimes q} \quad \text{ and } \quad N\mathcal F = \mathcal L_1 \otimes \mathcal L_2 .
\]
In particular $\mathcal F$ is defined by a logarithmic  $1$-form without divisorial components in its zero set.
\end{prop}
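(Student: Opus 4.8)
The strategy is to produce the first integral $(f^p:g^q)$ by combining Theorem \ref{T:MBM} (in case (a)) or the hypothesis (in case (b)) with Theorem \ref{T:Vistoli} and Theorem \ref{T:Halphen}, and then to carry out the same homogeneous-coordinates computation as in the proof of Theorem \ref{T:Halphen} to identify the logarithmic $1$-form.

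First I would settle the existence of a rational first integral. In case (b) this is assumed. In case (a), Theorem \ref{T:MBM} applies since $K\mathcal F\cdot H^{n-1}<0$ for any ample $H$ (because $K\mathcal F<0$ and $\Pic(X)=\mathbb Z$), so $\mathcal F$ is a rationally connected foliation; in particular all leaves are algebraic and $\mathcal F$ has a rational first integral $F\colon X\dashrightarrow C$. Replacing $F$ by its Stein factorization we may assume the general fiber is irreducible. Since $X$ is simply connected, $C\cong\mathbb P^1$; indeed a nonconstant map to a curve of positive genus would pull back a nontrivial \'etale cover, contradicting simple connectedness (alternatively, a positive-genus target would give $H^0(X,\Omega^1_X)\neq 0$). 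Now apply Theorem \ref{T:Vistoli}: in case (a), $T\mathcal F$ is semi-stable with $c_1(T\mathcal F)\cdot H^{n-1}=-K\mathcal F\cdot H^{n-1}>0$; in case (b), $T\mathcal F$ is stable with $c_1(T\mathcal F)\cdot H^{n-1}=0$. Either way $r(\mathcal F)\le \rank NS(X)-1=0$, so \emph{every} fiber of $F$ is irreducible (as a reduced scheme). Consequently $F\colon X\dashrightarrow\mathbb P^1$ has no non-irreducible fibers, and its only possible reducible behavior is that of multiple fibers; by Theorem \ref{T:Halphen} there are at most two of them.

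Next I would extract the numbers $p,q$ and the line bundles. Write $F=f^p/g^q$ where the two (possibly) multiple fibers are $\{f=0\}$ with multiplicity $p$ and $\{g=0\}$ with multiplicity $q$ (if there are fewer multiple fibers set the corresponding exponent to $1$), with $\gcd(p,q)=1$ after dividing out common factors. Since $\Pic(X)=\mathbb Z$, there is a primitive line bundle $\mathcal L=\mathcal O_X(1)$ and integers making $f^p,g^q$ sections of a common line bundle $\mathcal L^{\otimes k}=F^*\mathcal O_{\mathbb P^1}(1)$; then $f\in H^0(X,\mathcal L_1)$ and $g\in H^0(X,\mathcal L_2)$ with $\mathcal L_1^{\otimes p}=\mathcal L_2^{\otimes q}=\mathcal L^{\otimes k}$. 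The foliation $\mathcal F$ is defined by the closed rational $1$-form $\dfrac{dF}{F}=p\dfrac{df}{f}-q\dfrac{dg}{g}$, which, after multiplying by $fg$, gives the section $p\,g\,df-q\,f\,dg$ of $\Omega^1_X\otimes(\mathcal L_1\otimes\mathcal L_2)$; this is a logarithmic $1$-form with simple poles along $\{f=0\}\cup\{g=0\}$ and integral residues $p,-q$. The normal bundle is then $N\mathcal F=\mathcal L_1\otimes\mathcal L_2$. It remains only to check that this $1$-form has no divisorial component in its zero set: any such component would be contained in $\{f=0\}$ or $\{g=0\}$, i.e.\ $F$ would have a non-reduced general fiber along it, contradicting $r(\mathcal F)=0$ together with the fact that $p,q$ were taken as the full multiplicities; more precisely, a common factor dividing out would contradict $\gcd(p,q)=1$ or the primitivity of $\mathcal L$.

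The main obstacle is the bookkeeping at the two places where the foliation is \emph{not} semi-stable-free, namely making sure that the exponents $p,q$ produced by Halphen's theorem are exactly the multiplicities (so that after clearing denominators the residues are the integers $p,-q$ and no spurious zero divisor appears), and that one is genuinely in the situation of \emph{two} multiple fibers rather than being forced into a degenerate count — here one must also handle the cases of zero or one multiple fiber (then $F$ or a power of $F$ is already a first integral and the statement holds trivially with $p=1$ or $q=1$). The rest is the routine reduction $C=\mathbb P^1$ from simple connectedness, and the direct identification $N\mathcal F=\mathcal L_1\otimes\mathcal L_2$ via the adjunction formula applied to the explicit logarithmic form.
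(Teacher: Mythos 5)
Your proposal is correct and follows essentially the same route as the paper: existence of the first integral from Theorem \ref{T:MBM} (or by hypothesis), irreducibility of every fiber from Theorem \ref{T:Vistoli} since $\rank NS(X)=1$, at most two multiple fibers from Theorem \ref{T:Halphen}, and then the identification of $N\mathcal F=\mathcal O_X(H_0+H_\infty)$ via the pulled-back logarithmic form $dF/F$ with polar divisor $H_0+H_\infty$ and no divisorial zeros. The extra bookkeeping you flag (coprimality of $p,q$, the degenerate cases of fewer than two multiple fibers) is handled implicitly in the paper by the irreducibility of the general fiber and by allowing $p=1$ or $q=1$, so there is no gap.
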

\begin{proof}
Let $F: X \dashrightarrow \mathbb P^1$ be a rational first integral for $\mathcal F$ with irreducible general fiber. Notice that the target has to be $\mathbb P^1$ since $\Pic(X) = \mathbb Z$.
Theorem \ref{T:Vistoli} implies that every fiber of $F$ is irreducible, and  Theorem \ref{T:Halphen} tells us that there are at most two non-reduced fibers. Assume that they
are over $0 , \infty \in \mathbb P^1$ and write $F^{-1}(0)= p H_0 $, $F^{-1}(\infty) = q H_{\infty}$ where $H_0$ and $H_{\infty}$ are reduced and irreducible hypersurfaces.
If we take the logarithmic $1$-form on $\mathbb P^1$ given in homogeneous coordinates by $dx/x - dy/y$ and we pull-back it by $F$ then the
resulting logarithmic $1$-form, which defines $\mathcal F$,  has polar divisor equal to $H_0 + H_{\infty}$ and empty zero divisor. Therefore $N \mathcal F = \mathcal O_X(H_0 + H_{\infty})$
and the rational function $F$ can be written as $(f^p: g^q)$ with $f \in H^0(X,\mathcal O_X(H_0))$, $g \in H^0(X,\mathcal O_X(H_{\infty}))$.  The proposition follows.
\end{proof}

\begin{cor}\label{C:boaintegral}
Let $\mathcal F$ be a semi-stable codimension one foliation on $\mathbb P^{n}$, $n \ge 3$. If $\deg(\mathcal F) < n-1$ then
$\mathcal F$ admits a rational first integral of form $(F^p:G^q)$ where $F$ and $G$ are homogeneous polynomials and $p,q$ are
relatively prime positive integers such that
$p \deg( F) = q \deg( G)$ and $\deg(\mathcal F) = \deg(F) + \deg(G) -2.$
\end{cor}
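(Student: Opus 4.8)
The plan is to deduce Corollary \ref{C:boaintegral} directly from Proposition \ref{P:boaintegral} by passing to the universal cover context already built into the proof of Theorem \ref{T:Halphen}. First I would verify that $\mathbb{P}^n$ satisfies the hypotheses of Proposition \ref{P:boaintegral}: it is simply-connected, projective, and has $\Pic(\mathbb{P}^n) = \mathbb{Z}$. The condition $\deg(\mathcal F) < n - 1$ translates, via Example \ref{E:concreto}, into $\mu(T\mathcal F) = \frac{(n-1) - \deg(\mathcal F)}{n-1} > 0$, i.e. $c_1(T\mathcal F) \cdot H^{n-1} > 0$, equivalently $K\mathcal F < 0$. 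Since $\mathcal F$ is assumed semi-stable, hypothesis (a) of Proposition \ref{P:boaintegral} is met, so $\mathcal F$ admits a rational first integral of the form $(f^p : g^q)$ with $f, g$ sections of line bundles $\mathcal L_1, \mathcal L_2$ satisfying $\mathcal L_1^{\otimes p} = \mathcal L_2^{\otimes q}$ and $N\mathcal F = \mathcal L_1 \otimes \mathcal L_2$.

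Next I would translate the line-bundle data on $\mathbb{P}^n$ into the language of homogeneous polynomials. Since $\Pic(\mathbb{P}^n) = \mathbb{Z}\langle \mathcal O_{\mathbb{P}^n}(1)\rangle$, we may write $\mathcal L_1 = \mathcal O_{\mathbb{P}^n}(a)$ and $\mathcal L_2 = \mathcal O_{\mathbb{P}^n}(b)$ for positive integers $a, b$; then $f$ and $g$ become homogeneous polynomials $F$ and $G$ of degrees $a$ and $b$ respectively. The relation $\mathcal L_1^{\otimes p} = \mathcal L_2^{\otimes q}$ forces $pa = qb$, i.e. $p \deg(F) = q \deg(G)$. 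Finally, the normal bundle relation gives $N\mathcal F = \mathcal O_{\mathbb{P}^n}(a + b)$; on the other hand, by Example \ref{E:Pn} with $q = 1$, the normal bundle of a codimension one foliation of degree $d$ on $\mathbb{P}^n$ is $\mathcal O_{\mathbb{P}^n}(d + 2)$, so $a + b = \deg(\mathcal F) + 2$, i.e. $\deg(\mathcal F) = \deg(F) + \deg(G) - 2$.

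There is essentially no obstacle here beyond bookkeeping, since the hard analytic work is already done in Proposition \ref{P:boaintegral} (which in turn rests on Theorems \ref{T:Vistoli} and \ref{T:Halphen}). The one point that deserves a word of care is the coprimality of $p$ and $q$: this is part of the conclusion of Proposition \ref{P:boaintegral}, obtained there by taking $F$ to have irreducible general fiber via Stein factorization, so it carries over without change. I would also remark that $F$ and $G$ can be taken to have no common factor, since the hypersurfaces $H_0 = \{F = 0\}$ and $H_\infty = \{G = 0\}$ produced in the proof of Proposition \ref{P:boaintegral} are the distinct reduced fibers over $0$ and $\infty$; this ensures the zero set of the defining logarithmic $1$-form $p\,\frac{dG}{G} - q\,\frac{dF}{F}$ (up to scaling) has codimension at least two, consistent with $\mathcal O_{\mathbb{P}^n}(a+b)$ being the genuine normal bundle. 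Assembling these observations yields the corollary.
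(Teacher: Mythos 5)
Your proposal is correct and matches the paper's (implicit) argument: the corollary is stated there as an immediate consequence of Proposition \ref{P:boaintegral}, and your verification of its hypotheses for $\mathbb P^n$ together with the translation of line bundles into homogeneous polynomials is exactly the intended bookkeeping. The opening remark about the universal cover from Theorem \ref{T:Halphen} is unnecessary, but it does not affect the argument.
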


\subsection{Very negative foliations on Fano manifolds with rank one Picard group}
A projective manifold $X$ is Fano if its anticanonical bundle $-KX$ is ample.
Let $H$ be an ample generator of the Picard group of a Fano manifold with $\rho(X)=1$  ($\rho(X)$ is the rank of the Picard group of $X$). The {\bf index} of $X$, denoted by $i(X)$,
is defined through the relation $-KX = i(X) H$. The index of a Fano manifold of dimension $n$ is bounded by $n+1$ and the extremal cases are $\mathbb P^n$ ($i(X)=n+1$)
and hyperquadrics $Q^n \subset \mathbb P^{n+1}$ ($i(X) =n$), see \cite{KO}.

A codimension one foliation of degree one on $\mathbb P^n$ has canonical bundle $K\mathcal F$ equal to $\mathcal O_{\mathbb P^n}(2-n)$, see Example \ref{E:Pn}.
Our next result can be thought as a generalization of Jouanolou's classification of codimension one foliations of degree one on $\mathbb P^n$ \cite[Chapter I, Proposition  3.5.1]{EPA} to arbitrary Fano manifolds
with $\rho(X)=1$.

\begin{prop}\label{P:verynegative}
Let $X$ be a Fano manifold of dimension $n \ge 3$ and Picard number $\rho(X)=1$. Let  $H$ be an ample generator of $\Pic(X)$. If $\mathcal F$ is
a codimension one foliation on $X$ with $K \mathcal F= (2-n) H$ then   $\mathcal F$ is a foliation of degree one
on $\mathbb P^n$, or $\mathcal F$ is the restriction of a pencil of hyperplanes on $\mathbb P^{n+1}$ to a hyperquadric $Q^n$.
\end{prop}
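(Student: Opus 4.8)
The plan is to put the foliation $\mathcal F$ into a position where Theorem \ref{T:truquesujo} and the results of this section apply, and then to analyze the resulting structure case by case using the index hypothesis. First I would check the stability dichotomy. Since $K\mathcal F = (2-n)H$ and $n \ge 3$, we have $K\mathcal F < 0$, so by the adjunction formula $K X = K\mathcal F \otimes \det N^*\mathcal F$ together with $-KX = i(X)H$ we get $\det N\mathcal F = N\mathcal F = (i(X) + 2 - n)H$, and $\mu(T\mathcal F) = \frac{-K\mathcal F\cdot H^{n-1}}{n-1} = \frac{(n-2)H^n}{n-1} > 0$. If $\mathcal F$ is not semi-stable, then by Proposition \ref{P:unstable} there is a semi-stable foliation $\mathcal G$ tangent to $\mathcal F$ with $\mu(T\mathcal G) > \mu(T\mathcal F) > 0$; since $\dim X \ge 3$ and $\mathcal F$ has codimension one, $\mathcal G$ has dimension one or two. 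I would then want to bound the slope of a one- or two-dimensional involutive subsheaf of $TX$ on a Fano manifold with $\rho(X)=1$ to derive a contradiction, or more simply observe that a semi-stable foliation $\mathcal G$ with $K\mathcal G \cdot H^{n-1} < 0$ is rationally connected (Theorem \ref{T:MBM}) and push the analysis there. The cleaner route is: assume first that $\mathcal F$ itself is semi-stable.

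**The semi-stable case.** With $\mathcal F$ semi-stable and $K\mathcal F < 0$, Theorem \ref{T:MBM} gives that $\mathcal F$ is a rationally connected foliation; in particular it has a rational first integral $F: X \dashrightarrow C$, and since $\Pic(X) = \mathbb Z$ we must have $C = \mathbb P^1$. Now Proposition \ref{P:boaintegral} applies (case (a)): $\mathcal F$ admits a first integral $(f^p : g^q)$ with $f, g$ sections of line bundles $\mathcal L_1, \mathcal L_2$ on $X$ satisfying $\mathcal L_1^{\otimes p} = \mathcal L_2^{\otimes q}$ and $N\mathcal F = \mathcal L_1 \otimes \mathcal L_2$. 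Writing $\mathcal L_1 = \mathcal O_X(d_1)$, $\mathcal L_2 = \mathcal O_X(d_2)$ (using $\Pic(X) = \mathbb Z$), the relation becomes $p d_1 = q d_2$ with $\gcd(p,q) = 1$, so $d_1 = q e$, $d_2 = p e$ for some positive integer $e$, and $N\mathcal F = \mathcal O_X((p+q)e)$. Comparing with $N\mathcal F = (i(X) + 2 - n)H$ we get $(p+q)e = i(X) + 2 - n$. The key numerical constraint is that $p + q \ge 3$ (since $\gcd(p,q)=1$ and both are positive, if $p+q \le 2$ then $\{p,q\}=\{1,1\}$, but then the pencil $f/g$ has irreducible generic fiber with no multiple fibers and $N\mathcal F = \mathcal O_X(2e)$; I should treat this separately). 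So $i(X) + 2 - n \ge 3e \ge 3$, i.e. $i(X) \ge n + 1$. By the Kobayashi–Ochiai bound this forces $X = \mathbb P^n$ and $i(X) = n+1$, hence $e = 1$ and $p + q = 3$, so $\{p,q\} = \{1,2\}$. Then $\mathcal F$ is the pull-back of $dx/x - dy/y$ under $(f : g^2)$ with $\deg f = 2$, $\deg g = 1$ on $\mathbb P^n$; chasing degrees through Example \ref{E:Pn}, $N\mathcal F = \mathcal O_{\mathbb P^n}(3)$ gives $\deg \mathcal F = 3 - 0 - n - 1$... I must recompute: $N = \mathcal O(\deg\mathcal F + 2)$ for codimension one, so $\deg \mathcal F = 1$, as desired.

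**The case $p = q = 1$ and the quadric.** When $\mathcal F$ has a first integral $f/g$ with $f, g$ sections of the same line bundle $\mathcal O_X(e)$ and no multiple fibers, we have $N\mathcal F = \mathcal O_X(2e) = (i(X) + 2 - n)H$, so $i(X) = n - 2 + 2e \ge n$ when $e \ge 1$. If $e \ge 2$ then $i(X) \ge n+2$, impossible; if $e = 1$ then $i(X) = n$, which by Kobayashi–Ochiai forces $X$ to be a hyperquadric $Q^n \subset \mathbb P^{n+1}$, and $\mathcal F$ is the pencil generated by two linear forms $f, g$ restricted to $Q^n$ — precisely a pencil of hyperplanes. (If $X = \mathbb P^n$, i.e. $i(X) = n+1$, the equation $n - 2 + 2e = n+1$ has no integer solution, so this subcase does not arise there.) Finally, I would handle the non-semi-stable case: if $\mathcal F$ is not semi-stable, Proposition \ref{P:unstable} produces an involutive $\mathcal G \subsetneq T\mathcal F$ with $\mu(T\mathcal G) > \mu(T\mathcal F) = \frac{(n-2)H^n}{n-1}$; I expect this to be impossible on a Fano manifold with $\rho = 1$ because a rank-one or rank-two subsheaf of $TX$ then has slope exceeding $\frac{n-2}{n-1}\deg(-KX)/n$, contradicting known bounds (e.g. on $\mathbb P^n$, $\mu_{\max}(T\mathbb P^n) = n+1$ but a proper involutive subsheaf has slope at most $n/ \dim$... ) — the honest version is to invoke Example \ref{E:concreto} on $\mathbb P^n$ directly and, for $Q^n$ and the remaining Fanos, note $i(X) \le n$ makes $\mu(T\mathcal F) \ge \frac{n-2}{n-1}\cdot\frac{n}{1}$ too large for a subfoliation. \textbf{The main obstacle} is exactly this last point: ruling out the unstable case cleanly for all Fano $n$-folds with $\rho(X)=1$ rather than just $\mathbb P^n$ — I would likely need a slope bound for low-rank involutive subsheaves of $TX$, perhaps via the Bott vanishing / Hwang–Mok theory on Fano manifolds of large index, or simply reduce to the classification (only $\mathbb P^n$ and $Q^n$ have index $\ge n$) since for these two the tangent bundle is well understood.
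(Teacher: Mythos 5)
Your semi-stable case is essentially the paper's argument (Theorem \ref{T:MBM}, then Proposition \ref{P:boaintegral}, then the Kobayashi--Ochiai bound $i(X)\le n+1$), and the $p,q,e$ bookkeeping is correct. The genuine gap is the unstable case, and it is not merely a loose end: your stated expectation that instability leads to a contradiction is false. The linear pull-back to $\mathbb P^n$ of a generic degree one foliation on $\mathbb P^2$ has $K\mathcal F=(2-n)H$ and is \emph{not} semi-stable --- the degree zero foliation by the fibers of the projection has $c_1(T\mathcal G)=(n-2)H$ and rank $n-2$, hence slope $1$, which exceeds $\mu(T\mathcal F)=\tfrac{n-2}{n-1}$ --- and for a generic linear vector field on $\mathbb P^2$ it admits no rational first integral, so it is not captured by the pencil analysis. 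Such foliations satisfy the conclusion only because any codimension one foliation on $\mathbb P^n$ with $N\mathcal F=\mathcal O_{\mathbb P^n}(3)$ has degree one; the unstable case must therefore be analyzed, not excluded.

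None of your proposed fixes closes this. In the unstable case you cannot assume $i(X)\ge n$: from the mere existence of $\mathcal F$ one only gets $N\mathcal F=(i(X)+2-n)H\ge H$ (since $H^0(X,\Omega^1_X\otimes N)=0$ for $N\le 0$ on a Fano with $\rho(X)=1$), i.e.\ $i(X)\ge n-1$, so index $n-1$ manifolds such as the cubic threefold are not a priori excluded and ``reduce to the classification'' does not apply. The paper's actual argument is the missing idea: the maximal destabilizing foliation $\mathcal G$ satisfies
\[
-K\mathcal G \;=\; c_1(T\mathcal G)\;>\;\frac{(n-2)\dim\mathcal G}{n-1}\,H\;\ge\;(\dim\mathcal G-1)H ,
\]
hence $-K\mathcal G\ge \dim(\mathcal G)H$ because $\Pic(X)=\mathbb Z H$; this yields a nonzero section of $\wedge^{\dim\mathcal G}TX\otimes\mathcal O_X(-\dim(\mathcal G)H)$, and the cohomological characterization of projective space of Araujo--Druel--Kov\'acs \cite{ADK} then forces $X=\mathbb P^n$ with $\mathcal G$ of degree zero, i.e.\ a linear projection, so that $\mathcal F$ is a linear pull-back of a degree one foliation and still falls under the first alternative. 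Without an input of this kind (or an equivalent slope bound for involutive subsheaves of $TX$ valid for all Fano $X$ with $\rho(X)=1$), the proof is incomplete.
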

\begin{proof}
Assume first that $\mathcal F$ is semi-stable. Theorem \ref{T:MBM} implies
$\mathcal F$ has a rational first integral.  Proposition \ref{P:boaintegral} implies $N\mathcal F \ge 2 H$.
Since $KX = K\mathcal F  - N\mathcal F$, it follows  that $KX \le -nH$. Therefore  $KX = -(n+1)H$, $N\mathcal F = 3 H$ and $X = \mathbb P^n$,  or $KX=-nH$, $N\mathcal F = 2 H$ and $X= Q^n$.
Proposition \ref{P:boaintegral} implies
$\mathcal F$ is a pencil of quadrics with a non-reduced member in the first case, and a pencil of hyperplane sections of $Q^n$ in the second case.

Suppose now that $\mathcal F$ is not semi-stable and let $\mathcal G$ be its maximal destabilizing foliation. Therefore
\[
- K \mathcal G = c_1 ( T \mathcal G) > \frac{-K\mathcal F}{\dim(\mathcal F)} \cdot \dim(\mathcal G) \ge (\dim(\mathcal G)- 1 )H \, .
\]
and, consequently,  $-K\mathcal G \ge \dim(\mathcal G) H$ and we can produce a non-zero section of $\wedge^{\dim(\mathcal G)} TX \otimes \mathcal O_X(-\dim(\mathcal G) H)$.
It follows from \cite[Theorem 1.2]{ADK} that $X= \mathbb P^n$ and $\mathcal G$ is a foliation of degree zero on $\mathbb P^n$.
These have been classified in \cite[Th\'{e}or\`{e}me 3.8]{CerveauDeserti}: a codimension $q$  foliation
of degree zero on $\mathbb P^n$ is defined by a linear projection from $\mathbb P^n$ to $\mathbb P^q$.
It follows that  $\mathcal F$ is the linear pull-back of a foliation of degree one on $\mathbb P^{n- \dim(\mathcal G)}$.
\end{proof}

In  \cite{AD} codimension one foliations with $K\mathcal F= (2-n) H$ are called codimension one del Pezzo foliations.

\section{Rough structure}\label{S:classifica}
The goal of this section is to prove the following  result.

\begin{thm}\label{T:rr}
Let $X$ be a Fano $3$-fold with $\Pic(X) = \mathbb Z$, and let $\mathcal F$ be a codimension one foliation on $X$
with trivial canonical bundle.
If $\mathcal F$ is not semi-stable then $X= \mathbb P^3$ and $\mathcal F$ is the linear pull-back of a degree two foliation on
$\mathbb P^2$. If $\mathcal F$ is semi-stable then  at least one of the following assertions holds true:
\begin{enumerate}
\item $T\mathcal F = \mathcal O_X \oplus \mathcal O_X$ and $\mathcal F$ is induced by an algebraic action;
\item there exists an algebraic action of $\mathbb C$ or $\mathbb C^*$ with non-isolated fixed points and  tangent to $\mathcal F$;
\item $\mathcal F$ is given by a closed rational $1$-form without divisorial components in its zero set.
\end{enumerate}
\end{thm}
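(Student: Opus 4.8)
The plan is to treat the two cases of the statement separately and, in the semi-stable case, to feed $\mathcal F$ into Theorem~\ref{T:truquesujo} for the polarization $(X,H)$ with $H$ the ample generator of $\Pic(X)$. Its hypotheses are automatic: $c_1(TX)^2\cdot H=(-K_X)^2\cdot H>0$ because $-K_X$ is ample, and since $\Pic(X)=\mathbb Z$ the numerically trivial bundle $K\mathcal F$ is genuinely trivial, so $\mu(T\mathcal F)=0$.

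\emph{Non-semi-stable case.} This follows the pattern of Proposition~\ref{P:verynegative}. Since $\mu(T\mathcal F)=0\ge 0$, Proposition~\ref{P:unstable} provides a subfoliation $\mathcal G\subsetneq\mathcal F$ with $c_1(T\mathcal G)\cdot H^2>0$; because $\dim\mathcal F=2$ we get $\dim\mathcal G=1$, and because $\Pic(X)=\mathbb Z$ we get $-K\mathcal G=c_1(T\mathcal G)\ge H$, so $\mathcal O_X(H)\hookrightarrow T\mathcal G\hookrightarrow TX$ yields a nonzero section of $TX\otimes\mathcal O_X(-H)$. By \cite[Theorem 1.2]{ADK} this forces $X=\mathbb P^3$ and $\mathcal G$ to have degree zero, hence by the classification of degree zero foliations (Cerveau--D\'eserti, recalled in the proof of Proposition~\ref{P:verynegative}) $\mathcal G$ is the pencil of lines through some point $p\in\mathbb P^3$. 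A codimension one foliation containing it descends under the projection $\pi_p\colon\mathbb P^3\dashrightarrow\mathbb P^2$, so $\mathcal F=\pi_p^*\mathcal G'$ with $\mathcal G'$ a foliation on $\mathbb P^2$; as linear pull-back preserves the degree and $K\mathcal F=0$ forces $\deg\mathcal F=2$, the foliation $\mathcal G'$ has degree two, which is the asserted conclusion.

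\emph{Semi-stable case.} Apply Theorem~\ref{T:truquesujo}. If its alternative~(3) holds, the flat line bundle is trivial ($X$ is simply connected with $\Pic(X)=\mathbb Z$), and we are in assertion~(3). If its alternative~(1) holds, $\mathcal F$ is stable and rationally connected, hence admits a rational first integral $X\dashrightarrow\mathbb P^1$ (the target being $\mathbb P^1$ since $\Pic(X)=\mathbb Z$), and Proposition~\ref{P:boaintegral}(b) puts $\mathcal F$ in assertion~(3). So assume alternative~(2): there is a rationally connected foliation $\mathcal H$ tangent to $\mathcal F$ with $K\mathcal H\cdot H^2=0$, hence $K\mathcal H=\mathcal O_X$. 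If $\dim\mathcal H=2$ then $\mathcal H=\mathcal F$ is rationally connected, so it has a rational first integral $F\colon X\dashrightarrow\mathbb P^1$ with irreducible general fiber; by Theorem~\ref{T:Halphen} $F$ has at most two multiple fibers, which we place over $0$ and $\infty$, and then $F^*(\tfrac{dx}{x}-\tfrac{dy}{y})$ is a logarithmic $1$-form defining $\mathcal F$ whose zero set has codimension at least two --- again assertion~(3). There remains $\dim\mathcal H=1$: then $\mathcal H$ is cut out by a vector field $v\in H^0(X,TX)\setminus\{0\}$ vanishing in codimension at least two and with rational leaves, and the Zariski closure $G$ of $\{\exp(tv)\}$ in $\Aut^0(X)$ is a connected commutative linear algebraic group ($h^1(X,\mathcal O_X)=0$ since $X$ is Fano), acting faithfully with general orbit equal to the closure of a leaf of $\mathcal H$, that is, a rational curve; as the stabilizer of a commutative action is constant along orbits, the generic stabilizer is trivial and $\dim G=1$, so $G=\mathbb C$ or $\mathbb C^*$. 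If $\Fix(G)=\{v=0\}=\sing(\mathcal H)$, which has codimension $\ge 2$, is positive dimensional, we are in assertion~(2), the action being tangent to $\mathcal F$ because $v\in T\mathcal F$.

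\emph{The main obstacle} is the remaining configuration, where $\{v=0\}$ is finite (and nonempty, a $\mathbb C$- or $\mathbb C^*$-action on a projective variety having a fixed point). Here the closure $\overline L\cong\mathbb P^1$ of a general leaf meets $\{v=0\}$ in $\overline L\setminus L\neq\emptyset$; since a finite set meets only finitely many members of the two-parameter family of leaf closures, and these cover $X$, all general leaf closures pass through one fixed point, so $X$ is swept out by the orbit closures of the $\mathbb C$- or $\mathbb C^*$-action issuing from that point. To conclude one must produce a second infinitesimal symmetry of $\mathcal F$; I would use that the flow of $v$ preserves $\mathcal F$, so that $\mathcal L_v\omega=c\,\omega$ for a constant $c$, where $\omega\in H^0(X,\Omega^1_X\otimes N\mathcal F)$ defines $\mathcal F$: when $c=0$ the form $\omega$ is basic for $v$ and descends to the rational quotient surface $X/\mathcal H$, where one lifts a symmetry or a first integral; when $c\neq 0$ the pair $(v,\omega)$ carries a transverse affine structure, again a source of extra symmetry. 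A successful analysis should yield a vector field $w$ tangent to $\mathcal F$ with $v,w$ generating $T\mathcal F$, so that $T\mathcal F$ fits in an extension $0\to\mathcal O_X\to T\mathcal F\to\mathcal O_X\to 0$ which splits because $H^1(X,\mathcal O_X)=0$, giving $T\mathcal F=\mathcal O_X\oplus\mathcal O_X$ and, from $[v,w]\in T\mathcal F$, an action of a two-dimensional group --- assertion~(1) --- while sporadic sub-cases (non-reduced leaves, or higher multiplicity of $v$ at the vertex) should be settled by identifying $X$ explicitly and exhibiting a closed rational $1$-form, landing in assertion~(3). Making these quotient and lifting constructions work across the singularities of $\mathcal H$ and $\mathcal F$, and certifying that the resulting rank two sheaf really is the trivial bundle rather than a nontrivial reflexive sheaf with trivial determinant, is where I expect the genuine difficulty to lie.
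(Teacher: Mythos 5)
Your treatment of the non-semi-stable branch and of the stable branch is sound and essentially matches the paper (the paper invokes Wahl's theorem where you invoke \cite[Theorem 1.2]{ADK}; both yield $X=\mathbb P^3$ and a degree zero subfoliation). The problem is the strictly semi-stable branch, and specifically the configuration you yourself flag as ``the main obstacle'': a global vector field $v$ tangent to $\mathcal F$, vanishing in codimension $\ge 2$, whose zero set is finite. Everything you offer there --- descending $\omega$ to a rational quotient surface, invoking a transverse affine structure, identifying $X$ explicitly in ``sporadic sub-cases'' --- is a programme, not a proof, and as written the argument does not reach any of the three alternatives in this case. This is a genuine gap, not a presentational one: it is exactly the case that produces alternative (1) of the statement, so it cannot be absorbed into the others.

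The paper closes this case with two tools absent from your proposal. First, Lemma \ref{L:campos} (resting on Pereira--S\'anchez): if some infinitesimal automorphism $w$ of $\mathcal F$ satisfies $i_w\omega\neq 0$, then $(i_w\omega)^{-1}\omega$ is a closed rational $1$-form with zero set of codimension $\ge 2$, landing in alternative (3); so one may assume $\mathfrak{fix}(\mathcal F)=\mathfrak{aut}(\mathcal F)$. Second, and decisively, the Division Lemma \ref{L:division}: since $v$ has isolated zeros, contraction with $v$ gives a Koszul resolution of $\mathcal O_{\sing(\mathcal G)}$, and the vanishing $H^1\bigl(X, KX\otimes K\mathcal G^{\otimes -2}\otimes N\mathcal F\bigr)=H^1(X,\mathcal O_X)=0$ (here the twist is trivial because $K\mathcal F=0$ and $T\mathcal G=\mathcal O_X$) lets one write $\omega=i_v\theta$ for a twisted $2$-form $\theta$; the kernel of $\theta$ is the second trivial direction, so $T\mathcal F\cong T\mathcal G\oplus T\mathcal H=\mathcal O_X\oplus\mathcal O_X$ and, since $\Aut(\mathcal F)^0$ is closed, $\mathcal F$ is induced by an algebraic action --- alternative (1). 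This homological division argument is precisely the ``second infinitesimal symmetry'' you were trying to manufacture by quotient and lifting constructions, and it also disposes of your worry about distinguishing the trivial bundle from a nontrivial reflexive sheaf with trivial determinant. Without it (or an equivalent), your proof is incomplete.
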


\subsection{Division Lemma} To prove Theorem \ref{T:rr} we will use the following lemma.

\begin{lemma}\label{L:division}
Let $X$ be a projective $3$-fold, $\mathcal G$ be a one-dimensional foliation on $X$ with isolated singularities,
and $\mathcal F$ a codimension one foliation containing $\mathcal G$. If $H^1 ( X , KX \otimes K{\mathcal G}^{\otimes -2} \otimes N \mathcal F) =0$
then $T \mathcal F \cong T \mathcal G \oplus T \mathcal H$ for a suitable one-dimensional foliation $\mathcal H$.
\end{lemma}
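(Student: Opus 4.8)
The plan is to exhibit a splitting of the inclusion $T\mathcal G \hookrightarrow T\mathcal F$ by constructing a complementary sub-line-bundle $T\mathcal H \subset T\mathcal F$, and the key is to interpret the obstruction to doing so as a class in the cohomology group that the hypothesis forces to vanish. First I would record the relevant line bundles. Since $\mathcal G$ is one-dimensional with isolated singularities, $T\mathcal G$ is a line bundle (its singular scheme has codimension $3$, hence the rank-one reflexive sheaf is locally free on a $3$-fold), and $\mathcal G \subset \mathcal F$ means we have $T\mathcal G \subset T\mathcal F$ with $T\mathcal F$ of rank two. Dividing the rank-two bundle $T\mathcal F$ by $T\mathcal G$ gives a rank-one sheaf; because $\mathcal G$ has isolated singularities this quotient is a line bundle twisted by the ideal sheaf of the singular points, but for the purpose of building a splitting it is cleaner to work with the saturation: let $T\mathcal H_0 := (T\mathcal F / T\mathcal G)^{**}$, a line bundle, so that $\det T\mathcal F = T\mathcal G \otimes T\mathcal H_0$ and hence, using the adjunction formula $K\mathcal F = \det T^*\mathcal F$ together with $KX = K\mathcal F \otimes \det N^*\mathcal F$, one gets $K\mathcal H_0 = T\mathcal H_0^* = T\mathcal G \otimes K\mathcal F = T\mathcal G \otimes KX \otimes N\mathcal F$, i.e. $T\mathcal H_0 = T\mathcal G^{*} \otimes (-KX) \otimes N^*\mathcal F$.

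Next I would set up the extension. Over the open set where $\mathcal G$ is smooth we have a short exact sequence $0 \to T\mathcal G \to T\mathcal F \to T\mathcal H_0 \to 0$, and this extends across the isolated singular points of $\mathcal G$ to a sequence of sheaves on all of $X$ (the quotient $T\mathcal F/T\mathcal G$ differs from the locally free $T\mathcal H_0$ only along a codimension-three set, so $\mathcal Ext^1(T\mathcal H_0, T\mathcal G)$ computations are governed by the locally free model; more carefully, one argues that the extension class lives in $\operatorname{Ext}^1_X(T\mathcal H_0, T\mathcal G)$, which sits in the local-to-global spectral sequence with $H^1(X, \mathcal Hom(T\mathcal H_0, T\mathcal G)) = H^1(X, T\mathcal G \otimes T\mathcal H_0^{*})$ and a local contribution supported at the finitely many singular points). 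The splitting $T\mathcal F \cong T\mathcal G \oplus T\mathcal H$ exists precisely when this extension class vanishes, and the local contributions at isolated points contribute to $H^0$ of a skyscraper, which can be absorbed; the genuine obstruction is the image in $H^1(X, T\mathcal G \otimes T\mathcal H_0^{*})$. Now substitute: $T\mathcal G \otimes T\mathcal H_0^{*} = T\mathcal G \otimes T\mathcal G \otimes KX \otimes N\mathcal F = KX \otimes T\mathcal G^{\otimes 2} \otimes N\mathcal F = KX \otimes K\mathcal G^{\otimes -2} \otimes N\mathcal F$, which is exactly the bundle appearing in the hypothesis. Thus $H^1(X, KX \otimes K\mathcal G^{\otimes -2} \otimes N\mathcal F) = 0$ kills the obstruction and produces the complement $T\mathcal H$.

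The step I expect to be the main obstacle is the bookkeeping at the isolated singularities of $\mathcal G$: one must check that passing from the honest quotient $T\mathcal F/T\mathcal G$ (which is not locally free there) to the line bundle $T\mathcal H_0$ does not introduce a genuine $H^1$-obstruction beyond the stated one, and that a local splitting near each singular point can always be arranged (so that only the global gluing obstruction in $H^1$ remains). Concretely I would argue that $\mathcal Ext^1(T\mathcal H_0, T\mathcal G)$ is supported away from the singular points — since both are locally free there $T\mathcal F/T\mathcal G = T\mathcal H_0$ locally, and even at a singular point one may split the two-dimensional distribution against a chosen vector field generating $\mathcal G$ by a local linear-algebra argument — so the Leray/local-to-global spectral sequence degenerates to give $\operatorname{Ext}^1_X(T\mathcal H_0, T\mathcal G) \cong H^1(X, T\mathcal G \otimes T\mathcal H_0^{*})$, and the vanishing of the latter finishes the proof. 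Once $T\mathcal F = T\mathcal G \oplus T\mathcal H$ as $\mathcal O_X$-modules, involutivity of $T\mathcal F$ is automatic and $\mathcal H$ is the desired one-dimensional foliation.
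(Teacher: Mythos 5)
Your route is genuinely different from the paper's. The paper proves this as a division lemma: contraction with the twisted vector field $v\in H^0(X,TX\otimes K\mathcal G)$ defining $\mathcal G$ gives a Koszul resolution of $\mathcal O_{\sing(\mathcal G)}$ (here isolated zeros are essential), and the hypothesis $H^1(X,KX\otimes K\mathcal G^{\otimes -2}\otimes N\mathcal F)=0$ is exactly what is needed to lift the defining $1$-form $\omega$ of $\mathcal F$ to a twisted $2$-form $\theta$ with $\omega=i_v\theta$; the kernel of $\theta$ is then the sought $\mathcal H$. You instead try to split the extension $0\to T\mathcal G\to T\mathcal F\to Q\to 0$, $Q=T\mathcal F/T\mathcal G$, by an $\mathrm{Ext}$-class argument. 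Your identification of the obstruction bundle is correct: $\det T\mathcal F=K\mathcal F^{-1}=(KX\otimes N\mathcal F)^{-1}$ gives $T\mathcal G\otimes T\mathcal H_0^{*}=K\mathcal G^{\otimes -2}\otimes KX\otimes N\mathcal F$, matching the hypothesis.

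The gap is in the local analysis, which you yourself flag as the crux. Your claim that ``even at a singular point one may split the two-dimensional distribution against a chosen vector field generating $\mathcal G$ by a local linear-algebra argument'' is false as stated, and in fact circular: at a point $p\in\sing(\mathcal G)$ the generator $v$ vanishes as a tangent vector, and a local $\mathcal O$-linear retraction $T\mathcal F\to T\mathcal G$ exists precisely when $v$ generates a sub-line-bundle of $T\mathcal F$ at $p$ --- which is (the local form of) the conclusion you are trying to prove, not something available in advance. Note also that $T\mathcal F$ itself need not be locally free a priori, and the obstruction class lives in $\mathrm{Ext}^1(Q,T\mathcal G)$ with $Q$ only torsion-free, so the local term in the local-to-global sequence is $H^0(X,\mathcal{E}xt^1(Q,T\mathcal G))$ and must actually be shown to vanish. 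The correct repair is sheaf-theoretic rather than linear-algebraic: $T\mathcal F$ is reflexive (it is saturated in $TX$), hence locally free outside a set of codimension $\ge 3$; since $\sing(\mathcal G)$ is finite, $Q$ is a torsion-free rank-one sheaf which is locally free outside finitely many points, i.e.\ $Q\cong J\otimes T\mathcal H_0$ with $\mathcal O_X/J$ of finite length. Then $\mathcal{E}xt^1(Q,T\mathcal G)\cong\mathcal{E}xt^2(\mathcal O_X/J,\mathcal O_X)\otimes(\cdots)=0$ because the support has codimension three, and $\mathcal{H}om(Q,T\mathcal G)=T\mathcal G\otimes T\mathcal H_0^{*}$, so $\mathrm{Ext}^1(Q,T\mathcal G)=H^1(X,KX\otimes K\mathcal G^{\otimes -2}\otimes N\mathcal F)=0$ and the sequence splits; $Q$ is then a direct summand of a reflexive sheaf, hence reflexive, hence the line bundle $T\mathcal H_0$, and its saturation argument gives the foliation $\mathcal H$. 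With this substitution your proof is complete, but without it the key step is unjustified.
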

\begin{proof}
Let $ v \in H^0 ( X, TX \otimes K{\mathcal G})$ be a twisted vector field defining $\mathcal G$. By hypothesis $v$ has isolated zeros.
Therefore (see for instance \cite[Exercise 17.20]{eisenbud})  contraction of differential forms with $v$ defines a resolution of the singular scheme $\mathrm{sing}(\mathcal G)$ of $\mathcal G$:
\[
0 \to \Omega^3_X \to \Omega^2_X \otimes K{\mathcal G} \stackrel{\Phi}{\longrightarrow } \Omega^1_X \otimes K{\mathcal G}^{\otimes 2} \to K{\mathcal G}^{\otimes 3} \to \mathcal O_{\mathrm{sing}(\mathcal G)} \to 0 \, .
\]
After tensoring by $N\mathcal F \otimes K _{\mathcal G} ^{ \otimes -2}$, we obtain from  the exact sequence above the following exact sequences
\[
0 \to Im \Phi \otimes  K{\mathcal G}^{\otimes -2} \otimes N \mathcal F \to \Omega^1_X \otimes N\mathcal F  \to K{\mathcal G} \otimes N   \mathcal F  \, ,
\]
and
\[ 0 \to \Omega^3_X \otimes K{\mathcal G}^{\otimes -2} \otimes N \mathcal F \to \Omega^2 \otimes N \mathcal F \otimes K{\mathcal  G}^{-1} \to  Im \Phi \otimes  K{\mathcal G}^{\otimes -2} \otimes N \mathcal F \to 0 \, .
\]

If $\omega \in H^0(X, \Omega^1_X \otimes N \mathcal F)$ defines $\mathcal F$ then, since $\mathcal F$ contains $\mathcal G$,
$\omega$ belongs to the kernel of
\[
H^0(X,\Omega^1_X \otimes N\mathcal F)  \to H^0(X,K{\mathcal G} \otimes N   \mathcal F ) \, .
\]
The first sequence
tells us that we can lift $\omega$ to $H^0(X,Im \Phi \otimes  K{\mathcal G}^{\otimes -2} \otimes N \mathcal F)$.
The second exact sequence, together with our cohomological hypothesis, ensures the existence of
$\theta \in H^0(X, \Omega^2_X \otimes N \mathcal F \otimes K{\mathcal  G}^{-1})$ such that $\omega = i_v \theta$.
The twisted $2$-form  $\theta$ defines the sought foliation $\mathcal H$. \end{proof}

\subsection{Automorphisms of a foliation}
Let $\mathcal F$ be a codimension one foliation on a projective manifold $X$. The automorphism group of $\mathcal F$, $\Aut(\mathcal F)$,
is the subgroup of $\Aut(X)$ formed by automorphisms of $X$ which send $\mathcal F$ to itself. It is a closed subgroup of $\Aut(X)$, and therefore
the connected component  of the identity is a finite dimensional connected Lie group. We will denote by $\mathfrak{aut}(\mathcal F)$ its Lie algebra,
which can be identified with a subalgebra of $\mathfrak{aut}(X) = H^0(X,TX)$. If $\mathcal F$ is defined by $\omega \in H^0(X, \Omega^1_X \otimes N \mathcal F)$
then we define the $\mathfrak{fix}(\mathcal F)$ as the subalgebra of $\mathfrak{aut}(\mathcal F)$ annihilating $\omega$, i.e.
\[
\mathfrak{fix}(\mathcal F) = \{ v \in \mathfrak{aut}(\mathcal F) \, | \, i_v \omega =0 \} \, .
\]
Notice that  $\mathfrak{fix}(\mathcal F)$ is nothing more than  $H^0(X,T \mathcal F)$.
We also point out  that $\mathfrak{fix}(\mathcal F)$ is an ideal of $\mathfrak{aut}(\mathcal F)$, and that
 subgroup $\Fix(\mathcal F) \subset \Aut(\mathcal F)$ generated by $\mathfrak{fix}(\mathcal F)$ is not necessarily  closed.

\begin{lemma}\label{L:campos}
The following assertions hold true:
\begin{enumerate}
\item If $\mathfrak{fix}(\mathcal F) = \mathfrak{aut}(\mathcal F)\neq 0 $  then there exists a non-trivial algebraic action with general orbit tangent to  $\mathcal F$.
\item If $\mathfrak{fix}(\mathcal F)\neq \mathfrak{aut}(\mathcal F)$  then $\mathcal F$ is generated by a closed rational $1$-form without divisorial components in its zero set.
\end{enumerate}
\end{lemma}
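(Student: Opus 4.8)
The plan is to treat the two assertions separately, starting from a fixed twisted $1$-form description. So let $\omega\in H^0(X,\Omega^1_X\otimes N\mathcal F)$ be a section defining $\mathcal F$ with zero set of codimension at least two, and recall that $\mathfrak{fix}(\mathcal F)=H^0(X,T\mathcal F)$ is exactly the kernel of the contraction map $\phi\colon H^0(X,TX)\to H^0(X,N\mathcal F)$, $v\mapsto i_v\omega$, whereas $\mathfrak{aut}(\mathcal F)$ is the Lie algebra of the stabilizer of the point $[\omega]\in\mathbb P H^0(X,\Omega^1_X\otimes N\mathcal F)$ under the natural $\Aut^0(X)$-action.

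For (1): assuming $\mathfrak{fix}(\mathcal F)=\mathfrak{aut}(\mathcal F)\neq 0$, I would first observe that $\Aut^0(\mathcal F)$ is a positive-dimensional \emph{algebraic} group. Indeed $\Aut^0(X)$ acts trivially on $NS(X)$, so $N\mathcal F$ admits an $\Aut^0(X)$-linearization (automatic whenever $\Pic(X)$ is discrete, the only case needed in the sequel); hence $\Aut^0(X)$ acts algebraically on $\mathbb P H^0(X,\Omega^1_X\otimes N\mathcal F)$ and $\Aut^0(\mathcal F)$ is the identity component of the stabilizer of $[\omega]$, a closed algebraic subgroup with Lie algebra $\mathfrak{aut}(\mathcal F)=\mathfrak{fix}(\mathcal F)=H^0(X,T\mathcal F)$. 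Every fundamental vector field of the $\Aut^0(\mathcal F)$-action on $X$ therefore lies in $H^0(X,T\mathcal F)$, so at a general (smooth) point of a general orbit the tangent space to the orbit is contained in $T\mathcal F$; that is, the general orbit is tangent to $\mathcal F$, and the action is non-trivial since $\dim\Aut^0(\mathcal F)=\dim\mathfrak{fix}(\mathcal F)>0$.

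For (2): assuming $\mathfrak{fix}(\mathcal F)\subsetneq\mathfrak{aut}(\mathcal F)$, I would choose $v\in\mathfrak{aut}(\mathcal F)\setminus\mathfrak{fix}(\mathcal F)$, so that $s:=i_v\omega$ is a non-zero section of $N\mathcal F$, and set $\eta:=\omega/s$. Trivializing $N\mathcal F$ over an open cover $\{U_i\}$ with cocycle $\{g_{ij}\}$ and local forms $\omega_i$ (with $\omega_i=g_{ij}\omega_j$), one has $\eta|_{U_i}=\omega_i/(i_v\omega_i)$, and the factors $g_{ij}$ cancel, so $\eta$ is a globally defined rational $1$-form still defining $\mathcal F$, with polar divisor equal to the zero divisor of $s$ and zero divisor equal to that of $\omega$ — which has codimension at least two, so $\eta$ has no divisorial component in its zero set. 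It remains to check $d\eta=0$: since $v$ preserves $\mathcal F$ we have $\mathcal L_v\omega_i\wedge\omega_i=0$ on $U_i$, and integrability gives $d\omega_i\wedge\omega_i=0$; contracting the latter identity by $v$ and using Cartan's formula $\mathcal L_v\omega_i=i_v\,d\omega_i+d(i_v\omega_i)$ one gets $d(i_v\omega_i)\wedge\omega_i=(i_v\omega_i)\,d\omega_i$, whence
\[
d\eta|_{U_i}=d\!\left(\frac{\omega_i}{i_v\omega_i}\right)=\frac{(i_v\omega_i)\,d\omega_i-d(i_v\omega_i)\wedge\omega_i}{(i_v\omega_i)^2}=0 .
\]
Thus $\eta$ is a closed rational $1$-form without divisorial components in its zero set defining $\mathcal F$.

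The only delicate point is the word \emph{algebraic} in (1): a priori $\Fix(\mathcal F)$ is merely an immersed Lie subgroup of $\Aut(X)$ and need not be closed, so it cannot itself be taken as the desired group; the way around this is to notice that the hypothesis of (1) forces $\mathfrak{fix}(\mathcal F)$ to coincide with the Lie algebra of $\Aut^0(\mathcal F)$, which is algebraic simply because it is the stabilizer of a point in a projective space under an algebraic representation of $\Aut^0(X)$. Part (2) is then routine once one recognizes $\eta=\omega/i_v\omega$ as the candidate and uses that $v\in\mathfrak{aut}(\mathcal F)$ means, at the level of local defining forms, precisely $\mathcal L_v\omega_i\wedge\omega_i=0$.
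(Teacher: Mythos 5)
Your proposal is correct and follows essentially the same route as the paper: part (1) rests on the observation that $\Aut^0(\mathcal F)$ is a closed (hence algebraic) subgroup of $\Aut(X)$ whose Lie algebra is $\mathfrak{fix}(\mathcal F)=H^0(X,T\mathcal F)$, and part (2) produces the closed form $(i_v\omega)^{-1}\omega$ from a vector field $v\in\mathfrak{aut}(\mathcal F)\setminus\mathfrak{fix}(\mathcal F)$. The only difference is that where the paper simply cites Pereira--S\'anchez (\emph{Transformation groups of holomorphic foliations}, Corollary 2) for the closedness of $(i_v\omega)^{-1}\omega$, you verify it directly with Cartan's formula, $\mathcal L_v\omega_i\wedge\omega_i=0$ and the integrability condition --- a correct, self-contained substitute for the citation.
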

\begin{proof}
The connected component of the identity of $\Aut(\mathcal F)$ is closed. If $\mathfrak{fix}(\mathcal F)=\mathfrak{aut}(\mathcal F)$ then
$\Fix(\mathcal F)$ is also closed and therefore correspond to an algebraic subgroup of $\Aut(X)$. Item (1) follows.
To prove Item (2), let $v$ be a vector field in $\mathfrak{aut}(\mathcal F) - \mathfrak{fix}(\mathcal F)$.
If $\omega \in H^0(X, \Omega^1_X \otimes N \mathcal F)$ is a twisted $1$-form defining $\mathcal F$ then   \cite[Corollary 2]{Percy}
implies $(i_v \omega)^{-1} \omega$ is a closed meromorphic $1$-form. Since the singular set of $\omega$ has codimension at least two, the
same holds true for the zero set of  $(i_v \omega)^{-1} \omega$.
\end{proof}

\subsection{Proof of Theorem \ref{T:rr}}If $T \mathcal F$ is not semi-stable then Proposition \ref{P:unstable} implies the existence of a foliation by curves $\mathcal G$
tangent to $\mathcal F$ and with $\mu(T\mathcal G) > 0$.  According to Wahl's Theorem \cite{Wahl}, $X$ is isomorphic to $\mathbb P^3$ and $T\mathcal G = \mathcal O_{\mathbb P^3}(1)$. Thus $\mathcal G$ is
a foliation of degree zero and, consequently, its leaves are the lines through a point $p \in \mathbb P^3$. It follows
that $\mathcal F$ is a pullback  of foliation on $\P^2$ of degree two under the linear projection $\pi : \mathbb P^3 \dashrightarrow \mathbb P^2$
determined by $\mathcal G$.

Suppose now that $T \mathcal F$ is stable. If $\F$ is a foliation by algebraic leaves then  Proposition \ref{P:boaintegral} implies that   $\mathcal F$ is defined
by a logarithmic $1$-form without codimension one components in its zero set. Since Fano manifolds  are simply-connected \cite[Corollary 4.29]{Debarre},
every flat line bundle on $X$ is trivial.  Theorem \ref{T:truquesujo}  implies that  $\mathcal F$ is given by a closed rational
$1$-form without divisorial components in its zero set.

Finally, we will deal with the case where $T\mathcal F$ is strictly semi-stable. Now we have a foliation by curves $\mathcal G$ tangent to $\mathcal F$
with $T \mathcal G = \mathcal O_X$. In other words, $\mathcal G$ is induced by a vector field $v \in H^0(X,TX)$ with zeros of codimension at least two.
Notice that $\mathbb C v \subset \mathfrak{fix}(\mathcal F)$.

Suppose $\mathfrak{fix}(\mathcal F)= \mathfrak{aut}(\mathcal F)$.  If $\mathfrak{fix}(\mathcal F)= \mathbb C v$ then
we claim $\mathcal G$ is defined by an algebraic action of $\mathbb C$ or $\mathbb C^*$ with non-isolated fixed points.
Indeed Lemma \ref{L:campos} implies the existence of  an action of a one-dimensional Lie group tangent to $\mathcal F$.  Moreover, if
the action has only isolated fixed points then  we can apply Lemma \ref{L:division}  to deduce that the tangent bundle of $\mathcal F$ is
$\mathcal O_X \oplus \mathcal O_X$. Notice that the hypothesis of Lemma \ref{L:division} are satisfied since  $KX \otimes K{\mathcal G}^{\otimes -2} \otimes N \mathcal F=\mathcal O_X$
and $H^1(X,\mathcal O_X)=0$ for Fano manifolds.

If we still assume $\mathfrak{fix}(\mathcal F)= \mathfrak{aut}(\mathcal F)$
but now with  $\dim \mathfrak{fix}(\mathcal F)>1$ then,
as $v$ has  no divisorial components in its zero set, any two elements in it will generate $T\mathcal F$. Thus $T \mathcal  F= \mathcal O_X \oplus \mathcal O_X$ in
this case and $\mathcal F$ is defined by an algebraic action since $\Aut(\mathcal F)$ is closed.

Finally, if $\mathfrak{fix}(\mathcal F) \neq \mathfrak{aut}(\mathcal F)$ then Lemma \ref{L:campos} implies $\mathcal F$ is given by a closed meromorphic
$1$-form with zero set of codimension at least two.  \qed

\section{Foliations on the $3$-dimensional quadric}\label{S:Q3}
 We  will now classify the foliations with $K\F=0$ on the $3$-dimensional
quadric.  We start by presenting an example.

\begin{example}\label{E:affQ}
Identify $\mathbb P^4$ with the set of $4$ unordered points in $\mathbb P^1$. This identification
gives  a natural action of $PSL(2,\mathbb C) \simeq \Aut(\mathbb P^1)$ on $\mathbb P^4$.
Let    $p_0 \in \mathbb P^4$ be the point defined by the set $\{ 1,-1, i, -i \} \subset \mathbb P^1$.
The closure of the $PSL(2,\mathbb C)$-orbit of $p_0$ is a smooth quadric $Q^3 \subset \mathbb P^4$, see \cite{MukaiUmemura}.
This quadric can be decomposed as the union of three orbits of $PSL(2,\mathbb C)$: a closed orbit of dimension one
isomorphic to a rational normal curve $\Gamma_4$ of degree $4$ corresponding to points on $\mathbb P^1$ counted with
multiplicity $4$; an orbit $S$ of dimension two corresponding to  two distinct points  on $\mathbb P^1$, one with multiplicity three
and the other with multiplicity one (in more geometric terms this orbit is the tangent surface of $\Gamma_4$); and the open orbit of dimension three corresponding to $4$ distinct points isomorphic to $\{ 1,-1, i, -i \}$
The affine subgroup $\Aff(\mathbb C) \subset \Aut(\mathbb P^1)$ acts on $Q^3$ fixing the point $p=4\infty$, and defines on it a codimension one
foliation $\mathcal F$ with trivial tangent bundle. Notice that the singular set of $\mathcal F$ has three irreducible components: $\Gamma_4$; a twisted cubic $\Gamma_3$ corresponding to points of the form $3p +  \infty$; and a line corresponding to points of the form  $p + 3\infty$. Notice that the foliation $\F$ leaves invariant the surface
$S$ (which belongs to the linear system $|\mathcal O_{Q^3}(3)|$, see \cite[\S2.4]{jinv}), and that the quadratic cone through $p$ (which belongs to $|\mathcal O_{Q^3}(1)|$)
is the unique hyperplane section invariant by $\F$. This is sufficient to show that $\F$ is not in $\Rat(1,2)$ nor in $\Log(1,1,1)$. Indeed, \cite[\S 5.3.1]{vainsencher}
implies  that the image of the rational parametrization of $\Rat(1,2)$  defined in \S\ref{S:parametros} is closed. In particular, foliations in this component do not
leave irreducible elements of $|\mathcal O_{Q^3}(3)|$, like $S$, invariant. The rational parametrization of $\Log(1,1,1)$ do not have closed image, but if an element
is not on the image then the polar divisor of the corresponding closed rational $1$-form $\eta$ must $2H + H'$ or $3H$ where $H,H'$ are distinct elements of $|\mathcal O_X(1)|$.
According to the structure of closed rational $1$-forms  on projective manifolds \cite[appendix to Chapter VII]{Zariski},  in the first case $\eta$ is proportional to $h^2 h' \left( \frac{dh}{h} - \frac{dh'}{h'} + d  (\frac{f }{h})\right)$, and in the second case $\eta$ is proportional
to $h^3 d\left( \frac{g}{h^2}\right)$, where $f,h,h' \in H^0(Q^3,\mathcal O_{Q^3}(1))$ and $g \in H^0 (Q^3,\mathcal O_{Q^3}(2))$.  In the former case, the general leaf is not
algebraic while in the latter case the general leaf is an element of $|\mathcal O_{Q^3}(2)|$. In neither cases the foliation leaves an irreducible element of $|\mathcal O_{Q^3}(3)|$
invariant.
We conclude that $\F$ does not belong to $\Rat(1,2)$ nor to $\Log(1,1,1)$.
\end{example}

\begin{thm}\label{T:Quadricas}
The irreducible components of space of codimension one foliations with $K\mathcal F=0$ on the hyperquadric  $Q^3$ are
 $\Rat(2,1)$,  $\Log(1,1,1)$, and $\Aff$ (the general element is conjugated to the foliation presented in Example \ref{E:affQ}).
\end{thm}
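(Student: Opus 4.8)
The plan is to run the dichotomy of Theorem \ref{T:rr} and, for each branch, identify which of the three candidate components the foliation belongs to. Since $Q^3$ has $\Pic(Q^3)=\mathbb Z$, index $3$, and $c_1(TQ^3)^2\cdot H>0$, the hypotheses of all the preparatory results are met. By Theorem \ref{T:rr}, since $Q^3\neq\mathbb P^3$, every codimension one foliation with $K\mathcal F=0$ on $Q^3$ is semi-stable, and one of the three listed cases occurs: (i) $T\mathcal F=\mathcal O_{Q^3}\oplus\mathcal O_{Q^3}$ and $\mathcal F$ comes from an algebraic action; (ii) there is an algebraic $\mathbb C$- or $\mathbb C^*$-action with non-isolated fixed points tangent to $\mathcal F$; or (iii) $\mathcal F$ is defined by a closed rational $1$-form without divisorial components in its zero set. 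The closed-form case (iii) is handled by Lemma \ref{L:CLN} (note $H^0(Q^3,\Omega^1_{Q^3})=0$ as $Q^3$ is Fano) and the discussion in Section \ref{S:parametros}: such a foliation deforms inside some $\Log(d_1,\dots,d_k)$ with $\sum d_i=\deg N\mathcal F$. Since $K\mathcal F=0$ forces $N\mathcal F=KQ^3\otimes\mathcal O=\,-KQ^3=\mathcal O_{Q^3}(3)$, the only possibilities are $k=2$, $(d_1,d_2)=(1,2)$ giving $\Rat(1,2)$; or $k=3$, $(d_1,d_2,d_3)=(1,1,1)$ giving $\Log(1,1,1)$. (The degenerate partition $(d_1,d_2)=(2,1)$ coincides with $\Rat(1,2)$; partitions with a part of degree $\ge 3$ are impossible since $\sum d_i=3$ and $k\ge 2$, $k\ge 3$ respectively.)

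\textbf{The algebraic-action branches.} For cases (i) and (ii), the foliation is tangent to an algebraic action of a connected group $G$ of dimension one or two acting on $Q^3$. One now uses the classification of $Q^3$ as a nearly homogeneous variety: by work of Mukai--Umemura \cite{MukaiUmemura}, $\Aut(Q^3)=PO(5,\mathbb C)$, and—as recalled in Example \ref{E:affQ}—a distinguished copy of $PSL(2,\mathbb C)$ acts on $Q^3$ with a dense orbit. I would first argue that the generic leaf of $\mathcal F$ is a surface swept out by orbits of this (at most two-dimensional) group, and that, up to the action of $\Aut(Q^3)$, the only one-dimensional subgroups producing a codimension one foliation with trivial tangent bundle whose leaves are not fibers of a morphism are conjugate to the affine subgroup $\Aff(\mathbb C)\subset PSL(2,\mathbb C)$ of Example \ref{E:affQ}. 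Concretely: in case (i) a two-dimensional $T\mathcal F=\mathcal O\oplus\mathcal O$ corresponds to a rank-two abelian subalgebra of $H^0(Q^3,TQ^3)=\mathfrak{so}(5,\mathbb C)$ whose elements have common zero locus of codimension $\ge 2$; in case (ii) one has a one-dimensional action with non-isolated fixed points, and Lemma \ref{L:division} (whose cohomological hypothesis is automatic, $KQ^3\otimes K\mathcal G^{\otimes-2}\otimes N\mathcal F=\mathcal O_{Q^3}$ and $H^1(Q^3,\mathcal O_{Q^3})=0$) forces $T\mathcal F=\mathcal O\oplus\mathcal O$ unless the fixed locus is a divisor—but a divisorial fixed locus would give a divisorial component of $\sing\mathcal F$, contradiction. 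Either way one reduces to the two-dimensional abelian subalgebras of $\mathfrak{so}(5,\mathbb C)$, and a direct representation-theoretic analysis (Cartan subalgebra, unipotent subgroups, the $PSL(2,\mathbb C)$ inside $SO(5,\mathbb C)$) shows the resulting foliation is either a pencil of hyperplane sections (which lies in $\Rat(1,2)$ or $\Log(1,1,1)$ after a suitable choice, as it has $K\mathcal F\neq 0$ actually—so this case does not occur) or is conjugate to $\mathcal F$ of Example \ref{E:affQ}, which gives the component labeled $\Aff$.

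\textbf{Irreducibility and distinctness.} Finally one must check that each of $\Rat(1,2)$, $\Log(1,1,1)$, $\Aff$ is genuinely an irreducible component and that they are pairwise distinct. That $\Rat(1,2)$ and $\Log(1,1,1)$ are irreducible components follows from Calvo-Andrade's theorem \cite{Omegar} (the relevant stability hypotheses are checked directly as in \cite{CerveauLinsNetoAnnals}); their images are irreducible as images of irreducible parameter spaces under $\Phi$. For the $\Aff$ component: $\mathfrak{aut}(\mathcal F)$ is one-dimensional (generated by the defining vector field) so the orbit of $\mathcal F$ under $\Aut(Q^3)=PO(5,\mathbb C)$ has dimension $10-1$, and one computes the Zariski tangent space of $\Fol(Q^3,\mathcal O(3))$ at $\mathcal F$ to have the matching dimension $8$ stated in Table \ref{Table:1}, so this orbit closure is an irreducible component. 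Distinctness is exactly the content of the computation in Example \ref{E:affQ}: the foliation of Example \ref{E:affQ} leaves invariant the irreducible cubic surface $S=$ tangent surface of $\Gamma_4$ but no element of $\Rat(1,2)$ or $\Log(1,1,1)$ leaves an irreducible element of $|\mathcal O_{Q^3}(3)|$ invariant; and $\Rat(1,2)$ is separated from $\Log(1,1,1)$ since the general leaf of a $\Rat(1,2)$ foliation is algebraic (a fiber of $(f:g^2)$) while the general leaf of a $\Log(1,1,1)$ foliation is transcendental. \textbf{The main obstacle} I anticipate is the second paragraph: ruling out, among the rank-one and rank-two subalgebras of $\mathfrak{so}(5,\mathbb C)$, every configuration other than the affine one—this requires a careful case analysis of which invariant foliations have $K\mathcal F=0$, no divisorial fixed locus, and are not already accounted for by the rational/logarithmic components, and is where the specific geometry of $Q^3$ (rather than soft general arguments) really enters.
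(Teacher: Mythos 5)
Your overall skeleton matches the paper's: run Theorem \ref{T:rr}, dispose of the closed-form case via Lemma \ref{L:CLN} and the partition $3=1+2=1+1+1$, and analyze algebraic actions for the remaining cases. The closed-form branch and the distinctness argument (via the invariant cubic surface $S$ of Example \ref{E:affQ}) are essentially what the paper does.

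However, there is a genuine gap in your treatment of case (ii), and it is exactly where you flagged the ``main obstacle.'' You propose to apply Lemma \ref{L:division} to a $\mathbb C$- or $\mathbb C^*$-action with \emph{non-isolated} fixed points, arguing that either the fixed locus is a divisor (contradiction) or the lemma forces $T\mathcal F=\mathcal O\oplus\mathcal O$. But Lemma \ref{L:division} requires the vector field to have \emph{isolated} singularities --- that hypothesis is what makes the Koszul complex a resolution --- so it is unavailable precisely in case (ii). Moreover your dichotomy omits the main possibility: a fixed locus that is a curve (positive-dimensional but not divisorial), which does not produce a divisorial component of $\sing\mathcal F$ and does not reduce to case (i). In this situation $\mathcal F$ is merely \emph{tangent} to the one-dimensional action and need not be homogeneous under any two-dimensional group, so the reduction to ``two-dimensional abelian subalgebras of $\mathfrak{so}(5,\mathbb C)$'' fails. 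The paper handles this case (Propositions \ref{P:QC*} and \ref{P:QC+}) by a different mechanism: it classifies the $\mathbb C^*$- and $\mathbb C$-actions on $Q^3$ with non-isolated fixed points up to conjugacy, writes down the explicit quotient maps $\Phi_0:Q^3\dashrightarrow\mathbb P^2$ or $\mathbb P^1\times\mathbb P^1$, shows $\mathcal F=\Phi_0^*\mathcal G$ with $\mathcal G$ of degree one on $\mathbb P^2$ (or of bidegree $(2,2)$ leaving a $(1,1)$-curve invariant), and concludes that $\mathcal G$, hence $\mathcal F$, is given by a closed rational $1$-form. Without this quotient analysis your case (ii) is unproved. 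Two smaller points: in case (i) the paper's third proposition also reduces most two-dimensional subalgebras back to Propositions \ref{P:QC*}--\ref{P:QC+} (via Cartan subalgebras, centralizers of regular nilpotents, and the relation $[x,y]=y$), leaving only the single affine configuration of Example \ref{E:affQ}; and your dimension count for the $\Aff$ component is internally inconsistent, since $\mathfrak{fix}(\mathcal F)$ already contains the two-dimensional affine algebra, so the $\Aut(Q^3)$-orbit has dimension $10-2=8$, not $10-1$.
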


 Theorem \ref{T:Quadricas} follows from Theorem \ref{T:rr} combined with  the next three propositions and Lemma \ref{L:CLN}.

\begin{prop}\label{P:QC*}
Let $\mathcal F$ be a codimension one foliation on $Q^3$ with $K \mathcal F=0$. If there exists an
algebraic $\mathbb C^*$-action with non-isolated fixed points that is tangent to $\mathcal F$ then $\mathcal F$ is given by a closed rational $1$-form
without divisorial components in its zero set.
\end{prop}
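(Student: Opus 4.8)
The plan is to make the hypothesis completely explicit by classifying the $\C^*$-actions on $Q^3$ with non-isolated fixed locus, and then analyzing the foliations tangent to each of them.

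\emph{Set-up and classification of the actions.} Let $v\in H^0(Q^3,TQ^3)$ generate the given $\C^*$-action, and let $\omega$ be a twisted $1$-form defining $\F$. Since $i_v\omega=0$, the foliation $\G$ by orbits of $v$ is contained in $\F$, and (because $v$ is everywhere tangent to the leaves) the flow of $v$ preserves $\F$; also $K\F=0$ amounts to $N\F=\mathcal O_{Q^3}(3)=-K_{Q^3}$. As $\Aut(Q^3)=\mathrm{PSO}(5,\C)$, up to conjugacy the $\C^*$ is a diagonal one-parameter subgroup with weights $(a_0,\dots,a_4)$ on $\C^5$; nondegeneracy of the invariant quadratic form forces the nonzero weights to occur in pairs $\{a,-a\}$ of equal multiplicity. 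Inspecting the fixed locus on $Q^3$, it is one-dimensional (and never contains a divisor) in exactly two cases up to conjugacy: weights $(1,-1,0,0,0)$, with $Q^3=\{x_0x_1=q(x_2,x_3,x_4)\}$ and $q$ a nondegenerate conic; and weights $(1,1,-1,-1,0)$, with $Q^3=\{x_0x_2+x_1x_3+x_4^2=0\}$. In every other case the fixed locus is finite, so these two cases cover the hypothesis.

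\emph{Reduction to the quotient surface.} In the first case the generic $\C^*$-orbit closure is a fibre of $\pi=[x_2:x_3:x_4]\colon Q^3\dashrightarrow\P^2$; in the second it is a fibre of $\sigma=([x_0:x_1],[x_2:x_3])\colon Q^3\dashrightarrow\P^1\times\P^1$. Either way $\G\subset\F$ forces the leaves of $\F$ to be saturated by the fibres of the quotient map, so $\F$ is the pull-back of a foliation by curves $\H$ on the quotient surface. In the $\P^2$ case, writing a degree $d$ foliation on $\P^2$ as a homogeneous $1$-form in $x_2,x_3,x_4$ and pulling it back, one checks $N\F=\mathcal O_{Q^3}(d+2)$ and that the resulting form vanishes only along $\pi^{-1}(\sing(\H))$ together with the fixed conic (both of codimension two); hence $K\F=0$ forces $d=1$, and degree-one foliations on $\P^2$, being induced by linear vector fields on $\C^3$, are defined by closed rational $1$-forms without divisorial zeros (this follows from Jouanolou's classification \cite{EPA}). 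So $\F=\pi^*\H$ is of the required form. In the $\P^1\times\P^1$ case the analogous bidegree computation — now incorporating the twist along the hyperplane section $\{x_4=0\}$, which $\sigma$ contracts to the $(1,1)$-curve cut out by the bilinear form of $Q^3$ — forces $\H$ to be generated by a global vector field whose flow preserves that $(1,1)$-curve; such an $\H$ has a rational first integral (its leaves form a pencil of $(1,1)$-curves), so $\F$ has a rational first integral and is given by a closed rational (indeed essentially logarithmic, of the shape $\tfrac{dx_0}{x_0}+\tfrac{dx_2}{x_2}-2\tfrac{dx_4}{x_4}$) $1$-form without divisorial components in its zero set; the rulings and the remaining pencils of $(1,1)$-curves are excluded because they give $N\F=\mathcal O_{Q^3}(2)$ or $\mathcal O_{Q^3}(4)$.

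\emph{Main obstacle.} The delicate step is the normal-bundle bookkeeping in the $\P^1\times\P^1$ case: $\sigma$ has one-dimensional indeterminacy (along the two $\C^*$-fixed lines), so $N(\sigma^*\H)$ is not simply $\sigma^*N\H$ but is corrected by a twist supported on $\{x_4=0\}$, and pinning down precisely which $\H$ yield $N\F=\mathcal O_{Q^3}(3)$ requires resolving $\sigma$ — or, equivalently, a direct computation with the defining $1$-forms, observing that the naive pull-back is divisible by $x_4$. A minor additional point is to verify that the non-semisimple degree-one foliations on $\P^2$ (those given by nilpotent or mixed linear vector fields) are still defined by closed rational $1$-forms whose zero set has no divisorial component.
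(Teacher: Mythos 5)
Your proposal is correct and follows essentially the same route as the paper: the same two conjugacy classes of $\mathbb C^*$-actions with non-isolated fixed locus, the same quotient maps to $\mathbb P^2$ and $\mathbb P^1\times\mathbb P^1$ (the paper realizes them via Veronese/quadric cones, but they are the same rational maps on $Q^3$), the same normal-bundle bookkeeping including the crucial twist by the contracted divisor mapping to the $(1,1)$-curve, and the same endgame identifying the foliation as a pencil of $(1,1)$-curves through that curve and exhibiting the logarithmic form. The paper pins down the second case by noting that every foliation with $N\mathcal G=\mathcal O(2,2)$ on $\mathbb P^1\times\mathbb P^1$ is already given by a closed form $\pi_1^*\omega_1+\pi_2^*\omega_2$, which is a marginally cleaner way to finish than your vector-field argument, but the content is the same.
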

\begin{proof}
We can assume that $Q^3 \subset \mathbb P^4$ is given by the equation $\{ x_0^2 + x_1 x_2 + x_3 x_4 =0 \}$
and that $\mathbb C^* \subset \Aut(Q^3)$ is  a subgroup of the form
\[ \varphi_{\lambda}(x_0: x_1: x_2: x_3: x_4) = (x_0: \lambda^a x_1: \lambda^{-a} x_2 : \mu^b x_3: \mu^{-b} x_4),,\]
with  $a,b \in \mathbb N$ relatively prime,  since $\Aut(Q^3) = \mathbb P O(5, \mathbb C)$ has rank two.
If  $a$ and $b$ are distinct non-zero natural numbers then the fixed points of the action are isolated. Thus we have to analyze
only two cases: $(a,b)=(0,1)$ and $(a,b)=(1,1)$.

Let us start with the case $(a,b)=(0,1)$.  Consider the rational map
\begin{align*}
\Phi : \mathbb P^4 & \dashrightarrow \mathbb P( 1,1,1,2) \subset \mathbb P^{ 6} \, \\
(x_0:x_1:x_2:x_3:x_4) &\mapsto (x_0^2: x_0x_1 : x_0x_2 : x_1^2: x_1x_2: x_2^2:  x_3x_4 )  \, ,
\end{align*}
which identifies $\mathbb P( 1,1,1,2)$ with a cone over the Veronese surface in $\mathbb P^5$.
Notice that the quadric $Q^3$ is mapped to a hyperplane section of $\mathbb P(1,1,1,2)$ not passing
through the vertex $(0:0:0:0:0:0:1)$, which is of course isomorphic to $\mathbb P^2$.
We will  denote by $\Phi_0$ the induced rational map $\Phi_0: Q^3 \dashrightarrow \mathbb P^2$.
The general fiber of $\Phi_0$ is an  orbit of $\varphi$, and therefore the foliation $\mathcal F$
must be the pull-back of a foliation $\mathcal H$ on $\mathbb P^2$. Notice also that $\Phi_0^* \mathcal O_{\mathbb P^2}(1)$
is equal to $\mathcal O_{Q^3}(1)$. A simple computation shows that  the critical set of $\Phi_0$ has codimension greater than two.
Thus $\mathcal O_{Q^3}(3) = N \mathcal F = \Phi_0^* N \mathcal H$. It follows that $N\mathcal H= \mathcal O_{\mathbb P^2}(3)$, i.e.,
$\mathcal H$ has degree one. Since every foliation of degree one on $\mathbb P^2$ is induced by a closed meromorphic $1$-form
with isolated singularities \cite[Chapter 1, Section 2]{EPA} the proposition follows in this case.

Suppose now that $(a,b)=1$, and consider the rational map
\begin{align*}
\Phi : \mathbb P^4 & \dashrightarrow  \mathbb P^{4} \, \\
(x_0:x_1:x_2:x_3:x_4) &\mapsto (x_0^2: x_1x_2 : x_1x_4 :  x_2x_3: x_3x_4 )  \, .
\end{align*}
Its image is contained in a cone over a smooth quadric surface in $\mathbb P^3$. The quadric $Q^3$ is mapped
into a smooth hyperplane section of this cone which is isomorphic to $\mathbb P^1 \times \mathbb P^1$.
If we denote by $\Phi_0 : Q^3 \dashrightarrow \mathbb P^1 \times \mathbb P^1$ the induced rational map then
$\Phi_0^* \mathcal O_{\mathbb P^1 \times \mathbb P^1}(c,d) = \mathcal O_{Q^3}( c+ d)$. The only divisorial component of the critical set of $\Phi_0$ is the intersection
of the hyperplane $\{x_0=0\}$ with $Q^3$. The image of this critical set is a $(1,1)$ curve $C$
in $\mathbb P^1 \times \mathbb P^1$.
If $\mathcal G$ is a foliation on $\mathbb P^1 \times \mathbb P^1$ with
normal bundle $\mathcal N \mathcal G = \mathcal O_{\mathbb P^1\times \mathbb P^1} (c,d)$ then
\[
N \Phi_0^* \mathcal G = \left\{ \begin{array}{lcl}
\mathcal O_{Q^3} ( c+ d )&\text{ if }& \text{ $C$ is not $\mathcal G$-invariant }\\
\mathcal O_{Q^3} (c+d -1 )&\text{ if }& \text{ $C$  is  $\mathcal G$-invariant }.
                    \end{array} \right.
\]
Therefore if $\mathcal F = \Phi_0^* \mathcal G$ and $N \mathcal F= \mathcal O_{Q^3}(3)$ then
$c=d=2$ and $C$ is $\mathcal G$-invariant. A foliation $\mathcal G$ on $\mathbb P^1 \times \mathbb P^1$
with $N \mathcal G = \mathcal O_{\mathbb P^1 \times \mathbb P^1}(2,2)$ is given by a closed rational $1$-form
$\omega = \pi_1^* \omega_1 + \pi_2^*\omega_2$  where $\pi_1,\pi_2: \mathbb P^1 \times \mathbb P^1 \to \mathbb P^1$
are the natural projections and the $1$-forms $\omega_i$ have polar set of degree two. Since the $(1,1)$-curve
$C$ is $\mathcal G$-invariant, we must have $\omega_1 = - \omega_2 = dx_0/x_0 - dx_1/x_1$ in a suitable choice of coordinates
where $C = \{ x_0y_1 - y_0 x_1= 0 \}$. Therefore
\[
\omega = \frac{dx_0}{x_0} - \frac{dx_1}{x_1} - \frac{dy_0}{y_0} + \frac{dy_1}{y_1} \,  .
\]
Notice that $\omega$ is proportional to
\[
\alpha = \left( \frac{d (x_0y_1 - y_0x_1)}{x_0y_1 - y_0x_1} - \frac{dx_0 }{x_0} - \frac{dy_1}{y_1} \right) .
\]
and the pull-back of $\alpha$ under $\Phi_0$ is  closed $1$-form without divisorial components in its zero set.
\end{proof}

\begin{prop}\label{P:QC+}
Let $\mathcal F$ be a codimension one foliation on $Q^3$ with $K \mathcal F=0$. If $\mathcal F$ is tangent to an
algebraic $\mathbb C$-action with non-isolated fixed points then $\mathcal F$ is given by a closed rational $1$-form
without divisorial components in its zero set.
\end{prop}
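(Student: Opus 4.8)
The plan is to mirror the proof of Proposition~\ref{P:QC*}. Since $\Aut(Q^3)=\mathbb P O(5,\mathbb C)$, every algebraic $\mathbb C$-action on $Q^3$ is of the form $t\mapsto\exp(tN)$ for a nonzero nilpotent $N\in\mathfrak{so}(5,\mathbb C)$, and up to conjugacy there are exactly three such $N$, with Jordan types $[5]$, $[3,1,1]$ and $[2,2,1]$ on $\mathbb C^5$. The fixed locus of the action on $Q^3$ equals $\mathbb P(\ker N)\cap Q^3$; restricting the quadratic form to $\ker N$ one checks that it is an isolated point for $[5]$, a pair of lines for $[3,1,1]$, and a non-reduced line for $[2,2,1]$. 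Hence the hypothesis of the proposition rules out the type $[5]$, and it is enough to treat the two remaining types, for which we choose explicit representatives.

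Writing $Q^3=\{x_0^2+x_1x_2+x_3x_4=0\}\subset\mathbb P^4$ as in Proposition~\ref{P:QC*}, one may take the $[2,2,1]$-action to be $\varphi_t(x)=(x_0:x_1:x_2-tx_4:x_3+tx_1:x_4)$ and the $[3,1,1]$-action to be $\psi_t(x)=(x_0:x_1:x_2+t(x_3-x_4)+t^2x_1:x_3+tx_1:x_4-tx_1)$; these preserve the defining equation of $Q^3$, and their infinitesimal generators are nilpotent of the stated types. Set $\Phi_0=[x_0:x_1:x_4]:Q^3\dashrightarrow\mathbb P^2$ in the first case and $\Phi_0=[x_0:x_1:x_3+x_4]:Q^3\dashrightarrow\mathbb P^2$ in the second. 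In each case $\Phi_0$ is the restriction to $Q^3$ of a linear projection from a line, it satisfies $\Phi_0^*\mathcal O_{\mathbb P^2}(1)=\mathcal O_{Q^3}(1)$, and — using the equation of $Q^3$ — its general fibre is an orbit of the corresponding action. Inspecting the three standard affine charts of the target one checks that the indeterminacy locus of $\Phi_0$ has codimension at least two, that $\Phi_0$ contracts no divisor of $Q^3$, and that $\Phi_0$ is a submersion outside a locus of codimension at least two: indeed $\Phi_0$ is a submersion on all of its domain in the first case, and it fails to be a submersion only along the two lines forming the fixed locus in the second. In particular $\Phi_0$ has no ramification divisor.

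Because $\mathcal F$ is tangent to the $\mathbb C$-action, its leaves are invariant under the flow of the action, hence are unions of fibres of $\Phi_0$; as the general fibre of $\Phi_0$ is irreducible, $\mathcal F=\Phi_0^*\mathcal H$ for a codimension one foliation $\mathcal H$ on $\mathbb P^2$. Since $\Phi_0$ is a submersion off a codimension $\ge2$ set, $N\mathcal F=\Phi_0^*N\mathcal H$, and as $N\mathcal F=\mathcal O_{Q^3}(3)$ this forces $N\mathcal H=\mathcal O_{\mathbb P^2}(3)$, so $\mathcal H$ has degree one. By Jouanolou's description of degree one foliations on $\mathbb P^2$ \cite[Chapter 1, Section 2]{EPA}, the foliation $\mathcal H$ is defined by a closed rational $1$-form without divisorial components in its zero set, and pulling this form back by $\Phi_0$, which has no ramification divisor, yields a closed rational $1$-form defining $\mathcal F$ whose zero set is again of codimension at least two, as required.

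The point requiring the most care — exactly as in Proposition~\ref{P:QC*} — is the verification that $\Phi_0$ has no ramification divisor. This is transparent from the projection description: $\Phi_0$ is the restriction to $Q^3$ of the linear projection from a line $\ell$ with $\ell\cap Q^3$ equal either to a single (possibly non-reduced) point or to the line $\ell$ itself (which then lies on $Q^3$); consequently every plane section of $Q^3$ occurring as a fibre of $\Phi_0$ is a conic singular at most along $\ell\cap Q^3$, and $\ell\cap Q^3$ is precisely the locus deleted to obtain that fibre, so no fibre of $\Phi_0$ acquires a singular point and the failure of submersivity is confined to the codimension two fixed locus of the action.
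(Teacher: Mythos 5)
Your proof is correct and follows essentially the same strategy as the paper's: reduce to the two nilpotent conjugacy classes with non-isolated fixed points, exhibit the quotient of the action as a rational map $\Phi_0\colon Q^3\dashrightarrow\mathbb{P}^2$ with $\Phi_0^*\mathcal{O}_{\mathbb{P}^2}(1)=\mathcal{O}_{Q^3}(1)$ and critical locus of codimension at least two, and conclude that $\mathcal{F}$ is the pull-back of a degree one foliation on $\mathbb{P}^2$, hence given by a closed rational $1$-form without divisorial components in its zero set. The only difference is cosmetic: you write the quotient maps as linear projections in the invariant coordinates on a single quadric model, while the paper uses quadratic invariants mapping onto a hyperplane section of a cone over the Veronese surface; the substance of the verification is the same.
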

\begin{proof}
Let $\varphi : \mathbb C \times Q^3 \to Q^3$ be an algebraic $\mathbb C$-action. As such, it must be of the form
$\varphi(t)= \exp(t \cdot n)$ where $n$ is a nilpotent element of the Lie algebra $\mathfrak{aut}(Q^3) = \mathfrak{so}(5, \mathbb C)$. In $\mathfrak{so}(5,\mathbb C)$
there are exactly three  $\Aut(Q^3) = \mathbb P O(5,\mathbb C)$-conjugacy classes of non-zero nilpotent elements. The Jordan normal forms
of the corresponding matrices in $End(\mathbb C^5)$ have: (1) only one Jordan block of order $5$; (2) one Jordan block of order $3$
and two trivial (order one) Jordan blocks ; or (3) two Jordan blocks of order $2$ and one trivial Jordan block.

The action in case (1) has isolated fixed points and  is excluded by hypothesis. To deal with case (2) we can assume that $n = x_1 \frac{\partial }{\partial x_0} + x_2 \frac{\partial }{\partial x_1}$
and that the quadric $Q^3$ is $\{ x_1 ^2 - 2x_0 x_2 + x_3^2 + x_4^2 =0 \}$. The general fiber of the rational map
\begin{align*}
\Phi : \mathbb P^4 & \dashrightarrow \mathbb P^6 \, \\
(x_0 : x_1  : x_2: x_3: x_4 ) & \mapsto ( x_1^2 - 2x_0 x_2 : x_2^2 : x_2x_3: x_2x_4: x_3^2: x_3x_4: x_4^2 )
\end{align*}
coincides with an orbit of $\varphi$, and  sends $\mathbb P^4$ to a cone over the second Veronese embedding of $\mathbb P^2$.
The image of the quadric $Q^3$ avoids the vertex of this cone and is isomorphic to $\mathbb P^2$. Moreover, the critical set of
$\Phi_0 : Q \dashrightarrow \mathbb P^2$ (the restriction of $\Phi$ to $Q$) has no divisorial components. Therefore
every foliation $\mathcal F$ on $Q^3$ tangent to $\varphi$ is of the form $\Phi_0^* \mathcal G$  for some foliation
on $\mathbb P^2$ and its normal bundle satisfies $N \mathcal F = \Phi_0^* N \mathcal G$. Since $\Phi_0^* \mathcal O_{\mathbb P^2}(1) = \mathcal O_Q(1)$,
it follows that $\mathcal F$ is the pull-back of a foliation $\mathcal G$ on $\mathbb P^2$ of degree one and, as such, is given
by a  closed $1$-form without zeros of codimension one \cite[Chapter 1, Section 2]{EPA}.

Case (3) is very similar to case (2). Now the vector field $n$ is of the form $x_1 \frac{\partial}{\partial x_0} + x_3 \frac{\partial}{\partial x_2}$,
the quadric is $Q=\{ x_0x_3 - x_1x_2 + x_4^2 =0 \}$ and
the quotient map is
\begin{align*}
\Phi : \mathbb P^4 & \dashrightarrow \mathbb P^6 \, \\
(x_0 : x_1  : x_2: x_3: x_4 ) & \mapsto (  x_0x_3 - x_1x_2 : x_1^2 : x_1x_3: x_1x_4: x_3^2: x_3x_4: x_4^2 ) \, .
\end{align*}
The restriction of $\Phi$ to $Q$ has critical set of codimension at least two, and therefore the conclusion is the same: $\mathcal F$
is the pull-back under $\Phi_{|Q}$ of a foliation on $\mathbb P^2$ of degree one, and as
such is defined by a closed rational $1$-form with zeros of codimension at least two.
\end{proof}

\begin{prop}
Let $\mathcal F$ be a codimension one foliation on $Q^3$ with trivial canonical bundle.
Suppose that $\mathcal F$ is induced by an algebraic action of a two dimensional  Lie subgroup
of $\Aut(Q^3)$. Then $\mathcal F$ is defined by a closed $1$-form without zeros of codimension one, or $\mathcal F$ is conjugated to
the foliation presented in Example \ref{E:affQ}.
\end{prop}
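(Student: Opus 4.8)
The plan is to study the Lie algebra $\mathfrak g\subset\mathfrak{aut}(Q^3)=\mathfrak{so}(5,\C)$ of the acting two‑dimensional group $G$. Since $\mathcal F$ is codimension one the generic $G$‑orbit is a surface equal to the generic leaf of $\mathcal F$, so for a basis $v_1,v_2$ of $\mathfrak g$ the section $v_1\wedge v_2$ of $\det T\mathcal F=K\mathcal F^{*}=\mathcal O_{Q^3}$ is a nonzero constant; hence $\mathcal O_{Q^3}^{\oplus 2}\to T\mathcal F$ is an isomorphism off $\sing(\mathcal F)$ and, $T\mathcal F$ being reflexive, $T\mathcal F\cong\mathcal O_{Q^3}^{\oplus 2}$, so $\mathfrak{fix}(\mathcal F)=H^0(Q^3,T\mathcal F)$ is two‑dimensional and contains $\mathfrak g$. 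I would first replace $G$ by its Zariski closure $\overline G$ in $\Aut(Q^3)$: it still preserves $\mathcal F$, so $\mathrm{Lie}(\overline G)\subseteq\mathfrak{aut}(\mathcal F)$, and if $\dim\overline G\ge 3$ then $\dim\mathfrak{aut}(\mathcal F)>2=\dim\mathfrak{fix}(\mathcal F)$, so $\mathfrak{fix}(\mathcal F)\neq\mathfrak{aut}(\mathcal F)$ and Lemma \ref{L:campos}(2) already gives that $\mathcal F$ is defined by a closed rational $1$‑form without divisorial components in its zero set. Thus I may assume $\overline G$ is a two‑dimensional algebraic group and $\mathfrak g=\mathrm{Lie}(\overline G)$ is an algebraic Lie algebra, in particular closed under Jordan decomposition.

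The next step is a reduction to the two preceding propositions. If $\mathfrak g$ contains a nilpotent $n$ that is not regular (Jordan type $[3,1,1]$ or $[2,2,1]$ on $\C^5$), then, exactly as in the case analysis of the proof of Proposition \ref{P:QC+}, the subgroup $\exp(\C n)\subseteq\overline G$ has non‑isolated fixed locus on $Q^3$ and is tangent to $\mathcal F$, so Proposition \ref{P:QC+} applies. Likewise, if $\mathfrak g$ contains a semisimple element whose eigenvalues on $\C^5$ are proportional to $(0,1,-1,0,0)$ or to $(1,-1,1,-1,0)$ — precisely the semisimple elements whose fixed locus meets $Q^3$ in positive dimension, compare the two cases in the proof of Proposition \ref{P:QC*} — then rescaling it inside the vector space $\mathfrak g$ to an integral element yields an algebraic $\C^{*}$ in $\overline G$ tangent to $\mathcal F$, and Proposition \ref{P:QC*} applies. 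In both situations $\mathcal F$ is defined by a closed rational $1$‑form without divisorial components in its zero set, so from now on I assume $\mathfrak g$ contains no such element.

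Now I would dispose of the abelian case. Writing the algebraic abelian algebra $\mathfrak g$ as the sum of its toral part and its nilpotent part, if the nilpotent part is nonzero it contains a nilpotent $n$, which by the reduction is regular; then $\mathfrak g\subseteq\mathfrak z(n)$ and $\dim\mathfrak z(n)=\rank\mathfrak{so}(5,\C)=2$ force $\mathfrak g=\mathfrak z(n)$, which contains the highest root vector of $\mathfrak{so}(5,\C)$ — an element of the minimal, hence non‑regular, nilpotent orbit — contradicting the reduction. Hence $\mathfrak g$ is toral, i.e.\ a Cartan subalgebra of $\mathfrak{so}(5,\C)$, and then it contains the integral element with eigenvalues $(0,1,-1,0,0)$ on $\C^5$, again contradicting the reduction. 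So no abelian $\mathfrak g$ survives, and in the abelian case $\mathcal F$ is given by a closed rational $1$‑form without divisorial components in its zero set.

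It remains to treat $\mathfrak g\cong\mathfrak{aff}(\C)=\langle h,e\mid[h,e]=e\rangle$ with $e$ ad‑nilpotent, hence nilpotent, and $h$ semisimple; by the reduction $e$ is a regular nilpotent. A regular nilpotent lies in a unique Borel subalgebra, which must therefore contain the solvable algebra $\mathfrak g$; conjugating $h$ into the corresponding Cartan $\mathfrak t$, the relation $[h,e]=e$ reads $\alpha_1(h)=\alpha_2(h)=1$ on the two simple roots, which pins down $h=h_0/2$ for the principal $\mathfrak{sl}(2,\C)$‑triple $(e,h_0,f)$, so that $\mathfrak g$ is, up to conjugacy in $\mathbb P O(5,\C)$, the Borel subalgebra of the principal $\mathfrak{sl}(2,\C)\subset\mathfrak{so}(5,\C)$. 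Since the $5$‑dimensional representation of this $\mathfrak{sl}(2,\C)$ is $\Sym^{4}$ of the standard one, the corresponding subgroup of $\Aut(Q^3)$ is the copy of $\Aff(\C)\subset PSL(2,\C)$ acting on $Q^3=\overline{PSL(2,\C)\cdot p_0}$ in Example \ref{E:affQ}, and $\mathcal F$ is the foliation described there. The step I expect to be most delicate is the reduction: pinning down exactly which one‑parameter subgroups of $\Aut(Q^3)$ have positive‑dimensional fixed locus on $Q^3$ so that Propositions \ref{P:QC*} and \ref{P:QC+} genuinely apply, and checking that after passing to the algebraic hull the only surviving two‑dimensional subalgebra of $\mathfrak{so}(5,\C)$ is the principal affine one.
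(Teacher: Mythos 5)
Your proposal is correct and follows essentially the same strategy as the paper: reduce to Propositions \ref{P:QC*} and \ref{P:QC+} by locating one-parameter subgroups with non-isolated fixed points inside $\mathfrak g$, dispose of the abelian case via Cartan subalgebras and the (purely nilpotent, two-dimensional) centralizer of a regular nilpotent, and show the only surviving non-abelian algebra is, up to conjugacy, the Borel of the principal $\mathfrak{sl}(2,\C)$, which is the foliation of Example \ref{E:affQ}. Your use of abstract structure theory (unique Borel through a regular nilpotent, highest root vector in the minimal orbit, identification via $\Sym^4$) replaces the paper's explicit matrix computations in $\mathfrak{so}(q,\C)$, but the argument is the same in substance.
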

\begin{proof}
Let $G \subset \Aut(Q^3)$ be the subgroup defining $\mathcal F$, and $\mathfrak g \subset \mathfrak{so}(5,\mathbb C)$ the corresponding
Lie subalgebra. If $G$ is abelian then it must be
of the form $\mathbb C^*\times \mathbb C^*$, $\mathbb C \times \mathbb C^*$, or $\mathbb C \times \mathbb C$.
In the first case every element in $\mathfrak g$, the Lie algebra of $G$, is a semi-simple element of $\mathfrak{so}(5,\mathbb C)$.
Since the rank of  $\mathfrak{so}(5,\mathbb C)$ is two, $\mathfrak g$ is a Cartan subalgebra of $\mathfrak{so}(5,\mathbb C)$. Therefore,
we can find $\mathbb C^* \subset G$ inducing an algebraic action with non-isolated fixed points tangent to $\mathcal F$. We can apply Proposition \ref{P:QC*}
to conclude that $\mathcal F$ is induced by a  closed $1$-form without codimension one zeros. In the two remaining cases, $\mathfrak g$ contains a nilpotent
element $n$ which defines an algebraic subalgebra $\mathbb C \subset G$. If the corresponding action has non-isolated fixed
points then Proposition \ref{P:QC+} implies $\mathcal F$ is defined by a closed rational $1$-form without divisorial components in its zero set.

If the corresponding action has only isolated fixed points then we can assume that $Q$ is defined by the quadratic form $q= x_2^2 - 2 x_1 x_3 + 2x_0 x_4 $ and that
$n$, seen as an element of $\mathfrak{so}(q,\mathbb C)$,
has only one Jordan block of order $5$. The centralizer $C(n)$ of $n$ in $\mathfrak{so}(q,\mathbb C)$   is thus formed by nilpotent matrices of the form
\[
\left(
\begin{array}{ccccc}
  0 & \alpha & 0 &\beta&0\\
  0 & 0 & \alpha &0&\beta\\
  0 & 0 & 0& \alpha &0\\
  0 & 0 & 0&0& \alpha \\
      0 & 0 & 0&0& 0 \\
\end{array}
\right) \, .
\]
In particular, since $\mathfrak{g} \subset C(n)$, $\mathfrak{g}$ contains another nilpotent element which defines a $\mathbb C$-action with non-isolated
fixed points. Proposition \ref{P:QC+} implies $\mathcal F$ is defined by a  closed $1$-form without codimension one zeros.

\medskip

Suppose now that $G$ is not abelian. Its Lie algebra $\mathfrak{g}$ is isomorphic to the affine Lie algebra $\mathbb C x \oplus \mathbb C y$ with the
relation $[x,y]=y$. This relation implies that  $y$ is a nilpotent element of $\mathfrak{so}(5,\mathbb C)\subset \mathfrak{sl}({5,\mathbb C})$. As before,
using Proposition \ref{P:QC+}, we can reduce to the case where $y$ is in Jordan normal form and
has only one Jordan block of order $5$. The elements  $x \in \mathfrak{so}(q,\mathbb C)$ satisfying $[x,y]=y$ are of the form
\[
\left(
\begin{array}{ccccc}
  2 & \alpha &0&\beta&0\\
  0 & 1 & \alpha &0&\beta\\
  0 & 0 & 0& \alpha &0\\
  0 & 0 & 0&-1& \alpha \\
          0 & 0 & 0&0& -2 \\
\end{array}
\right) \, .
\]
After one last conjugation by an element of $SO(q,\mathbb C)$ we can suppose that $\beta=0$.  We have just proved that up to automorphisms of $Q^3$
 there is only one
foliation defined by an algebraic action of an algebraic subgroup $G \subset \Aut(Q^3)$ which is not invariant an algebraic action of a one-dimensional Lie group with non-isolated fixed points. Therefore
it must be  the foliation described in Example \ref{E:affQ}.
\end{proof}

\section{Foliations on projective spaces}\label{S:P3}
Let us recall  the main result of \cite{CerveauLinsNetoAnnals}.

\begin{thm}\label{T:CLN}
The irreducible components of the space of codimension one foliations of degree $2$ on $\mathbb P^n$, $n\ge 3$,  are
$\Rat(2,2)$,  $\Rat(1,3)$,  $\Log(1,1,1,1)$,  $\Log(2,1,1)$, $\PBL(2)$, and  $Exc(2)$.
\end{thm}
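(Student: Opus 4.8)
The plan is to separate the case $n=3$, which is genuinely deep and which I would only recall, from the case $n\ge 4$, which I would reduce to it. By Example \ref{E:Pn} and the adjunction formula a codimension one foliation of degree $d$ on $\mathbb P^n$ has normal bundle $\mathcal O_{\mathbb P^n}(d+2)$ and canonical bundle $\mathcal O_{\mathbb P^n}(d+1-n)$; thus for $n=3$ a degree two foliation has $K\mathcal F=\mathcal O_{\mathbb P^3}$, and Theorem \ref{T:CLN} in that case is verbatim the main theorem of \cite{CerveauLinsNetoAnnals}, which I would quote since reproducing its proof (Gauss map analysis, Dulac's classification of degree two centers, computer-assisted computations) is outside the scope of the paper. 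For $n\ge 4$ one has instead $K\mathcal F=\mathcal O_{\mathbb P^n}(3-n)<0$. I would argue by induction on $n$, the base case $n=3$ being Cerveau--Lins Neto, and in the inductive step I would split according to whether $T\mathcal F$ is semi-stable.

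If $T\mathcal F$ is semi-stable, then $\deg(\mathcal F)=2<n-1=\dim\mathcal F$, so Corollary \ref{C:boaintegral} applies and produces a rational first integral $(F^p:G^q)$ with $F,G$ homogeneous, $\gcd(p,q)=1$, $p\deg(F)=q\deg(G)$ and $\deg(F)+\deg(G)=\deg(\mathcal F)+2=4$. With $\deg(F),\deg(G)\ge 1$ the only possibilities are $(\deg F,\deg G)=(2,2)$ with $p=q=1$, in which case $\mathcal F$ is the pencil of quadrics $(F:G)$ and lies in $\Rat(2,2)$; and $(\deg F,\deg G)=(1,3)$ up to order, with $\{p,q\}=\{3,1\}$, in which case $\mathcal F$ is defined by $d(F^3/G)$ up to a multiplicative factor and lies in $\Rat(1,3)$. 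So in this case $\mathcal F$ lands in one of the two rational components.

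If $T\mathcal F$ is not semi-stable, then $\mu(T\mathcal F)=\tfrac{n-3}{n-1}\ge 0$, and the last paragraph of Example \ref{E:concreto} (equivalently Proposition \ref{P:unstable}) yields a foliation $\mathcal G\subsetneq\mathcal F$, necessarily of codimension $\ge 2$, with $\deg(\mathcal G)/\dim(\mathcal G)<2/(n-1)$; since $\dim(\mathcal G)\le n-1$ this forces $\deg(\mathcal G)\le 1$. If $\deg(\mathcal G)=0$, then by \cite[Th\'eor\`eme 3.8]{CerveauDeserti} the foliation $\mathcal G$ is the fibration by fibres of a linear projection $\pi\colon\mathbb P^n\dashrightarrow\mathbb P^m$ with $m=\codim\mathcal G<n$. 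If $\deg(\mathcal G)=1$, then the displayed inequality forces $\dim(\mathcal G)>(n-1)/2\ge 3/2$, hence $\dim(\mathcal G)\ge 2$, and the classification of degree one foliations of arbitrary codimension (Theorem \ref{T:deg1}) shows such a $\mathcal G$ is again a linear pull-back, so $T\mathcal G$, and therefore $T\mathcal F$, contains the relative tangent sheaf of a linear projection $\pi\colon\mathbb P^n\dashrightarrow\mathbb P^m$ with $m<n$. In either case $\mathcal F=\pi^*\mathcal F_0$ for a codimension one foliation $\mathcal F_0$ on $\mathbb P^m$ with $\deg(\mathcal F_0)=\deg(\mathcal F)=2$. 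If $m=2$ then $\mathcal F\in\PBL(2)$ by definition; if $m\ge 3$ then by the inductive hypothesis $\mathcal F_0$ lies in one of the six components of $\Fol(\mathbb P^m,2)$, and since linear pull-back carries $\Rat$ to $\Rat$, $\Log$ to $\Log$, $\PBL(2)$ to $\PBL(2)$ and, the composite of two linear projections being a linear projection, $Exc(2)$ to $Exc(2)$, the foliation $\mathcal F$ lies in the corresponding component of $\Fol(\mathbb P^n,2)$. This closes the induction.

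The principal obstacle is beyond the reach of these methods: the case $n=3$ is the celebrated theorem of \cite{CerveauLinsNetoAnnals} and nothing in this argument reproves it; what the section actually contributes is only the descent from $n\ge 4$ to $n=3$. Within that descent the two delicate points are (i) that an unstable degree two foliation is a genuine linear pull-back, which is where Theorem \ref{T:deg1} does the work and where the estimate $\dim(\mathcal G)\ge 2$ is needed to exclude the ``linear vector field'' degree one foliations that are not pull-backs; and (ii) that linear pull-back respects each of the six components, which is transparent for the rational, logarithmic and $\PBL(2)$ families and is a tautology for $Exc(2)$ once the exceptional component on $\mathbb P^n$ is defined as the linear pull-back of the exceptional component on $\mathbb P^3$.
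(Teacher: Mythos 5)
Your overall strategy coincides with the paper's: quote Cerveau--Lins Neto for $n=3$, then induct on $n$, splitting on semi-stability of $T\F$, treating the semi-stable case via Corollary \ref{C:boaintegral} (which correctly yields $\Rat(2,2)$ and $\Rat(1,3)$) and the unstable case via the maximal destabilizing foliation $\G$ with $\deg(\G)\le 1$. The sub-case $\deg(\G)=0$ is also handled as in the paper. But there is a genuine error in the sub-case $\deg(\G)=1$: you assert that Theorem \ref{T:deg1} forces a degree-one foliation of dimension $\ge 2$ to be a linear pull-back. That theorem has \emph{two} alternatives, and the one you invoke (linear pull-back of a foliation induced by a vector field on $\mathbb P^{q+1}$) is precisely the one that \emph{cannot} occur here: such a pull-back of dimension $\ge 2$ contains the degree-zero foliation by fibres of the projection, which has slope $1>\mu(T\G)$, so it is never semi-stable, whereas $\G$, being the maximal destabilizing subsheaf, is semi-stable by construction. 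Hence $\G$ falls into the other alternative of Theorem \ref{T:deg1}: it is defined by a rational map to $\mathbb P(1^q,2)$ given by $q$ linear forms and a quadric, whose fibres are quadrics rather than linear spaces. Consequently $\F$ is \emph{not} a linear pull-back in this sub-case; the correct conclusion, and the one the paper draws, is that $\F$ lies in $\Log(1,1,2)$.

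This is not a repairable slip of wording, because your dichotomy then fails to be exhaustive. The general element of $\Log(1,1,2)$ on $\mathbb P^n$, $n\ge 4$, defined by $H_1H_2F\left(\lambda_1\frac{dH_1}{H_1}+\lambda_2\frac{dH_2}{H_2}+\lambda_3\frac{dF}{F}\right)$ with $F$ a smooth quadric and generic $\lambda_i$, is destabilized by the degree-one foliation cut out by $(H_1,H_2,F)$, so it is not semi-stable; and it is not a pull-back under any linear projection, since every leaf of such a pull-back is a cone over the centre of projection, which fails for generic $H_1,H_2,F$. So these foliations are placed by neither branch of your argument, an entire irreducible component is unaccounted for, and the induction does not close. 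The repair is the paper's: in the $\deg(\G)=1$ case use the semi-stability of $\G$ to put it in the $\mathbb P(1^q,2)$ alternative of Theorem \ref{T:deg1} and deduce directly that $\F\in\Log(1,1,2)$, with no appeal to the inductive hypothesis in that branch.
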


For every $d\ge 2$ the elements of $\PBL(d)$ are pull-backs of degree $d$ foliations
on $\mathbb P^2$ under linear projections $\pi : \mathbb P^n \dashrightarrow \mathbb P^2$.
The general element
of $Exc(2)$ is a  pull-back under a linear projection $\P^n\dashrightarrow \P ^3$ of the foliation on $\P^3\simeq \P \C_3[x]$   induced by the natural action of the affine group $\mathrm{Aff}(\C)$ on the projectivization of space of polynomials of degree at most three.
For a detailed description of the general element of $Exc(2)$ on $\P^3$ see \cite{jinv}.

Following essentially the same steps as used to proof Theorem \ref{T:Quadricas} one can recover Theorem \ref{T:CLN} for $n=3$  without using Dulac's classification
of quadratic centers \cite{Dulac} (see also \cite[Theorem 7]{CerveauLinsNetoAnnals}) and bypassing the computer-assisted calculations used to prove
\cite[Theorem E']{CerveauLinsNetoAnnals}. Nevertheless, to establish analogues of Propositions \ref{P:QC*}, and \ref{P:QC+} following the same strategy as above, one would
be lead to a lengthy  case-by-case analysis which we have chosen to not carry out  in detail here, but which can be found in version 2 of \cite{LPT} at the  arXiv. Instead, we will present below a proof of Theorem \ref{T:CLN} for $n>3$ assuming that
it holds true for $n=3$, as it may serve as a model to extend the results of the previous section to higher dimensional hyperquadrics. We start with the  classification of degree one foliations of arbitrary codimension on $\P^n$, a result
of independent interest which is used in  \cite{AD}.

\subsection{The space of foliations on $\mathbb P^n$ of degree one (arbitrary codimension)}\label{S:deg1}
We already recalled the classification of the foliations of degree zero in the proof of Proposition \ref{P:verynegative}: a codimension $q$  foliation
of degree zero on $\mathbb P^n$ is defined by a linear projection from $\mathbb P^n$ to $\mathbb P^q$.
The classification of foliations of degree one can be easily deduce from Medeiros classification of locally decomposable
integrable homogeneous $q$-forms of degree one (\cite[Theorem A]{Airton}) as we show below.

\begin{thm}\label{T:deg1}
If  $\mathcal F$ be a foliation of degree $1$ and codimension $q$ on $\mathbb P^n$ then we are in one of following cases:
\begin{enumerate}
\item  $\mathcal F$ is defined a dominant rational map $\mathbb P^n \dashrightarrow \mathbb P(1^{q}, 2)$ with irreducible general fiber
determined by  $q$   linear forms and one quadratic form; or
\item  $\mathcal F$ is the linear pull back of a foliation of induced by a global holomorphic vector field on $\mathbb P^{q+1}$.
\end{enumerate}
\end{thm}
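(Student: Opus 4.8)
The plan is to derive the classification from Medeiros' description \cite[Theorem A]{Airton} of locally decomposable integrable homogeneous $q$-forms, after translating the problem into the homogeneous picture. By Example \ref{E:Pn}, a codimension $q$ foliation $\mathcal F$ of degree $1$ on $\mathbb P^n$ corresponds to a locally decomposable, integrable, homogeneous polynomial $q$-form $\omega$ on $\mathbb C^{n+1}$ with coefficients of degree $2$, satisfying $i_R\omega=0$, and with $\sing(\omega)$ of codimension at least two. All of these conditions, apart from the shape of $\omega$, are invariant under linear automorphisms of $\mathbb C^{n+1}$, which preserve the Euler field $R$; so Medeiros' classification applies, and up to a linear change of coordinates $\omega$ is either (i) proportional to $i_R\bigl(dg_1\wedge\cdots\wedge dg_{q+1}\bigr)$ for homogeneous polynomials $g_1,\dots,g_{q+1}$, or (ii) the pull-back under a linear projection $\pi\colon\mathbb C^{n+1}\dashrightarrow\mathbb C^{q+2}$ of a form in the $q+2$ coordinates of $\mathbb C^{q+2}$ that cannot be written in fewer variables.

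In case (i) I would bookkeep degrees. The coefficients of $dg_1\wedge\cdots\wedge dg_{q+1}$ have degree $\sum_j(\deg g_j-1)$, and contracting with $R$ raises this by one; so $\deg\mathcal F=1$ forces $\sum_{j=1}^{q+1}(\deg g_j-1)=1$. No $g_j$ may be constant, as then $\omega\equiv 0$, so exactly one $g_j$ has degree $2$ and the remaining $q$ have degree $1$. Writing these as $\ell_1,\dots,\ell_q$ and $Q$, the foliation $\mathcal F$ is cut out by the fibers of $(\ell_1:\cdots:\ell_q:Q)\colon\mathbb P^n\dashrightarrow\mathbb P(1^{q},2)$. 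After discarding the degenerate configurations --- $Q$ a square, which is impossible since then $\omega$ carries a divisorial component in its zero set, and $Q$ a product of two linear forms modulo $\ell_1,\dots,\ell_q$, which places $\mathcal F$ in case (ii) --- this map is dominant, and its general fiber, a quadric hypersurface inside the general fiber $\mathbb P^{n-q+1}$ of $(\ell_1:\cdots:\ell_q)$, is irreducible. This is conclusion (1).

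In case (ii), after the linear change of coordinates $\omega$ is the pull-back of a foliation $\mathcal F'$ on $\mathbb P^{q+1}$ under the linear projection $\mathbb P^n\dashrightarrow\mathbb P^{q+1}$ induced by $\pi$; here $\mathcal F'$ is locally decomposable, integrable, of degree one, and of codimension $q$ on $\mathbb P^{q+1}$, hence one-dimensional. By Example \ref{E:concreto}, $T\mathcal F'=\mathcal O_{\mathbb P^{q+1}}$, so $\mathcal F'$ is induced by a global holomorphic vector field, a section of $H^0(\mathbb P^{q+1},T\mathbb P^{q+1})$; this is conclusion (2). Since linear pull-backs preserve codimension and degree, and since both families are plainly stable under linear pull-backs, this settles one implication; the converse is immediate from Example \ref{E:Pn}.

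The step I expect to be the main obstacle is not conceptual but bookkeeping, and delicate: matching the normal forms of \cite[Theorem A]{Airton} --- stated there for germs of affine forms, with no reference to the Euler field --- to the present homogeneous, $i_R$-closed setting, and checking at each stage that the requirement $\codim\sing(\omega)\ge 2$ from the definition of $\Fol$ has been applied correctly. It is this requirement that rules out the non-reduced first integrals and the low-rank specializations of $Q$ in case (i), and that forces the reduction in case (ii) to terminate at the one-dimensional (hence global vector field) level; that Medeiros' list contains no sporadic irreducible degree one form of dimension $\ge 2$ is part of the statement of \cite[Theorem A]{Airton}, so nothing extra is needed there.
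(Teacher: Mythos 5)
Your overall strategy --- reduce to Medeiros' classification \cite[Theorem A]{Airton} and then do the degree bookkeeping --- is the same as the paper's, and the two normal forms you write down for $\omega$ are in fact the correct ones. But the step you defer as ``bookkeeping'' is exactly where the one non-obvious idea of the proof lives, and your proposal does not supply it. As stated, \cite[Theorem A]{Airton} classifies locally decomposable integrable homogeneous $q$-forms of degree \emph{one}, i.e.\ with linear coefficients, and its first normal form is the exact decomposable form $dL_1\wedge\cdots\wedge dL_{q-1}\wedge dQ$. It does not apply to your $\omega$, whose coefficients have degree two, and it does not produce forms of the shape $i_R(dg_1\wedge\cdots\wedge dg_{q+1})$. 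The paper's device is to apply Medeiros' theorem to $d\omega$ rather than to $\omega$: the exterior derivative of a locally decomposable integrable form is again locally decomposable and integrable, so $d\omega$ is a $(q+1)$-form with \emph{linear} coefficients to which Theorem A does apply, and $\omega$ is then recovered from $d\omega$ via the Euler identity $i_R\,d\omega=(q+2)\,\omega$, valid because $i_R\omega=0$. This is what legitimizes your asserted dichotomy (note that your projection lands in $\mathbb C^{q+2}$, which is the target one gets for the $(q+1)$-form $d\omega$, not for the $q$-form $\omega$ --- a symptom that the dichotomy you state is really the output of the $d\omega$ argument, not of a direct application of Theorem A to $\omega$). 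Without this intermediate step the classification of $\omega$ is unjustified.

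Two smaller points. Your irreducibility argument for the general fiber in case (1) (an irreducible quadric inside a general fiber of the linear part, after discarding degenerate $Q$) is workable but left incomplete; the paper argues more cleanly that $\codim\sing(\omega)\ge 2$ forces $\codim\sing(d\omega)\ge 2$, hence the map to $\mathbb P(1^q,2)$ does not ramify in codimension one, and irreducibility of the general fiber follows from simple-connectedness of $\mathbb P^n$. Your case (2) is fine in substance once the reduction to $q+2$ variables is available: there $d\omega=i_X\,dx_0\wedge\cdots\wedge dx_{q+1}$ for a linear vector field $X$, and contracting with $R$ exhibits $\mathcal F$ as the linear pull-back of the foliation induced by $X$ on $\mathbb P^{q+1}$.
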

\begin{proof}
We start by recalling \cite[Theorem A]{Airton}: if $\omega$ is a locally decomposable integrable homogeneous  $q$-form of degree $1$ on $\mathbb C^{n+1}$  then
\begin{enumerate}
\item[(a)] there exist $q-1$   linear forms $L_1, \ldots, L_{q-1}$  and a quadratic form $Q$ such that
$
\omega = d L_1 \wedge \cdots \wedge d L_{q-1} \wedge d Q \, ;
$
or
\item[(b)] there exist a linear projection $\pi : \mathbb C^{n+1} \to  \mathbb C^{q+1}$, and
a locally decomposable integrable homogeneous  $q$-form $\widetilde \omega$ of degree $1$ on $\mathbb C^{q+1}$ such that  $\omega = \pi^* \widetilde \omega$.
\end{enumerate}

Let $\omega$ be a locally decomposable, integrable homogeneous $q$-form on $\mathbb C^{n+1}$  defining $\mathcal F$. Since $\mathcal F$ has degree $1$, the degree of the coefficients of $\omega$ is $2$. It is immediate from the definitions that the differential of a locally decomposable integrable $q$-form is also
locally decomposable and integrable. Therefore we can apply  \cite[Theorem A]{Airton} to $d\omega$. To recover information about $\omega$ we will use that $i_R \omega =0$
implies
$  i_R d\omega = (q+ 2)  \cdot \omega \, .$

If $d \omega$ is in case (a)
then $d \omega$ is the pull-back of $dx_0\wedge\cdots\wedge dx_q$ under the map $$ \mathbb C^{n+1} \ni (x_0, \ldots, x_n) \mapsto (L_1, \ldots, L_q, Q)\in \mathbb C^{q+1},$$ and
$(q+2)\omega= i_R d\omega$ is the pull-back of $i_{R(1^q,2)} dy_0\wedge \cdots \wedge dy_q$  where $R(1^q,2) = y_0 \frac{\partial}{\partial y_0} + \cdots + y_q \frac{\partial}{\partial y_q} + 2 y_{q+1} \frac{\partial}{\partial y_{q+1}}$. We are clearly in case (a) of the statement with rational map from $\mathbb P^n\dashrightarrow  \mathbb P(1^{q}, 2)$ described in homogeneous coordinates as above. It still remains to check that the general fiber is irreducible. As $\omega$ has zero set of codimension at least two, the same holds true for $d\omega$ and consequently the
map considered does not ramify in codimension one. Since $\mathbb P^n$ is simply-connected, the irreducibility of the general fiber follows.

If  $d \omega$ is in case (b) then, in suitable coordinates,  $d \omega$ depends only on $q+2$ variables, say $x_0, \ldots, x_{q+1}$. Being a $(q+1)$-form with coefficients of degree $1$, there exists a linear vector field $X$ such that $d\omega = i_X dx_0 \wedge \cdots \wedge dx_{q+1}$. The result follows.
\end{proof}

\begin{cor}
The space of foliations of degree $1$ and codimension $q$ on $\mathbb P^n$ has two irreducible components.
\end{cor}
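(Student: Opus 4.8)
The plan is to read the Corollary directly off Theorem~\ref{T:deg1}. Write $\mathrm{Fol}$ for the space of codimension~$q$, degree one foliations on $\mathbb P^n$. Theorem~\ref{T:deg1} says $\mathrm{Fol}=\mathcal A\cup\mathcal B$, where $\mathcal A$ is the set of foliations of type~(1) — those admitting a rational first integral $\mathbb P^n\dashrightarrow\mathbb P(1^{q},2)$ determined by $q$ linear forms and one quadratic form — and $\mathcal B$ is the set of foliations of type~(2) — the linear pull-backs along projections $\mathbb P^n\dashrightarrow\mathbb P^{q+1}$ of the foliations by curves induced by global vector fields on $\mathbb P^{q+1}$. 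Hence $\mathrm{Fol}=\overline{\mathcal A}\cup\overline{\mathcal B}$, so $\mathrm{Fol}$ has \emph{at most} two irreducible components, and exactly two precisely when neither of $\overline{\mathcal A},\overline{\mathcal B}$ is contained in the other. (If $q=n-1$ the second family contains the first and there is a single component, so I take $1\le q\le n-2$.)

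First I would check that $\overline{\mathcal A}$ and $\overline{\mathcal B}$ are irreducible, each being the closure of the image of an irreducible parameter variety. Indeed $\mathcal A$ is the image of the rational map from $(\mathbb P^{n\vee})^{q}\times\mathbb P H^0(\mathbb P^n,\mathcal O_{\mathbb P^n}(2))$ sending $(L_1,\dots,L_q,Q)$ to the foliation defined by $i_R(dL_1\wedge\dots\wedge dL_q\wedge dQ)$ — precisely the normal form for type~(1) obtained in the proof of Theorem~\ref{T:deg1} — and $\mathcal B$ is the image of the rational map from $\Gr(q+2,n+1)\times\mathbb P H^0(\mathbb P^{q+1},T\mathbb P^{q+1})$ sending $(\pi,X)$ to $\pi^{*}\mathcal G_X$, where $\mathcal G_X$ is the foliation by orbits of $X$. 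Both source varieties are irreducible, so $\overline{\mathcal A}$ and $\overline{\mathcal B}$ are irreducible.

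It then remains to show that neither closure is contained in the other. For $\overline{\mathcal A}\not\subseteq\overline{\mathcal B}$ I would argue that the set $Z\subset\mathrm{Fol}$ of foliations tangent to (the fibration of) some linear projection $\mathbb P^n\dashrightarrow\mathbb P^{q+1}$ is \emph{closed}: it is the image, under the proper projection $\Gr(q+2,n+1)\times\mathrm{Fol}\to\mathrm{Fol}$, of the closed incidence variety of pairs $(\pi,\mathcal F)$ whose $\pi$-fibres are contained in leaves of $\mathcal F$; clearly $\overline{\mathcal B}\subseteq Z$. But a general element of $\mathcal A$ lies outside $Z$: its general leaf is a smooth quadric hypersurface sitting inside one of the linear $\mathbb P^{\,n-q+1}$'s cut out by the forms $L_i$, and for $q\le n-2$ a general $(n-q-1)$-plane — the fibre dimension of a projection to $\mathbb P^{q+1}$ — is contained in neither such an ambient $\mathbb P^{\,n-q+1}$ nor, a fortiori, in a leaf (for the few small cases where the dimension count is tight one checks directly that the leaves do not form a congruence of linear fibres). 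For $\overline{\mathcal B}\not\subseteq\overline{\mathcal A}$ I would use that a general element of $\mathcal B$ is the linear pull-back of the orbit foliation of a diagonal linear vector field on $\mathbb P^{q+1}$ with $\mathbb Q$-linearly independent eigenvalues, hence its general leaf is not algebraic, whereas every element of $\mathcal A$ has an algebraic general leaf; the point is that this last property survives over all of $\overline{\mathcal A}$, so $\overline{\mathcal A}$ avoids the general element of $\mathcal B$. Consequently $\mathrm{Fol}$ has exactly the two irreducible components $\overline{\mathcal A}$ and $\overline{\mathcal B}$.

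The only non-formal step, and the one I expect to be the main obstacle, is the very last claim: since $\mathcal A$ and $\mathcal B$ are merely constructible, deducing incomparability of $\overline{\mathcal A}$ and $\overline{\mathcal B}$ from incomparability of generic members is not automatic and requires semicontinuity. For $\overline{\mathcal A}\not\subseteq\overline{\mathcal B}$ this is already dealt with above by replacing $\mathcal B$ by the genuinely closed locus $Z$ (the Grassmannian of projections being complete). For $\overline{\mathcal B}\not\subseteq\overline{\mathcal A}$ one must instead control the specialization of the first integrals carried by the members of $\mathcal A$ — i.e.\ show that $\overline{\mathcal A}$ remains inside the closed locus of foliations possessing a rational first integral of bounded complexity, which the generic, leaf-wise transcendental element of $\mathcal B$ does not. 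This is exactly the kind of argument carried out in Calvo-Andrade's \cite{Omegar} and in \cite[Lemma~8]{CerveauLinsNetoAnnals}.
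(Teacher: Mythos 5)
Your proposal is correct and takes essentially the same route as the paper, which in fact offers no written proof at all and treats the statement as immediate from Theorem \ref{T:deg1}: the two families of the theorem are each the image of an irreducible parameter variety, and neither closure contains the other. Your two supporting observations --- that the corollary as literally stated should be read with $q\le n-2$ (for $q=n-1$ both families consist of vector-field foliations and the space is an open subset of a projective space, hence irreducible), and that the non-containment $\overline{\mathcal B}\not\subseteq\overline{\mathcal A}$ needs a closed condition (algebraic leaves of bounded degree, via the Chow variety) rather than a generic one --- are exactly the points the paper leaves implicit, and your way of handling them is sound.
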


 \subsection*{Proof that Theorem \ref{T:CLN} for  $n=3$ implies Theorem \ref{T:CLN} for  $n>3$ }

Notice that when $n>3$, a foliation of degree two has negative canonical bundle. Thus, if $\F$ is semi-stable
Proposition \ref{P:boaintegral} implies that $\F$ is either a pencil of quadrics or a pencil of cubics having a hyperplane with multiplicity three as a member.

Suppose now that $\F$ is unstable and let $\G$ be its maximal destabilizing foliation. Recall from Example
\ref{E:concreto}
that
\[
\frac{\deg(\G)}{\dim(\G)} < \frac{\deg(\F)}{\dim(\F)} \, .
\]
Therefore $\deg(\G)<2$. If $\G$ has degree zero then $\F$ is a linear pull-back  of a foliation
of degree two on a lower-dimensional projective space and we can proceed inductively. Suppose now that the degree of $\G$ is one. The classification of foliations of degree one, Theorem \ref{T:deg1}, implies that the
semi-stable foliations of degree one are either defined by a rational map to $\mathbb P(1^q,2)$ or
have dimension one. The maximal destabilizing foliation $\G$, which is semi-stable by definition, does not fit into  the former case as  we would have $1 < \deg(\F) /\dim(\F)$. Thus $\G$ must be defined by a rational map to
$\mathbb P(1^q,2)$. It is not hard to verify that in this case the foliation $\F$ must be in the component $\Log(1,1,2)$. \qed

\section{Foliations on Fano $3$-folds of index two}\label{S:i2}

We know turn our attention to Fano $3$-folds of index two.  Unlikely in the cases where the index is four (projective space) or
three (quadric), these $3$-folds have moduli as the $3$-folds of index one. As will be seen below the space of foliations with
$K\F=0$  on Fano $3$-folds of index one or two  behaves rather uniformly with respect to the moduli, with only two exceptions. The exceptions
are the  quasi-homogeneous $PSL(2,\C)$--manifolds of index one and two.

Let $X$ be a Fano $3$-fold with $\Pic(X) = \mathbb Z H$ and index $i(X)=2$ which means, by definition, $-KX= 2H$. In this case the
classification is very precise (see \cite{JR} and references therein) and says that $X$ is isomorphic to a $3$-fold fitting in one of the following classes:
\begin{enumerate}
\item{$H^3=1$.} Hypersurface of degree $6$ in   $\mathbb P(1,1,1,2,3)$;
\item{$H^3=2$.} Hypersurface  of degree $4$ in   $\mathbb P(1,1,1,1,2)$;
\item{$H^3=3$.} Cubic in $\mathbb P^4$;
\item{$H^3=4$.} Intersection of two quadrics in $\mathbb P^5$;
\item{$H^3=5$.} Intersection of the Grassmannian $\Gr(2,5) \subset \mathbb P^9$ with a $\mathbb P^6$.
\end{enumerate}
Although not evident from the description above, the $3$-folds falling in class (5) are all isomorphic to
a $3$-fold $X_5 \subset \mathbb P^6$. In \cite{MukaiUmemura}  $X_5$ is described as an equivariant compactification
of  $\Aut(\mathbb P^1)/ \Gamma$ where $\Gamma$ is the octahedral group. Explicitly, if we consider the  point $p_0 \in \Sym^6 \mathbb P^1$  defined
by the polynomial $xy(x^4-y^4)$ then $X_5$ is the closure of the $\Aut(\mathbb P^1)$-orbit of $p_0$ under the natural action.

\begin{thm}\label{T:i2}
Let $X$ be a  Fano $3$-fold with $\Pic(X) = \mathbb Z H$ and index $i(X)=2$. If $X \neq X_5$ then the space of
codimension one foliations on $X$ with trivial canonical bundle is  irreducible.  If $X = X_5$ then
the space of codimension one foliations on $X$ with trivial canonical bundle has two irreducible components.
\end{thm}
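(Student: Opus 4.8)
The plan is to run the rough classification of Theorem~\ref{T:rr} and then to use that these $3$-folds are rigid enough to have almost no automorphisms. First I would record the bookkeeping: since $-KX=2H$, adjunction gives $K\F=0$ if and only if $N\F=\mathcal O_X(2)$, so the claim is that $\Fol(X,\mathcal O_X(2))$ is irreducible for $X\neq X_5$ and has exactly two irreducible components for $X=X_5$. The input I would take from the classification of Fano $3$-folds of index two with $\Pic(X)=\Z H$ (see \cite{JR} and the references there) is that $X_5$ is the only one of them carrying non-constant vector fields: $\Aut(X_5)\cong PGL(2,\C)$, acting through $X_5=\overline{PGL(2,\C)\cdot[xy(x^4-y^4)]}\subset\P^6=\Sym^6\P^1$, whereas $H^0(X,TX)=0$ for every other $X$ on the list.

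For $X\neq X_5$ the proof is then short. As $X$ is not $\P^3$, Theorem~\ref{T:rr} forces $\F$ to be semi-stable and to fall in one of its three cases; cases (1) and (2) each produce a non-zero global vector field on $X$, which is excluded, so $\F$ is defined by a closed rational $1$-form without divisorial components in its zero set. By the discussion of Section~\ref{S:parametros} together with Lemma~\ref{L:CLN}, $\F$ then lies in some $\Log(d_1,\ldots,d_k)$ with $\sum d_i=\deg N\F=2$ and $d_i\ge1$, whence $k=2$, $d_1=d_2=1$, i.e., $\F\in\Rat(1,1)$. Conversely every pencil $\langle f_1,f_2\rangle$ of hyperplane sections has base locus and critical locus of codimension two, so $\Rat(1,1)$ is the image of the morphism $\Gr(2,H^0(X,\mathcal O_X(1)))\to\P H^0(X,\Omega^1_X(2))$, $\langle f_1,f_2\rangle\mapsto[f_2\,df_1-f_1\,df_2]$; it is therefore closed, irreducible and non-empty (since $h^0(\mathcal O_X(1))\ge3$). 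Hence $\Fol(X,\mathcal O_X(2))=\Rat(1,1)$ is irreducible.

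For $X=X_5$ the same reasoning disposes of case (3), giving $\Rat(1,1)$. Case (2) I would rule out by noting that every $\C$- or $\C^*$-subgroup of $PGL(2,\C)$ acts on $X_5\subset\P^6=\Sym^6\P^1$ with only isolated fixed points (a single one for a unipotent subgroup, seven for a maximal torus), so none of them has non-isolated fixed points. In case (1), $T\F\cong\mathcal O_{X_5}^{\oplus2}$, so $H^0(X_5,T\F)$ is an involutive $2$-dimensional subalgebra of $\mathfrak{aut}(X_5)\cong\mathfrak{sl}(2,\C)$; up to conjugacy the only such subalgebra is a Borel subalgebra $\mathfrak b$, and thus $\F$ is the foliation $\F_{\mathfrak b}$ by the orbits of a Borel subgroup $B\subset PGL(2,\C)$, the analogue for $X_5$ of the foliation of Example~\ref{E:affQ}. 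To see that this produces a second component I would verify two points. First, $\F_{\mathfrak b}$ genuinely is a foliation with $K\F_{\mathfrak b}=0$: the zero locus of the twisted $1$-form $v_1\wedge v_2\in H^0(X_5,\Omega^1_{X_5}(2))$ obtained by wedging two generators of $\mathfrak b$ is the set of points whose $B$-orbit has dimension at most one, and in $\P^6=\Sym^6\P^1$ this set is the one-dimensional family of configurations $kq+(6-k)\infty$ ($q\in\C$); hence it has codimension two in $X_5$, so $v_1\wedge v_2$ has no divisorial zeros and $N\F_{\mathfrak b}=\mathcal O_{X_5}(2)$. Second, $\F_{\mathfrak b}\notin\Rat(1,1)$: a foliation in $\Rat(1,1)$ has infinitely many invariant hyperplane sections, whereas the invariant hyperplane sections of $\F_{\mathfrak b}$ correspond to the $B$-invariant lines in $H^0(X_5,\mathcal O_{X_5}(1))\cong\Sym^6\C^2$, of which there is exactly one. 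Finally, the stabilizer of $[\F_{\mathfrak b}]$ in $PGL(2,\C)$ is $N_{PGL(2,\C)}(B)=B$, so the closure $\Aff$ of its orbit is $PGL(2,\C)/B\cong\P^1$, irreducible of dimension one; since $\Rat(1,1)$ has dimension $2(h^0(\mathcal O_{X_5}(1))-2)=10$, neither of the two irreducible closed sets $\Rat(1,1)$, $\Aff$ contains the other, and $\Fol(X_5,\mathcal O_{X_5}(2))=\Rat(1,1)\cup\Aff$ has exactly two irreducible components.

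The routine ingredients above are the adjunction bookkeeping and the Grassmannian description of $\Rat(1,1)$. The work concentrates in the $X_5$ case: controlling the singular locus of the Borel-orbit foliation precisely enough to conclude $K\F_{\mathfrak b}=0$, and separating $\F_{\mathfrak b}$ from the pencils, the latter done most cleanly through the count of invariant hyperplane sections, parallel to the argument for $Q^3$ in Example~\ref{E:affQ}. A subsidiary point, needing care rather than ingenuity, is to cite correctly that every index-two Fano $3$-fold with $\Pic(X)=\Z H$ other than $X_5$ has finite automorphism group.
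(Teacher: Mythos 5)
Your proposal is correct and follows essentially the same route as the paper: Theorem \ref{T:rr} plus the vanishing of $H^0(X,TX)$ for $X\neq X_5$ (which the paper establishes via Lemma \ref{L:v00} and Corollary \ref{C:nofields} rather than citing it), and for $X_5$ the identification $\mathfrak{aut}(X_5)\cong\mathfrak{sl}(2,\C)$, the isolated fixed points of one-parameter subgroups (Lemma \ref{L:X5}, which the paper proves via the variety of lines rather than your direct computation on $\Sym^6\P^1$), and the separation of the affine component from $\Rat(1,1)$ by counting invariant hyperplane sections. The extra verifications you supply (closedness of $\Rat(1,1)$ as the image of a Grassmannian, and the codimension-two singular locus of the Borel foliation) are left implicit in the paper but are consistent with it.
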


As we will see from its proof the result is much more precise as it describes quite precisely the irreducible
components. We summarize the description in the Table below.

\medskip

\begin{tabular}{|l|c|c|}
\hline
{\bf Manifold} & {\bf Irreducible component} & {\bf dimension} \\
\hline\hline
Hypersurface of degree $6$
in   $\mathbb P(1,1,1,2,3)$&  $\Rat(1,1)     $ & $2$ \\
\hline
Hypersurface  of degree $4$
 in $\mathbb P(1,1,1,1,2)$ &  $\Rat(1,1) $ & $ 4$ \\
 \hline
  Cubic in $\mathbb P^4$ &  $\Rat(1,1) $ & $6$ \\
  \hline
  Intersection of quadrics in $\mathbb P^5$  &  $\Rat(1,1) $ & $8$ \\
  \hline
\multirow{2}{*}{$X_5$} &  $\Rat(1,1)  $ & $10$ \\
  \cline{2-3}
                  &$ \Aff $ & $1$
      \\          \hline
  \end{tabular}

\begin{lemma}\label{L:X5}
The dimension of $H^0(X_5, TX_5)$ is $3$, and every $v \in H^0(X_5,TX_5)$ has isolated singularities.
\end{lemma}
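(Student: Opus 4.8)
The plan is to use the description of $X_5$ as an equivariant compactification of $\Aut(\mathbb P^1)/\Gamma$ with $\Gamma$ the octahedral group, given in the paragraph preceding the statement. Since $X_5$ contains a dense orbit of $G = \Aut(\mathbb P^1) \cong PSL(2,\mathbb C)$, the Lie algebra $\mathfrak{g} = \mathfrak{sl}(2,\mathbb C)$ maps to $H^0(X_5, TX_5)$, and this map is injective because the action is faithful and the orbit is dense (a vector field in $\mathfrak{g}$ vanishing on the open orbit vanishes identically). Hence $\dim H^0(X_5, TX_5) \ge 3$. First I would establish that this inequality is an equality. One way is to invoke the classification of automorphism groups of Fano threefolds: $X_5$ is well known to have $\Aut(X_5)^{\circ} = PSL(2,\mathbb C)$ (see \cite{MukaiUmemura}), so $H^0(X_5, TX_5) = \mathfrak{aut}(X_5) = \mathfrak{sl}(2,\mathbb C)$ has dimension exactly $3$. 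Alternatively one could compute $h^0(TX_5)$ cohomologically using that $X_5 = \Gr(2,5) \cap \mathbb P^6$, via the exact sequences relating $TX_5$, $T\mathbb P^6|_{X_5}$ and the normal bundle, together with Bott-type vanishing on the Grassmannian; but quoting the known automorphism group is cleaner.

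For the second assertion I need that every nonzero $v \in \mathfrak{sl}(2,\mathbb C)$, viewed as a vector field on $X_5$, has zero set of codimension at least two. The zero set of $v$ is the fixed-point set $\Fix(v)$ of the corresponding one-parameter subgroup. I would split into the three conjugacy classes of nonzero elements of $\mathfrak{sl}(2,\mathbb C)$: semisimple (diagonalizable) and nilpotent, since every nonzero element is conjugate to a multiple of one of these. In each case the fixed locus on the open orbit $G/\Gamma$ is the image of the fixed points of the action of the corresponding subgroup on $G$ by left multiplication composed with... — more precisely, on $G/\Gamma$ a point $g\Gamma$ is fixed by $\exp(tv)$ iff $\exp(tv) g \Gamma = g\Gamma$, i.e. $g^{-1}\exp(tv)g \in \Gamma$ for all $t$; since $\Gamma$ is finite and $\exp(tv)$ is a connected one-parameter subgroup, this forces $g^{-1}\exp(tv)g = e$, which is impossible for $v \ne 0$. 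So $v$ has no zeros on the open orbit, and $\Fix(v)$ is contained in the complement $X_5 \setminus (G/\Gamma)$, which has codimension at least one. The remaining task is to rule out that $\Fix(v)$ contains a divisorial component, i.e. that $\Fix(v)$ equals (set-theoretically) a component of the boundary.

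I expect the main obstacle to be exactly this last point: understanding the boundary $X_5 \setminus (G/\Gamma)$ as a union of $G$-orbits and checking the fixed loci there. The boundary is a union of orbits of strictly smaller dimension; by general theory of spherical/equivariant compactifications (or by the explicit description via $p_0 = xy(x^4-y^4) \in \Sym^6\mathbb P^1$, whose boundary orbits correspond to degenerate configurations of the $6$ roots) the boundary has a one-dimensional orbit (a rational curve, the unique closed orbit) and possibly a two-dimensional orbit. A divisorial fixed component of $v$ would have to be $G$-invariant only if $v$ were central, which it is not; but $\Fix(v)$ need not be $G$-invariant. However, for $v$ semisimple, $\Fix(v)$ is the fixed locus of a torus $\mathbb C^* \subset G$ acting on the projective variety $X_5$, and a torus fixed locus is a disjoint union of smooth subvarieties; a two-dimensional such component would mean $\mathbb C^*$ acts trivially on a surface in $X_5$, contradicting the denseness of the orbit (the surface would meet the open orbit, where we showed there are no fixed points, unless the surface is in the boundary, but the boundary is at most two-dimensional and is not pointwise $\mathbb C^*$-fixed since it contains lower orbits moved by the torus). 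For $v$ nilpotent, $\Fix(v)$ is the fixed locus of a $\mathbb C_+$-action, which on a projective variety is connected and again meets any invariant divisor; one checks using the explicit $\mathbb P^6$-coordinates from $\Sym^6\mathbb P^1$ that the fixed locus of the unipotent flow is the single point $[6\infty]$ together with lower-dimensional strata, hence of codimension $\ge 2$. I would carry out the semisimple case via torus fixed-locus generalities and the nilpotent case by the explicit coordinate computation on $\Sym^6 \mathbb P^1$, the latter being the genuinely computational step.
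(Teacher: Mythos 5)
Your overall route is genuinely different from the paper's. The paper proves both assertions through the variety of lines $\Sigma$ on $X_5$: by Furushima--Nakayama $\Sigma\cong\mathbb P^2$, the induced $\Aut(\mathbb P^1)$-action on it is the standard action on $\Sym^2\mathbb P^1$, an automorphism of $X_5$ acting trivially on $\Sigma$ is the identity (whence $h^0(X_5,TX_5)=3$), and a positive-dimensional component of the zero set of $v$ would be contained in finitely many lines, forcing the flow on $\Sigma$ to fix the positive-dimensional family of lines meeting a given one --- a contradiction. You work instead with the orbit structure of $X_5\subset\mathbb P(\Sym^6\mathbb C^2)$. Your argument that $v$ has no zeros on the open orbit $G/\Gamma$ is correct and complete, and quoting $\Aut^0(X_5)=PSL(2,\mathbb C)$ for the first assertion is an acceptable (if less self-contained) substitute for the paper's argument via $\Sigma$.

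The genuine gap is in the boundary analysis. The lemma asserts \emph{isolated} zeros --- and this is exactly what the proof of Theorem~\ref{T:i2} needs, since it must exclude alternative (2) of Theorem~\ref{T:rr}, i.e.\ one-parameter actions with non-isolated (not merely divisorial) fixed loci --- whereas you only set out to prove that the zero set has codimension at least two. Concretely, your semisimple-case argument rules out a pointwise-fixed \emph{surface} but says nothing about a pointwise-fixed \emph{curve} in the boundary, and the nilpotent case is deferred to an unperformed computation. Both problems disappear if you push your own setup one step further: a nonzero $v\in\mathfrak{sl}(2,\mathbb C)$ acts linearly on the irreducible representation $\Sym^6\mathbb C^2\cong\mathbb C^7$, and the zero locus on $\mathbb P^6$ of the induced vector field is precisely the union of the projectivized eigenspaces of this operator. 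For $v$ nilpotent the operator is a single Jordan block, giving a single point; for $v$ semisimple the weights $-6,-4,\ldots,6$ are pairwise distinct, giving seven points. Hence the zero set of $v|_{X_5}$ is finite, with no need for the orbit-by-orbit discussion of the boundary. With this replacement your argument is complete and arguably shorter than the paper's.
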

\begin{proof}
Let $\Sigma$ be the variety of lines contained in $X_5$. According to \cite{FurushimaNakayama}, $\Sigma$ is isomorphic to $\mathbb P^2$.
The induced action of $\Aut(\mathbb P^1)$ on it has one closed orbit isomorphic to a conic $C \subset \mathbb P^2$, and  one open orbit
isomorphic to $\mathbb P^2 \setminus C$. It can be identified with the natural action of $\Aut(\mathbb P^1)$ in $\Sym^2 \mathbb P^1 \simeq \mathbb P^2$.
If an automorphism of $X_5$ induces the identity on $\Sigma$ then it must be identity since through every point of every line contained in $X_5$ passes
at least another line, loc. cit. Corollary 1.2. This suffices to show that  $h^0(X_5,TX_5)=3$.

Let now $v \in H^0(X_5,TX_5)$ be a non-zero vector field, and $H = \exp(\mathbb Cv)\subset \Aut(X_5)$ be the one-parameter
subgroup generated by it. The description of the induced action of $\Aut(X)$ on $\Sigma$ implies that the induced action of $H$ on
$\Sigma$ has isolated fixed points. Therefore, if the zero set of $v$ has positive dimension then it must be contained
in a finite union of lines. If we take $\ell$ as one of these lines then  the action of $H$ on $\Sigma$ would fix all the lines intersecting
$\ell$. This contradicts the description of the induced action of $\Aut(X)$ on $\Sigma$.
\end{proof}

\begin{lemma}\label{L:v00}
Let $\mathbb P= \mathbb P(q_0, q_1,q_2,q_3,q_4)$ be a well-formed weighted projective space of dimension four with $q_0\le q_1 \le q_2 \le q_3 \le q_4$,
and $X \subset \mathbb P$ be a smooth hypersurface. If $\deg(X) \ge q_2 + q_3 + q_4$
then $h^0(X,TX)=0$.
\end{lemma}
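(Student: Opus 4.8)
The plan is to realise $H^0(X,TX)$ as the degree‑zero part of the module of quasi‑homogeneous polynomial vector fields tangent to the defining hypersurface, and then annihilate it by a Koszul degree count. Set $d=\deg(X)$, let $S=\mathbb{C}[x_0,\dots,x_4]$ be the graded coordinate ring of $\mathbb{P}=\mathbb{P}(q_0,\dots,q_4)$ with $\deg(x_i)=q_i$, and let $F\in S_d$ be a quasi‑homogeneous polynomial defining $X$. Since $\mathbb{P}$ is well‑formed and $X$ is smooth, $X$ is quasi‑smooth and disjoint from $\operatorname{Sing}(\mathbb{P})$ (using $\operatorname{Sing}(X)=X\cap\operatorname{Sing}(\mathbb{P})$ for well‑formed quasi‑smooth hypersurfaces), so the Euler sequence and the conormal sequence restrict to exact sequences of locally free sheaves on $X$:
\[
0\to\mathcal{O}_X\xrightarrow{R}\bigoplus_{i=0}^{4}\mathcal{O}_X(q_i)\to T\mathbb{P}|_X\to 0,
\qquad
0\to TX\to T\mathbb{P}|_X\xrightarrow{\,dF\,}\mathcal{O}_X(d)\to 0,
\]
where $R=\sum_i q_i x_i\,\partial_{x_i}$ is the class of the Euler vector field.

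First I would compute cohomology. Using $H^i(\mathbb{P},\mathcal{O}_{\mathbb{P}}(m))=0$ for $0<i<4$ and the resolutions $0\to\mathcal{O}_{\mathbb{P}}(m-d)\to\mathcal{O}_{\mathbb{P}}(m)\to\mathcal{O}_X(m)\to 0$, together with the elementary fact that $d\ge q_2+q_3+q_4>q_i$ for every $i$, one obtains $H^0(X,\mathcal{O}_X)=\mathbb{C}$, $H^1(X,\mathcal{O}_X)=0$, $H^0(X,\mathcal{O}_X(q_i))=S_{q_i}$, and $H^0(X,\mathcal{O}_X(d))=S_d/\mathbb{C}F$. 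Feeding these into the two exact sequences above identifies
\[
H^0(X,TX)\;=\;\bigl\{\,v=\textstyle\sum_i a_i\,\partial_{x_i}\ :\ a_i\in S_{q_i},\ v(F)\in\mathbb{C}F\,\bigr\}\big/\mathbb{C}R ,
\]
the map $T\mathbb{P}|_X\to\mathcal{O}_X(d)$ being $v\mapsto v(F)=\sum_i a_i\,\partial F/\partial x_i$, and $R(F)=dF$ by Euler's identity.

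It then remains to show that every $v$ with $v(F)=\lambda F$ is a scalar multiple of $R$. Replacing $v$ by $v-\tfrac{\lambda}{d}R$ reduces us to the case $v(F)=0$, i.e.\ to a syzygy $(a_0,\dots,a_4)$, with $a_i\in S_{q_i}$, of the partials $\partial F/\partial x_0,\dots,\partial F/\partial x_4$. Because $X$ is quasi‑smooth, these partials have $\{0\}$ as their only common zero, hence form a regular sequence; so by exactness of the Koszul complex every such syzygy is an $S$‑linear combination of the trivial syzygies $(\partial F/\partial x_j)\,e_i-(\partial F/\partial x_i)\,e_j$, whose coefficients are forced (by homogeneity, since $\partial F/\partial x_j$ has degree $d-q_j$) to lie in $S_{q_i+q_j-d}$. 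As $q_i+q_j\le q_3+q_4<q_2+q_3+q_4\le d$ for all $i<j$, these degrees are negative and the coefficients vanish; thus $v=0$, and therefore $H^0(X,TX)=0$.

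The step that needs care is the bookkeeping—justifying the restricted Euler and conormal sequences on a quasi‑smooth well‑formed $X$, and the vanishing of the intermediate cohomology of line bundles on a weighted projective space—for which I would refer to Dolgachev's account of weighted projective varieties; the mathematically decisive input, the Koszul degree count, is elementary, and the only genuine subtlety is the passage from "$X$ smooth" to "$X$ quasi‑smooth and disjoint from $\operatorname{Sing}(\mathbb{P})$".
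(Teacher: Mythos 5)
Your argument is correct, but it takes a genuinely different route from the paper's. The paper first rewrites $TX=\Omega^2_X\otimes\mathcal O_X(Q-d)$ via adjunction ($\Omega^3_X=\mathcal O_X(d-Q)$, $Q=\sum q_i$) and then kills $h^0$ by chasing the conormal and restriction sequences for $\Omega^1_{\mathbb P}$ and $\Omega^2_{\mathbb P}$; the decisive inputs are Dolgachev's Bott-type vanishing theorems for $\Omega^p_{\mathbb P}(n)$, and the hypothesis $d\ge q_2+q_3+q_4$ enters precisely as the negation of the criterion $h^0(\mathbb P,\Omega^2_{\mathbb P}(Q-d))\neq 0\iff d<Q-q_0-q_1$. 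You instead stay with the tangent sheaf, use only the vanishing of intermediate cohomology of the line bundles $\mathcal O(n)$ to identify $H^0(X,TX)$ with the quasi-homogeneous vector fields $\sum a_i\partial_{x_i}$, $a_i\in S_{q_i}$, preserving $F$ modulo the Euler field, and finish with the Koszul regular-sequence degree count on the partials of $F$. This is more elementary (no Bott vanishing for twisted $p$-forms), more explicit, and in fact establishes the conclusion under the weaker hypothesis $d>q_3+q_4$; the trade-off is that it leans on quasi-smoothness of the affine cone to get the regular sequence, whereas the paper's computation is purely sheaf-theoretic. The one caveat you flag---passing from ``$X$ smooth'' to ``$X$ quasi-smooth and disjoint from $\mathrm{Sing}(\mathbb P)$''---is a genuine subtlety, but it is equally (and silently) needed in the paper's proof for the adjunction formula and the conormal sequence; it holds for the hypersurfaces to which the lemma is actually applied, though it is not a formal consequence of smoothness for arbitrary weighted hypersurfaces.
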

\begin{proof}
Set $d=\deg(X)$ and $Q= \sum_{i=0}^4 q_i$. By \cite[Theorem 3.3.4]{Dolg}, $\Omega^3_X = \mathcal O_X(d- Q)$.
Consequently $TX = \Omega^2_X \otimes \mathcal O_X(Q- d)$.
From the long exact sequence associated to
\[
0 \to \Omega^1_X\otimes N^*_X \otimes \mathcal O_X(Q-d) \to {\Omega^2_{\mathbb P}}_{|X}(Q-d) \to \Omega^2_X(Q-d) \to 0 \,
\]
we see that $h^0(X,TX)=0$ when  $h^0(X,{\Omega^2_{\mathbb P}}_{X}(Q-d))= h^1(X,\Omega^1_X (Q-2d))=0$.

To compute $h^1(X,\Omega^1_X (Q-2d))$, consider the conormal sequence of $X \subset \mathbb P$ tensored by $\mathcal O_X(Q-2d)$
\[
0 \to N^*_X( Q- 2d) \to {\Omega^1_{\mathbb P}}_{|X}(Q-2d) \to \Omega^1_X(Q-2d) \to 0 \, .
\]
On the one hand, as the intermediary cohomology of $\mathcal O_X(n)$ vanishes for every $n \in \mathbb Z$ \cite[Theorem 3.2.4 (iii)]{Dolg},
$h^2(X,N^*_X (Q-2d)) =  h^2(X,\mathcal O_X (Q-3d)) =0$. On the other hand $H^1(X, {\Omega^1_{\mathbb P}}_{X}(Q-2d))$ can be computed with the
exact sequence
\[
0 \to   \Omega^1_{\mathbb P}(Q-3d)  \to \Omega^1_{\mathbb P} ( Q - 2d) \to {\Omega^1_{\mathbb P} }_{|X} ( Q - 2d) \to 0 \, .
\]
Now \cite[Theorem 2.3.2]{Dolg} tell us that
$h^2(\mathbb P, \Omega^1_{\mathbb P} (n)=0$  for every $n \in \mathbb Z$, and $h^1(\mathbb P, \Omega^1_{\mathbb P} (n))=0$ if and only if $n \neq 0$.
But $d \ge q_2+q_3 +q_4$, as we have assumed, implies $2d > Q$. Thus $h^1(X, {\Omega^1_{\mathbb P}}_{X}(Q-2d)) = 0$ as wanted.

It remains to show that $h^0(X, {\Omega^2_{\mathbb P}}_{|X} ( Q - d ))=0$. To do it, consider the exact
sequence
\[
0 \to \Omega^2_{\mathbb P}(Q-2d) \to \Omega^2_{\mathbb P}(Q-d) \to {\Omega^2_{\mathbb P}}_{|X} ( Q - d ) \to 0 \, .
\]
The vanishing of $h^1(\mathbb P, \Omega^2_{\mathbb P}(Q-2d))$ is assured by \cite[Theorem 2.3.4]{Dolg}. Finally, \cite[Corollary 2.3.4]{Dolg}
implies $h^0(\mathbb P , \Omega^2_{\mathbb P}(Q-d)) \neq 0$ if and only if
\[
d < Q - q_0 - q_1 .
\]
The lemma follows.
\end{proof}

Lemma \ref{L:v00} together with the classification of Fano $3$-folds of index two imply the following corollary.

\begin{cor}\label{C:nofields}
If $X$ is a Fano $3$-fold with $\rho(X)=1$ and $i(X)=2$ then $h^0(X,TX) \neq 0$ if and only if
$X$ is isomorphic to $X_5$.
\end{cor}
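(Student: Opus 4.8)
The plan is to go through the five types of Fano $3$-folds with $\rho(X)=1$ and $i(X)=2$ listed above. For type $(5)$ one has $X\cong X_5$, and Lemma~\ref{L:X5} gives $h^0(X_5,TX_5)=3\neq 0$; this already proves the implication $X\cong X_5\Rightarrow h^0(X,TX)\neq 0$, and it will be the only case on the list with nonvanishing $H^0(X,TX)$. So the real task is to show $h^0(X,TX)=0$ for the types $(1)$, $(2)$, $(3)$, and $(4)$.

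Types $(1)$, $(2)$, $(3)$ are smooth hypersurfaces in a (manifestly well-formed) weighted projective $4$-space $\mathbb P(q_0,\dots,q_4)$, and here the plan is simply to check the numerical hypothesis $\deg(X)\ge q_2+q_3+q_4$ of Lemma~\ref{L:v00} in each case:
\[
\mathbb P(1,1,1,2,3),\ \deg X=6=1+2+3;\qquad \mathbb P(1,1,1,1,2),\ \deg X=4=1+1+2;\qquad \mathbb P^4,\ \deg X=3=1+1+1.
\]
In every case the inequality holds (with equality), so Lemma~\ref{L:v00} gives $h^0(X,TX)=0$.

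The remaining, and essentially only nontrivial, case is type $(4)$: a smooth complete intersection $X=Q_1\cap Q_2\subset\mathbb P^5$ of two quadrics, to which Lemma~\ref{L:v00} does not literally apply. Here the plan is a direct cohomological computation. Using the Koszul resolution $0\to\mathcal O_{\mathbb P^5}(-4)\to\mathcal O_{\mathbb P^5}(-2)^{\oplus 2}\to\mathcal I_X\to 0$, the identification $T\mathbb P^5\cong\Omega^4_{\mathbb P^5}(6)$, and Bott's formula, one checks that $H^0(T\mathbb P^5\otimes\mathcal I_X)=H^1(T\mathbb P^5\otimes\mathcal I_X)=0$, so restriction is an isomorphism $H^0(\mathbb P^5,T\mathbb P^5)\xrightarrow{\ \sim\ }H^0(X,T\mathbb P^5|_X)$. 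Plugging this into the normal bundle sequence $0\to TX\to T\mathbb P^5|_X\to N_{X/\mathbb P^5}\to 0$, with $N_{X/\mathbb P^5}\cong\mathcal O_X(2)^{\oplus 2}$ (via the two quadratic equations), identifies $H^0(X,TX)$ with the space of linear vector fields on $\mathbb P^5$ that carry $Q_1$ and $Q_2$ into the ideal $(Q_1,Q_2)$, i.e.\ with the Lie algebra of the subgroup of $\mathrm{PGL}(6,\mathbb C)$ preserving the pencil $\langle Q_1,Q_2\rangle$. Since $X$ is smooth the pencil is simultaneously diagonalizable, $Q_1=\sum_{i=0}^5 x_i^2$ and $Q_2=\sum_{i=0}^5 a_i x_i^2$ with the $a_i$ pairwise distinct; writing such a vector field as a matrix and imposing the two ideal conditions forces it to be diagonal, and then, using that a degree-two polynomial in one variable cannot have six roots, to be scalar, hence zero in $\mathfrak{sl}(6,\mathbb C)$. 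So $h^0(X,TX)=0$. (Alternatively one may simply invoke the classical fact that a smooth intersection of two quadrics in $\mathbb P^n$, $n\ge 4$, has finite automorphism group.)

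The hypersurface cases are routine once Lemma~\ref{L:v00} is in hand, so the only genuine work is in type $(4)$. Within that case the two ingredients are the cohomological reduction and the pencil-of-quadrics computation; the place requiring some care is the bookkeeping with Bott's formula for the sheaves $T\mathbb P^5(k)$ and $\Omega^p_{\mathbb P^5}(k)$ appearing above, though all the relevant cohomology groups do vanish, after which the linear algebra is elementary.
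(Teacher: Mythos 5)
Your proposal is correct, and for the hypersurface cases it is exactly the paper's argument: the paper's entire proof of Corollary~\ref{C:nofields} is the single sentence that Lemma~\ref{L:v00} together with the classification implies the statement, which covers types (1)--(3) by the same numerical check $\deg(X)\ge q_2+q_3+q_4$ you perform (each time with equality), and type (5) by Lemma~\ref{L:X5}. Where you genuinely go beyond the paper is type (4): a smooth $(2,2)$ complete intersection in $\mathbb P^5$ is not a hypersurface in a weighted projective $4$-space, so Lemma~\ref{L:v00} does not literally apply, and the paper simply leaves this case unaddressed. Your treatment of it is correct and complete: the Koszul/Bott computation showing $H^0(T\mathbb P^5\otimes\mathcal I_X)=H^1(T\mathbb P^5\otimes\mathcal I_X)=0$ (note $T\mathbb P^5(-2)\cong\Omega^4_{\mathbb P^5}(4)$ and $T\mathbb P^5(-4)\cong\Omega^4_{\mathbb P^5}(2)$, both acyclic in the relevant degrees) reduces everything to the stabilizer of the pencil, and the simultaneous diagonalization with pairwise distinct $a_i$ forces the quadratic $\beta t^2+(\alpha-\delta)t-\gamma$ to vanish at six points, hence the field is scalar. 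So your write-up actually supplies a justification the published proof omits; the only cost is the Bott-formula bookkeeping, which you correctly flag as the delicate point.
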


\subsubsection*{{\bf Proof of Theorem \ref{T:i2}}} Let $X$ be a Fano $3$-fold of index two with $\Pic(X)= \mathbb Z  \cdot H$, where
$H$ is an ample divisor, and $\mathcal F$ a codimension one foliation
on $X$ with $K \mathcal F=0$. If $H^3 \le 4$ then Corollary \ref{C:nofields} implies $X$ has no vector fields. Therefore by Theorem \ref{T:rr}
any foliation on $X$ with $K\mathcal F=0$  is given by a closed $1$-form without codimension one zeros and with polar divisor linearly equivalent
to $2H$. The result follows Lemma \ref{L:CLN}.
Notice that the dimension of $H^0(X,\mathcal O_X(H))$ is equal to $H^3 + 2$,  \cite[Chapter V, Exercise 1.12.6]{kollar}.

Suppose now that $H^3=5$, i.e., $X=X_5$. Lemma \ref{L:X5} implies that every algebraic action of $\mathbb C$ or $\mathbb C^*$ has isolated
fixed points. Theorem \ref{T:rr} tells us that a foliation on $X_5$ with trivial canonical bundle is either induced
by an algebraic action of two dimensional Lie group or is given by a  closed $1$-form without codimension one zeros and with polar divisor linearly equivalent
to $2H$. The Lie algebra of regular vector fields on $X_5$ is isomorphic to $\mathfrak{sl}(2,\mathbb C)$ (Lemma \ref{L:X5}) and
the two dimension subalgebras are all $PSL(2,\mathbb C)$-conjugated, and  isomorphic to the affine Lie algebra
$\mathfrak{aff}(\mathbb C)$. Let $\mathcal F$ be a foliation of $X_5$ determined by any of the affine Lie algebras contained into
$\mathfrak{sl}(2,\mathbb C)$.  The induced action of $\Aff(\mathbb C)\subset \Aut(X)$ on $\mathbb PH^0(X, \Omega^1_X(H))$ has
only one fixed point, therefore $\Aff(\mathbb C)$ is tangent to only one hyperplane section of $X_5 \subset \mathbb P^6$.
It follows that $\mathcal F$ is not defined by a closed $1$-form without codimension one zeros since in this case the action would have to
preserve a pencil of hyperplane sections. As there is a smooth $\mathbb P^1$ of affine Lie subalgebras of $\mathfrak{sl}(2,\mathbb C)$
we conclude that  the space of foliations on $X_5$ with $K\mathcal F=0$ has two disjoint irreducible components: one corresponding
to foliations defined  by closed $1$-forms and the other defined by affine subalgebras of $\mathfrak{aut}(X_5)$. Notice that they are
both smooth, with the second one corresponding to a closed orbit of $\Aut(X_5)$ in $\mathbb P H^0(X_5,\Omega^1_{X_5}(2H))$.
\qed

\section{Foliations on Fano $3$-folds of index one}\label{S:i1}
Most of the work for the classification of foliations with $K\F=0$ on Fano $3$-folds with $\Pic(X)= \Z$ and of index one has
already been done by Jahnke and Radloff in \cite{JR}.  In \cite[Proposition 1.1]{JR} it is
proved that $h^0(X, \Omega^1_X(1)) \neq 0$ implies that the genus of $X$, which by definition is $g(X) = h^0(X,-KX) +2 = \frac{1}{2}KX^3 + 1$, is $10$ or $12$. This considerably reduces the amount of work to
prove the final bit in the classification of foliations with $K\F=0$ on Fano $3$-folds with rank one Picard group.

\begin{thm}\label{T:i1}
If $\mathcal F$ is a codimension one foliation with trivial canonical bundle  on a Fano $3$-fold with $\Pic(X)=\mathbb Z$ and $i(X)=1$ then
$X$ is the Mukai-Umemura $3$-fold and $\mathcal F$ is induced by an algebraic action  of the affine group.
\end{thm}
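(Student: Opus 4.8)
I would run the argument used for $X_5$ in the proof of Theorem~\ref{T:i2}, the index being one forcing a particularly rigid outcome. Since $K\mathcal F=0$ and $-KX=H$, the adjunction formula $KX=K\mathcal F\otimes\det N^{*}\mathcal F$ gives $N\mathcal F=\mathcal O_X(1)$; in particular $\mathcal F$ is cut out by a nonzero section of $\Omega^{1}_X(1)$, so $h^{0}(X,\Omega^{1}_X(1))\neq 0$, and \cite[Proposition~1.1]{JR} forces the genus $g(X)$ to be $10$ or $12$, i.e.\ $X$ belongs to the family $V_{18}$ or $V_{22}$. Now I would apply Theorem~\ref{T:rr}: as $i(X)=1$, $X$ is not $\mathbb P^{3}$, so $\mathcal F$ is semi-stable and one of the alternatives (1)--(3) of that theorem holds. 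Alternative~(3) is impossible here: if $\mathcal F$ were defined by a closed rational $1$-form without divisorial components in its zero set, then Lemma~\ref{L:CLN} would exhibit $\mathcal F$ as a member of some $\Log(d_{1},\dots,d_{k})$ with $\sum d_{i}=\deg N\mathcal F=1$, whereas $\Log(d_{1},\dots,d_{k})$ requires $k\ge 2$ and each $d_{i}\ge 1$. Hence $\mathcal F$ falls under alternative~(1) or~(2), and in either case $X$ carries a nonzero global vector field, so $h^{0}(X,TX)\neq 0$.

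It then remains to identify $X$ inside $V_{18}\cup V_{22}$ using the extra constraint $h^{0}(X,TX)\neq 0$ together with the very existence of $\mathcal F$. This is where the work of Jahnke and Radloff enters: among the genus~$10$ and genus~$12$ Fano $3$-folds with $\Pic(X)=\mathbb Z$, the only one on which a section of $\Omega^{1}_X(1)$ defines a codimension one foliation with zero set of codimension $\ge 2$ is the Mukai--Umemura threefold $X^{\mathrm{MU}}$ --- on each of the remaining (finitely many) members with a nonzero vector field, a direct analysis of $H^{0}(X,\Omega^{1}_X(1))$ (essentially carried out in \cite{JR}) shows that every section has a divisorial zero, so the associated foliation has $K\mathcal F\neq 0$. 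Granting this, $X=X^{\mathrm{MU}}$, whose automorphism group has identity component $PSL(2,\mathbb C)$ and Lie algebra $\mathfrak{aut}(X^{\mathrm{MU}})\cong\mathfrak{sl}(2,\mathbb C)$.

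Finally, on $X=X^{\mathrm{MU}}$ I would conclude exactly as in the last part of the proof of Theorem~\ref{T:i2}. As for $X_5$ in Lemma~\ref{L:X5}, every nonzero vector field on $X^{\mathrm{MU}}$ has isolated zeros --- an element of $\mathfrak{sl}(2,\mathbb C)$ is semisimple or nilpotent, and in either case the induced $\mathbb C$- or $\mathbb C^{*}$-action on the quasi-homogeneous threefold $X^{\mathrm{MU}}$ has only finitely many fixed points --- so alternative~(2) of Theorem~\ref{T:rr} cannot occur and $\mathcal F$ falls under alternative~(1). Thus $T\mathcal F=\mathcal O_X\oplus\mathcal O_X$ is generated by a two-dimensional subalgebra $\mathfrak g\subset\mathfrak{sl}(2,\mathbb C)$; since $\mathfrak{sl}(2,\mathbb C)$ has no two-dimensional abelian subalgebra, $\mathfrak g$ is $PSL(2,\mathbb C)$-conjugate to the affine algebra $\mathfrak{aff}(\mathbb C)$, and $\mathcal F$ is the orbit foliation of the corresponding copy of $\Aff(\mathbb C)\subset\Aut(X^{\mathrm{MU}})$, which indeed has $K\mathcal F=(\wedge^{2}T^{*}\mathcal F)^{**}=\mathcal O_X$. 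Conversely all such foliations have trivial canonical bundle, and the $\mathbb P^{1}$ of affine subalgebras of $\mathfrak{sl}(2,\mathbb C)$ --- a single $\Aut(X^{\mathrm{MU}})$-orbit in $\mathbb P H^{0}(X^{\mathrm{MU}},\Omega^{1}_{X^{\mathrm{MU}}}(1))$ --- accounts for the one-dimensional irreducible component $\Aff$ of Table~\ref{Table:1}. I expect the main obstacle to be the middle step: using \cite{JR} to list precisely the genus~$10$ and~$12$ Fano threefolds with a nonzero vector field and to verify that, apart from $X^{\mathrm{MU}}$, none of them carries a codimension one foliation with trivial canonical bundle.
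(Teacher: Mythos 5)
Your overall skeleton matches the paper's: reduce to genus $10$ or $12$ via \cite[Proposition 1.1]{JR}, invoke Theorem \ref{T:rr}, rule out the closed-form alternative because $\deg N\mathcal F=1$ cannot be written as $\sum d_i$ with $k\ge 2$, and finish on the Mukai--Umemura $3$-fold with the affine subalgebras of $\mathfrak{sl}(2,\mathbb C)$. But the step you yourself flag as "the main obstacle" is a genuine gap, and it is not filled the way you suggest. You propose to identify $X$ by arguing that on every non-Mukai--Umemura member of $V_{18}\cup V_{22}$ carrying a vector field, every section of $\Omega^1_X(1)$ has a divisorial zero, attributing this to a "direct analysis essentially carried out in \cite{JR}." No such analysis is available there (indeed the paper's closing Remark points out that the relevant claims in \cite{JR} about integrable sections contain an error), and you give no argument for it. The paper instead uses Prokhorov's classification \cite{Prokhorov} of Fano $3$-folds of index one and $g\ge 7$ admitting vector fields: besides the Mukai--Umemura $3$-fold there are only one rigid example with $\Aut^0(X)=(\mathbb C,+)$ and a one-parameter family with $\Aut^0(X)=(\mathbb C^*,\cdot)$; all of these are obtained from $X_5$ by a birational transformation given by $|3H-2Y|$, so by Lemma \ref{L:X5} their vector fields have isolated zeros. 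This simultaneously kills alternative (2) of Theorem \ref{T:rr} and, since alternative (1) forces $h^0(X,TX)\ge 2$, leaves only the Mukai--Umemura $3$-fold.

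A second, smaller defect: your justification that every nonzero vector field on $X^{\mathrm{MU}}$ has isolated zeros --- "an element of $\mathfrak{sl}(2,\mathbb C)$ is semisimple or nilpotent, and the induced action on the quasi-homogeneous threefold has only finitely many fixed points" --- does not follow from quasi-homogeneity alone. The quadric $Q^3$ is also a quasi-homogeneous $PSL(2,\mathbb C)$-variety, yet it carries $\mathbb C$- and $\mathbb C^*$-actions with non-isolated fixed points (this is exactly what Propositions \ref{P:QC*} and \ref{P:QC+} have to contend with). The correct source of the isolated-zero property is again the birational relation to $X_5$ together with Lemma \ref{L:X5}. With these two points repaired, the rest of your argument (no two-dimensional abelian subalgebra of $\mathfrak{sl}(2,\mathbb C)$, hence $\mathfrak g\cong\mathfrak{aff}(\mathbb C)$ up to conjugacy) goes through as in the proof of Theorem \ref{T:i2}.
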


Recall that the Mukai-Umemura $3$-fold is the quasi-homogeneous $3$-fold obtained
by the closure of the $\Aut(\P^1)$-orbit  of the point of in $\Sym^{12}\P^1$ determined by the polynomial  $xy(x^{10} + 11x^5y^5 + y^{10})$.
It is an equivariant compactification of the quotient of $\Aut(\P^1)$ by the icosahedral group $A_5$, \cite{MukaiUmemura}.

\begin{proof}
In \cite{Prokhorov} the Fano $3$-folds of index one and $g\ge 7$ carrying vector fields are classified. There are two
rigid examples (Mukai-Umemura $3$-fold with $\Aut^0(X)= \mathbb P SL(2,\mathbb C)$ and a $3$-fold with $\Aut^0(X)=(\mathbb C,+)$)
and a one parameter family of examples with $\Aut^0(X)= (\mathbb C^*,\cdot)$. All the  cases  can be obtained from $X_5$, the Fano $3$-fold of index two
and degree $5$, by means of a birational transformation defined by a linear system on $X_5$ of the form $|3 H - 2Y|$ where $Y$ is the closure
of a $(\mathbb C,+)$ or $(\mathbb C^*,\cdot)$-orbit in $X_5$. Thus Lemma \ref{L:X5} implies that the vector fields in $X$ have, exactly as the vector fields
in $X_5$, isolated fixed points.

Theorem \ref{T:rr} implies that any codimension one foliation on $X$ with $K\mathcal F=0$ must be induced by an algebraic group.
It follows that $X$ is the Mukai-Umemura $3$-fold and that $\mathcal F$ is induced by an action of the affine group.
\end{proof}

\begin{remark}
In the main result of \cite{JR}  there is an imprecision. They claim that a general section of $H^0(X,\Omega^1_X(1))$ for a general deformation of the Mukai-Umemura $3$-fold is integrable. This cannot happen since $h^0(X,\Omega^1_X(1))=3$ for any sufficiently small  deformation of the Mukai-Umemura $3$-fold (\cite[Proposition 2.6]{JR}) and therefore the closedness of Frobenius integrability condition  would imply  that every element of $H^0(X,\Omega^1_X(1))\simeq (\mathfrak{sl}_2)^*$ is integrable. Apparently, their mistake is at the proof of their Proposition 2.16. More specifically, at the determination of the integer $a$ from the exact sequence
$0 \to \mathcal O_{\P^1} \to \mathcal O_{\P^1}(2) \oplus \mathcal O_{\P^1} \oplus \mathcal O_{\P^1}(-1) \to
\mathcal O_{\P^1}(-a+1) \oplus \tau \to 0$, where $\tau$ is a torsion sheaf.
\end{remark}

\section{Holomorphic Poisson structures}\label{S:Poisson} A (non-trivial) holomorphic Poisson structure on projective manifold $X$ is an element of
$[\Pi] \in \P H^0(X, \bigwedge^2 TX)$ such that $[ \Pi,\Pi]=0$, where $[\cdot,\cdot]$ is the Schouten bracket, see \cite{Polishchuk}. In dimension three,
a Poisson structure is equivalent to a pair $(\F,D)$ where $\F$ is a codimension one foliation with $K\F = \mathcal O_X(-D)$
and an effective divisor $D$. Our classifications of irreducible components of the space of foliations with $K\F$ very negative (Proposition \ref{P:verynegative})
and with $K \F =0$  on Fano $3$-folds with  rank one Picard group implies at once a description of the irreducible components of the space of Poisson structures
\[
\mathrm{Poisson}(X)  = \left\{ \Pi \in \P H^0(X, \bigwedge^2 TX) \, \Big| \,  [\Pi,\Pi]=0 \right\} \,
\]
on these manifolds.

\begin{thm}\label{T:Poisson}
If $X$ is a Fano $3$-fold with rank one Picard group then $\mathrm{Poisson}(X)$ has
$6$ irreducible components when $X=\P^3$;
$3$ irreducible components when $X=Q^3$;
 $2$ irreducible components when $X=X_5$;
  $1$ irreducible component when $X$ has index two and is distinct from $X_5$;
 $1$ irreducible component when $X$ is the Mukai-Umemura $3$-fold;
and is empty when $X$ has index one and is not the Mukai-Umemura $3$-fold.
\end{thm}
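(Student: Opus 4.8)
The plan is to deduce Theorem~\ref{T:Poisson} directly from the dictionary between Poisson structures and pairs $(\mathcal F, D)$, together with the already-established classification of foliations with $K\mathcal F = 0$ (Table~\ref{Table:1}, via Theorems~\ref{T:Quadricas}, \ref{T:CLN}, \ref{T:i2}, \ref{T:i1}) and of foliations with $K\mathcal F$ very negative (Proposition~\ref{P:verynegative}). First I would recall and make precise the correspondence in dimension three: a nonzero $\Pi \in H^0(X, \wedge^2 TX)$ is automatically decomposable (a bivector on a $3$-fold is locally a wedge of two vector fields), so it determines a codimension one distribution; the condition $[\Pi,\Pi]=0$ is exactly integrability, giving a foliation $\mathcal F$; and the zero divisor of $\Pi$ (the divisorial part of $\mathrm{sing}(\Pi)$) is an effective divisor $D$ with $\wedge^2 TX = \wedge^2 T\mathcal F \otimes \mathcal O_X(D)$, i.e.\ $-KX + K\mathcal F = D$, hence $K\mathcal F = KX + D = \mathcal O_X(-(-KX - D))$. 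Conversely a pair $(\mathcal F, D)$ with $D$ effective and $-KX \otimes \mathcal O_X(-D) = N\mathcal F^{\ast}$... more cleanly: $K\mathcal F \otimes \mathcal O_X(D) = KX$ recovers $\Pi$ up to scalar. So $\mathbb P H^0(X,\wedge^2 TX)$ fibers over the union of $\mathrm{Fol}(X, N)$ for $N$ ranging over $-KX \otimes \mathcal O_X(-D)$ with $D \ge 0$, and the fiber over $[\mathcal F]$ is $\mathbb P H^0(X, \mathcal O_X(D))$ where $\mathcal O_X(D) = KX \otimes K\mathcal F^{\ast}$.

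Next I would restrict to $X$ Fano with $\mathrm{Pic}(X) = \mathbb Z \cdot H$ and write $-KX = i(X) H$, $K\mathcal F = c H$ with $c \le 0$ (since $D = (i(X) + c)H \ge 0$ forces $-i(X) \le c \le 0$). The only values of $c$ that occur are: $c = 0$ (foliations with trivial canonical bundle) and $c$ with $K\mathcal F$ negative. Among the latter, Proposition~\ref{P:verynegative} handles $K\mathcal F = (2 - n)H = -H$ (del Pezzo foliations), and I must check there is nothing in the intermediate range $-i(X) < c < 0$, $c \ne 2 - n = -1$. For a $3$-fold $-1 < c < 0$ is empty of integers, so the cases to assemble are precisely $c = 0$ and $c = -1$ (and the latter occurs only when $i(X) \ge 2$, so for $X = \mathbb P^3$, $Q^3$, and the index-two $3$-folds, but with $D = (i(X)-1)H$). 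For $c = 0$ each irreducible component of $\mathrm{Fol}(X, -KX)$ with $D = -KX$ contributes one component of $\mathrm{Poisson}(X)$, fibered over $\mathbb P H^0(X, -KX) \ne 0$ (Fano), so the count is the same as Table~\ref{Table:1}: $6$ for $\mathbb P^3$, $3$ for $Q^3$, $2$ for $X_5$, $1$ for the other index-two $3$-folds, $1$ for Mukai--Umemura, $0$ for the remaining index-one $3$-folds.

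Then I would check that the del Pezzo case $c = -1$ never adds a \emph{new} irreducible component. By Proposition~\ref{P:verynegative}, a foliation with $K\mathcal F = -H$ exists only on $\mathbb P^3$ (pencils of quadrics, one member nonreduced) and on $Q^n$ (pencils of hyperplane sections). On $\mathbb P^3$ such a foliation is a specialization of a generic pencil of quadrics, hence lies in the closure of $\mathrm{Rat}(2,2)$, and the corresponding Poisson structures (with $D = H$, $\mathbb P H^0(\mathbb P^3, \mathcal O(1))$ three-dimensional) land in the closure of the component coming from $\mathrm{Rat}(2,2)$ with $D = -KX = \mathcal O(4)$; similarly on $Q^3$ a pencil of hyperplane sections is a degeneration inside $\mathrm{Rat}(2,1)$, and with $D = H$ one gets something in the closure of the $\mathrm{Rat}(2,1)$-component with $D = \mathcal O(3)$. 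For the index-two $3$-folds one would need to know there are no del Pezzo foliations at all; Proposition~\ref{P:verynegative} gives this immediately since those $X$ are neither $\mathbb P^3$ nor a quadric. This containment-in-the-closure step — showing the $c=-1$ locus does not form a separate component — is the one genuine point requiring care; it follows from the explicit $\mathrm{Rat}(d_1,d_2)$ parametrization in Section~\ref{S:parametros}, where allowing one factor $f_i$ to acquire a multiple component produces exactly these del Pezzo foliations as boundary points. Once this is in place, the count of irreducible components of $\mathrm{Poisson}(X)$ equals the count in Table~\ref{Table:1}, which is the statement of the theorem.

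The main obstacle I anticipate is precisely the last step: confirming that the divisorial-zero locus $D$ being smaller than $-KX$ only yields components already accounted for, rather than genuinely new families of Poisson structures. For the components $\mathrm{Rat}$, $\mathrm{Log}$, $\mathrm{LPB}$, $\mathrm{Aff}$ this is a matter of tracking how the divisor $D$ varies in each family and observing that the $c=0$ strata (where $D = -KX$) are the ones of maximal dimension inside $\mathbb P H^0(X, \wedge^2 TX)$, so every irreducible component of the Poisson variety meets the $c=0$ locus; the $c=-1$ del Pezzo foliations, which exist only on $\mathbb P^3$ and $Q^3$, sit in the closure of the rational components as explained. Everything else is bookkeeping: read off the six/three/two/one/one/zero counts from Table~\ref{Table:1} and note $H^0(X,-KX) \ne 0$ because $X$ is Fano.
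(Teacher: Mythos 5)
Your overall strategy --- stratify $\mathbb P H^0(X,\bigwedge^2 TX)$ by the divisorial part $D$ of the zero locus, identify the $D=0$ stratum with the classified space of foliations with $K\mathcal F=0$, and show every stratum with $D>0$ lies in the closure of the $D=0$ locus --- is exactly the paper's. But the execution has concrete gaps, all concentrated on $\mathbb P^3$. First, your enumeration of the possible values of $c=\deg K\mathcal F$ is incomplete: the constraint is $-i(X)\le c\le 0$ together with the nonexistence of foliations with too-small normal bundle, not ``$-1<c<0$ is empty of integers'' (that is the wrong interval). On $\mathbb P^3$ the value $c=-2$ does occur: $D=2H$ and $N\mathcal F=\mathcal O_{\mathbb P^3}(2)$ correspond to degree-zero foliations (pencils of hyperplanes) multiplied by a quadric, and this stratum is nonempty. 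The paper's proof devotes its one displayed formula precisely to deforming $QH_1H_2\left(\frac{dH_1}{H_1}-\frac{dH_2}{H_2}\right)$ into $\Log(1,1,2)$; without handling this stratum the count of $6$ components for $\mathbb P^3$ is not established. (On $Q^3$ you would likewise need to rule out $c=-2$, i.e.\ $h^0(Q^3,\Omega^1_{Q^3}(1))=0$; true but unargued.)

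Second, for the $c=-1$ stratum on $\mathbb P^3$ you misread Proposition \ref{P:verynegative}: it says such a foliation is a foliation of degree one on $\mathbb P^3$, and degree-one foliations form \emph{two} irreducible components, $\Rat(1,2)$ and $\Log(1,1,1)$; only the first consists of pencils of quadrics with a nonreduced member (that description covers only the semi-stable case). The $\Log(1,1,1)$ part times a linear form needs its own deformation (the paper sends it into $\Log(1,1,1,1)$). Third, your mechanism for placing $L\cdot\Rat(1,2)$ inside $\overline{\Rat(2,2)}$ --- letting one quadric of the pencil acquire a multiple component --- only produces the sub-stratum where $L$ coincides with the doubled hyperplane: the limit of $g_2\,dg_1-g_1\,dg_2$ as $g_1\to f_1^2$ is $f_1\cdot\bigl(2f_2\,df_1-f_1\,df_2\bigr)$, i.e.\ $f_1\cdot\omega$, never $L\cdot\omega$ for a general linear form $L$. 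The correct move, as in the paper, is to perturb the residues inside $\Log(1,1,2)$ so that the residue along $L$ tends to zero. All of these gaps are fixable along the lines just indicated, but as written the proposal does not prove the theorem for $X=\mathbb P^3$; the remaining cases ($Q^3$ modulo the $c=-2$ check, the index-two and index-one threefolds) are handled correctly.
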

\begin{proof}
It suffices to verify that every Poisson structure with divisorial zero locus can be deformed to a Poisson structure with codimension two singular set.
A foliation of degree $0$ on $\mathbb P^3$ can be defined by a closed logarithmic $1$-form without divisorial zeros and with poles on two hyperplanes, i.e.
$\Rat(1,1)=\Log(1,1)$. If we multiply a $1$-form with coefficients in $\mathcal O_{\mathbb P^3}(2)$ defining a degree zero foliation by a section of $\mathcal O_{\mathbb P^3}(2)$,
we can deform it to a $1$-form with corresponding belonging  to $\Log(1,1,2)$, as shown below
\[
\lim_{\varepsilon \to 0} Q H_1 H_2 \left( (1+ 2\varepsilon)  \frac{dH_1}{H_1} - \frac{dH_2}{H_2}  - \varepsilon \frac{dQ}{Q}\right) = Q H_1 H_2 \left(  \frac{dH_1}{H_1} - \frac{dH_2}{H_2} \right).
\]
Similar arguments show that we can deform the product of a linear form by a elements of $\Rat(1,2)$ and $\Log(1,1,1)$ (the irreducible components of the space of
degree $1$ foliations on $\mathbb P^3$) to foliations in $\Log(1,1,2)$ and $\Log(1,1,1,1)$ respectively; and that we can deform the product of an integrable   section of $\Omega^1_{Q^3}(2)$ (thus an element of $\Rat(1,1)$ in $Q^3$) by a section of $\mathcal O_{Q^3}(1)$ to an element of $\Log(1,1,1)$.
\end{proof}

\bibliographystyle{amsplain}

\end{document}